\newtheorem{theorem}{Theorem}[section]
\newtheorem{proposition}[theorem]{Proposition}
\theoremstyle{definition}
\newtheorem{definition}[theorem]{Definition}
\theoremstyle{remark}
\newtheorem{remark}[theorem]{Remark}
\numberwithin{equation}{section}
\begin{document}

\title{Variation of Hodge structure for generalized complex manifolds}
\author{David Baraglia}

% Address of record for the research reported here
\address{Mathematical sciences institute, The Australian National University, Canberra ACT 0200, Australia}

% Current address
%\curraddr{Mathematical sciences institute, The Australian National University, Canberra ACT 0200, Australia}

\email{david.baraglia@anu.edu.au}

% \thanks will become a 1st page footnote.
\thanks{This work is supported by the Australian Research Council Discovery Project DP110103745.}

%\subjclass[2010]{Primary 53C08, 19L50; Secondary 53D18, 53C80}

% You can replace \today by manually entering the date (also you can manually enter to put date in format Day Month Year)
\date{\today}

%\dedicatory{This paper is dedicated to our advisors.}
% \keywords{Differential geometry, algebraic geometry}

%%%%%%%%%%%%%%%%%%%%%%%%%%%%%%%%%%%%%%%%%%%%%%%%%%%%%%%%%%%%%%%%%%%%%%%%%%%%%%%%%%%%%%%%%%%%%%%%%%%%%
\begin{abstract}
A generalized complex manifold which satisfies the $\partial \overline{\partial}$-lemma admits a Hodge decomposition in twisted cohomology. Using a Courant algebroid theoretic approach we study the behavior of the Hodge decomposition in smooth and holomorphic families of generalized complex manifolds. In particular we define period maps, prove a Griffiths transversality theorem and show that for holomorphic families the period maps are holomorphic. Further results on the Hodge decomposition for various special cases including the generalized K\"ahler case are obtained.
\end{abstract}
%%%%%%%%%%%%%%%%%%%%%%%%%%%%%%%%%%%%%%%%%%%%%%%%%%%%%%%%%%%%%%%%%%%%%%%%%%%%%%%%%%%%%%%%%%%%%%%%%%%%%

\maketitle

%%%%%%%%%%%%%%%%%%%%%%%%%%%%%%%%%%%%%%%%%%%%%%%%%%%%%%%%%%%%%%%%%%%%%%%%%%%%%%%%%%%%%%%%%%%%%%%%%%%%%
%%%%%%%%%%%%%%%%%%%%%%%%%%%%%%%%%%%%%%%%%%%%%%%%%%%%%%%%%%%%%%%%%%%%%%%%%%%%%%%%%%%%%%%%%%%%%%%%%%%%%
%%%%%%%%%%%%%%%%%%%%%%%%%%%%%%%%%%%%%%%%%%%%%%%%%%%%%%%%%%%%%%%%%%%%%%%%%%%%%%%%%%%%%%%%%%%%%%%%%%%%%
%%%%%%%%%%%%%%%%%%%%%%%%%%%%%%%%%%%%%%%%%%%%%%%%%%%%%%%%%%%%%%%%%%%%%%%%%%%%%%%%%%%%%%%%%%%%%%%%%%%%%

\section{Introduction}

Generalized complex structures, introduced by Hitchin \cite{hit} and further developed by Gualtieri \cite{gual} and Cavalcanti \cite{cav} are a hybrid of complex and symplectic geometry. They have gained popularity for their connections to string theory compactifications, supersymmetry and mirror symmetry. Aside from the unification of complex and symplectic geometry there are other features which point towards a deep connection between mirror symmetry and generalized complex geometry.

On a generalized complex manifold $M$ there is an analogue of the $\overline{\partial}$-operator for complex manifolds, which we also denote by $\overline{\partial}$. This gives rise to cohomology groups $H^k_{\overline{\partial}}(M)$ which are to be thought of as a kind of Hochschild homology since in the case that $M$ is an ordinary complex manifold one has $H^k_{\overline{\partial}}(M) = \bigoplus_{q-p=k} H^{p,q}(M)$. In certain cases the $\overline{\partial}$-cohomology gives rise to a decomposition of the cohomology of $M$, or in the presence of a non-trivial twisting $3$-form $H$, a decomposition in twisted cohomology (the relevant definitions are recalled in the paper)
\begin{equation}\label{hdc}
H^*(M,H) \otimes \mathbb{C} = \bigoplus_{k=-n}^n H^k_{\overline{\partial}}(M).
\end{equation}
More generally $H^*(M,H)\otimes \mathbb{C}$ is related to the $H^*_{\overline{\partial}}(M)$ groups through a spectral sequence. The decomposition (\ref{hdc}) occurs precisely when $M$ satisfies the generalized complex version of the $\partial \overline{\partial}$-lemma. In this case (\ref{hdc}) is a Hodge structure on twisted cohomology. This paper concerns aspects of these Hodge structures and in particular their behavior with respect to families of generalized complex structures.\\

Recall that the classical picture of mirror symmetry involves an isomorphism between A-model correlation functions on one side and B-model correlation functions on the mirror. The A-model correlation functions are related to quantum cohomology while the B-model is related to variation of Hodge structure. To explain the B-model further let $M$ be a compact Calabi-Yau manifold. We may define the Hochschild cohomology $HH^*(M)$ and Hochschild homology $HH_*(M)$ of $M$ by \cite{cal2}
\begin{align*}
HH^k(M) &= \bigoplus_{q+p=k} H^q( \wedge^p T^{1,0}M ) \\
HH_k(M) &= \bigoplus_{q-p = k} H^q( \wedge^{p,0} T^*M).
\end{align*}
Observe that $HH^*(M)$ is a graded ring and $HH_*(M)$ is a graded module for $HH^*(M)$. Since $M$ is a Calabi-Yau manifold there is a holomorphic volume form $\Omega$ which induces an isomorphism $\Omega : HH^k(M) \simeq HH_{n-k}(M)$. We can think of either $HH^*(M)$ or $HH_*(M)$ as the space of states in the topological $B$-model \cite{mor} and the correlation functions are essentially given by the module structure $HH^j(M) \otimes HH_k(M) \to HH_{j+k}(M)$. The special case
\begin{equation*}
H^1( T^{1,0}M) \otimes H^q( \wedge^{p,0} T^*M) \to H^{q+1}(\wedge^{p-1,0} T^*M)
\end{equation*}
describes infinitesimal variation of Hodge structure.

Kapustin and Li \cite{kapli} show how a generalized Calabi-Yau manifold gives rise to a topological field theory generalizing the A and B models. The ring $HH^*(M)$ is now given by Lie algebroid cohomology $H^*(L)$, where $L$ is the Lie algebroid associated to the generalized complex structure and $HH_*(M)$ is now given by the $\overline{\partial}$-cohomology $H^*_{\overline{\partial}}(M)$. We will see that the case
\begin{equation*}
HH^2(M) \otimes HH_k(M) \to HH_{k+2}(M)
\end{equation*}
similarly corresponds to infinitesimal variation of the Hodge structure (\ref{hdc}). We find therefore that variation of Hodge structure on a generalized Calabi-Yau manifold is related to the correlation functions in the topological field theory of Kapustin and Li. Speculatively we might conjecture that variations of Hodge structure are related to a quantum cohomology ring on a mirror generalized Calabi-Yau manifold.\\

In order to describe variations of Hodge structure for generalized complex manifolds we first need to define what we mean by a family of generalized complex structures. A simple definition would be to fix a manifold $M$ and closed $3$-form $H$ and consider a collection $J(t)$ of generalized complex structures that depend on a parameter $t \in B$. This definition seems overly restrictive since for instance we would like to allow for families defined over a punctured disc. Using Courant algebroids we find a more natural notion for a family, in fact we find there are two possible definitions which essentially correspond to smooth and holomorphic families. Section \ref{families} introduces the notion of families of exact Courant algebroids and this is used in Section \ref{smhofam} to define smooth families (Definition \ref{smoothfam}) and holomorphic families (Definition \ref{holofam}).\\

A generalized complex manifold $(M,J)$ gives rise to a complex Lie algebroid $L$. In \cite{gual} it was found that infinitesimal deformations of $J$ are described by $H^2(L)$, degree $2$ Lie algebroid cohomology of $L$. We call the class of such a deformation the {\em generalized Kodaira-Spencer class}. In Section \ref{gksc}, we reinterpret generalized Kodaira-Spencer classes in terms of our definition of smooth families.\\

Section \ref{varhodstr} represents the core of the paper. Here we consider how the Hodge decomposition (\ref{hdc}) varies in smooth and holomorphic families of compact generalized complex manifolds. Proposition \ref{ddbstab} shows that the existence of a Hodge decomposition is preserved under all sufficiently small deformations. For a family $X \to B$ of generalized complex structures, we find that the twisted cohomology of the fibers determines a vector bundle with flat connection, an analogue of the Gauss-Manin connection (Section \ref{twgmconn}). If a fiber $M_0 = \pi^{-1}(0)$ satisfies a Hodge decomposition (\ref{hdc}) then by Proposition \ref{ddbstab}, so do all sufficiently close fibers and the $\overline{\partial}$-cohomology groups $H^k_{\overline{\partial}}(M_t)$ define smooth subbundles of the bundle of twisted cohomology. As with variations of Hodge structures in complex geometry it is easier to work with the associated Hodge filtrations, which take the form:
\begin{equation*}
F^p H(M_t) = \dots \oplus H^{p-4}_{\overline{\partial}}(M_t) \oplus H^{p-2}_{\overline{\partial}}(M_t) \oplus H^p_{\overline{\partial}}(M_t).
\end{equation*}
Using these filtrations we may define periods maps (over a simply connected subset of the base):
\begin{equation*}
\mathcal{P}^{p} : B \to {\rm Grass}( f^p , H^{p+n+\tau}(M_0,H_0)_\mathbb{C} ),
\end{equation*}
where $f^p = {\rm dim}_{\mathbb{C}} F^p H(M_t)$, $M_0$ is $2n$-dimensional, $\tau$ is the parity (Section \ref{gcg}) and ${\rm Grass}(r,V)$ is the Grassmannian of complex $r$-dimensional subspaces of $V$. Proposition \ref{grtrans} shows that an analogue of Griffiths transversality holds: if $\nabla$ is the flat connection on the bundle of twisted cohomology groups and $e$ a vector field on the base, then $\nabla_e ( F^p H(M_t) ) \subseteq F^{p+2} H(M_t)$. The induced map
\begin{equation*}
F^p H(M_t) / F^{p-2} H(M_t) \to F^{p+2} H(M_t) / F^p H(M_t)
\end{equation*}
is shown to be the map $H^p_{\overline{\partial}}(M_t) \to H^{p+2}_{\overline{\partial}}(M_t)$ given by the Clifford action of the generalized Kodaira-Spencer class corresponding to the deformation in the direction $e$. In Proposition \ref{pmholo}, we use this to show that for holomorphic families of generalized complex manifolds the period maps are holomorphic.\\

In Section \ref{speccas}, we consider the Hodge decomposition and variations of Hodge structure in some special cases. For a symplectic manifold $(M,\omega)$, the existence of the Hodge decomposition is equivalent to the strong Lefschetz property. Section \ref{symplectic} studies this case and we show that the Hodge decomposition is completely determined by the action of $\omega$ on the cohomology of $M$. At the other extreme we consider generalized complex manifolds of complex type in Section \ref{cpxtype}. Note that this consists of a complex manifold $(M,I)$ together with a real closed $3$-form $H$ of type $(1,2) + (2,1)$. The Hodge decomposition is not the usual one for complex manifold, but rather a twisted version which incorporates $H$. We recall that there is a natural filtration $\{ W^j \}$ on twisted cohomology induced differential form degree. We show in Proposition \ref{mhs} that if the Hodge decomposition (\ref{hdc}) holds, then the Hodge structure together with the filtration $\{ W^i \}$ forms a mixed Hodge structure. This means that the Hodge decomposition induces Hodge decompositions on the quotients $W^j/W^{j+1}$. In the case $H=0$ we have $W^j/W^{j+1} = H^j(M,\mathbb{C})$ and we recover the usual Hodge decomposition for $M$. In Section \ref{gcym} we consider the case of generalized Calabi-Yau manifolds. For a compact generalized Calabi-Yau manifold which satisfies the $\partial \overline{\partial}$-lemma a result of Goto \cite{got} shows that all infinitesimal deformations are unobstructed and there is a smooth local moduli space $\mathcal{M}$ of generalized Calabi-Yau structures. As an application of the results on variation of Hodge structure, we show in Proposition \ref{pmimm} that the period map $\mathcal{P}^{-n} : \tilde{\mathcal{M}} \to \mathbb{P}( H^\tau(M , H)_\mathbb{C} )$ which sends a pure spinor $\rho$ to span of the twisted cohomology class $[\rho]$ is an immersion. Here $\tilde{\mathcal{M}}$ is a cover of $\mathcal{M}$ taken so that the period map is single-valued.\\

Finally in Section \ref{vhsgkm} we consider how the Hodge decomposition behaves for families of generalized K\"ahler manifolds. A generalized K\"ahler manifold $M$ has two commuting generalized complex structures $J_1,J_2$ and the $\partial \overline{\partial}$-lemma holds for both \cite{gualhod}, so there are two decompositions $\{ H^k_{\overline{\partial}_1}(M) \}$, $\{ H^k_{\overline{\partial}_2}(M)\}$ in twisted cohomology. Moreover, the two decompositions are compatible in that there is a decomposition
\begin{equation*}
H^*(M,H) \otimes \mathbb{C} = \bigoplus_{j,k} H^{j,k}_{\overline{\delta}_+}(M).
\end{equation*}
Here $\overline{\delta}_+$ is a certain differential on the space of forms on $M$ defined in Section \ref{gkg} and as subspaces of $H^*(M,H) \otimes \mathbb{C}$ we have $H^{j,k}_{\overline{\delta}_+}(M) = H^j_{\overline{\partial}_1}(M) \cap H^k_{\overline{\partial}_2}(M)$. The way this decomposition can vary in families $X \to B$ of generalized K\"ahler manifolds is essentially controlled by the generalized Kodaira spencer classes $\rho_i : TB \to H^2(L_i)$ for $i=1,2$ representing deformations of $J_1,J_2$. The requirement that $J_1,J_2$ commute however imposes compatibility conditions on $\rho_1,\rho_2$ which we determine in Proposition \ref{infdefgk}. On a generalized K\"ahler manifold there are decompositions $L_1 = L_1^+ \oplus L_1^-$, $L_2 = L_1^+ \oplus \overline{L_1^-}$, for certain complex Lie algebroids $L_1^{\pm}$. In Section \ref{lad} we show how such decompositions give the differential complexes $(\wedge^* L_1^* , d_{L_1})$, $(\wedge^* L_2^* , d_{L_2})$ the structure of bigraded differential complexes. In particular there exists natural maps $H^2(L_1) \to H^2(L_1^\pm)$, $H^2(L_2) \to H^2(L_1^+)$ and $H^2(L_2) \to H^2( \overline{L_1^-})$. The compatibility conditions on $\rho_1,\rho_2$ are that they coincide under the maps to $H^2(L_1^+)$ and they are related by conjugation under the maps $H^2(L_1) \to H^2(L_1^-)$, $H^2(L_2) \to H^2(\overline{L_1^-})$. In Section \ref{gkvhs} we interpret these conditions in terms of variation of Hodge structure on generalized K\"ahler manifolds.

%%%%%%%%%%%%%%%%%%%%%%%%%%%%%%%%%%%%%%%%%%%%%%%%%%%%%%%%%%%%%%%%%%%%%%%%%%%%%%%%%%%%%%%%%%%%%%%%%%%%%
%%%%%%%%%%%%%%%%%%%%%%%%%%%%%%%%%%%%%%%%%%%%%%%%%%%%%%%%%%%%%%%%%%%%%%%%%%%%%%%%%%%%%%%%%%%%%%%%%%%%%
%%%%%%%%%%%%%%%%%%%%%%%%%%%%%%%%%%%%%%%%%%%%%%%%%%%%%%%%%%%%%%%%%%%%%%%%%%%%%%%%%%%%%%%%%%%%%%%%%%%%%
%%%%%%%%%%%%%%%%%%%%%%%%%%%%%%%%%%%%%%%%%%%%%%%%%%%%%%%%%%%%%%%%%%%%%%%%%%%%%%%%%%%%%%%%%%%%%%%%%%%%%

\section{Smooth families in generalized geometry}\label{families}

%%%%%%%%%%%%%%%%%%%%%%%%%%%%%%%%%%%%%%%%%%%%%%%%%%%%%%%%%%%%%%%%%%%%%%%%%%%%%%%%%%%%%%%%%%%%%%%%%%%%%
%%%%%%%%%%%%%%%%%%%%%%%%%%%%%%%%%%%%%%%%%%%%%%%%%%%%%%%%%%%%%%%%%%%%%%%%%%%%%%%%%%%%%%%%%%%%%%%%%%%%%

\subsection{Courant algebroids and generalized geometry}

One may define various geometric structures on a smooth manifold in terms of structure on the tangent bundle. Generalized geometry works similarly except the tangent bundle is replaced with an exact Courant algebroid. In this section we review Courant algebroids and develop a suitable notion of smooth families in generalized geometry.\\

Courant algebroids were introduced in \cite{lwx} as a generalization of the bracket used by Courant \cite{cour} in the context of Hamiltonian systems. Courant algebroids can be defined in terms of a skew-symmetric bracket, the {\em Courant bracket}, or as we prefer a bracket which is rarely skew-symmetric called the {\em Dorfman bracket}. We follow \cite{roy} in defining Courant algebroids in terms of Dorfman brackets.
\begin{definition}
A {\em Courant algebroid} on a smooth manifold $M$ consists of a vector bundle $E \to M$, $\mathbb{R}$-bilinear bracket $[ \, , \, ] : \Gamma(E) \otimes \Gamma(E) \to \Gamma(E)$ called the {\em Dorfman bracket}, non-degenerate bilinear form $\langle \, , \, \rangle$ and a bundle map $\rho : E \to TM$ called the {\em anchor} such that for all $a,b,c \in \Gamma(E)$ and $f \in \mathcal{C}^\infty(M)$, we have
\begin{itemize}
\item[(C1)]{$[a,[b,c]] = [[a,b],c] + [b,[a,c]]$,}
\item[(C2)]{$\rho[a,b] = [\rho(a),\rho(b)]$,}
\item[(C3)]{$[a,fb] = \rho(a)(f)b + f[a,b]$,}
\item[(C4)]{$[a,b] + [b,a] = d \langle a,b \rangle $,}
\item[(C5)]{$\rho(a) \langle b,c \rangle = \langle [a,b],c \rangle + \langle b,[a,c] \rangle$,}
\end{itemize}
where $[\rho(a),\rho(b)]$ is the Lie bracket of vector fields and $d$ is the operator $d : \mathcal{C}^\infty(M) \to \Gamma(E)$ defined by $\langle df , a \rangle = \frac{1}{2}\rho(a)(f)$.\\

A Courant algebroid $E \to M$ is {\em exact} if the sequence
\begin{equation*}\xymatrix{
0 \ar[r] & T^*M \ar[r]^-{\frac{1}{2}\rho^*} & E \ar[r]^-\rho & TM \ar[r] & 0
}
\end{equation*}
is exact. Here $\rho^*$ is the transpose $T^*M \to E^*$ of $\rho$ followed by the identification of $E$ and $E^*$ using the pairing. 
\end{definition}
Let $E$ be an exact Courant algebroid. A section $s : TM \to E$ of the anchor will be called an {\em isotropic splitting} if $s(TM)$ is isotropic with respect to the pairing on $E$. It is clear that isotropic splittings exist for any exact Courant algebroid $E$ and that on choosing such a splitting we get an isomorphism $E = TM \oplus T^*M$ such that the pairing $\langle \, , \, \rangle$ is given by
\begin{equation*}
\langle X + \xi , Y + \eta \rangle = \frac{1}{2}( \xi(Y) + \eta(X) )
\end{equation*}
and the operator $d : \mathcal{C}^\infty(M) \to \Gamma(E)$ sends a function $f$ to the image of $df$ under the inclusion $T^*M \to E$.\\

Exact Courant algebroids over a manifold $M$ are classified by third degree cohomology with real coefficients, $H^3(M,\mathbb{R})$. Indeed given an exact Courant algebroid $E$ and a choice of isotropic splitting there is a closed $3$-form $H$ such that the Dorfman bracket on $E$ is given by \cite{sw} 
\begin{equation}\label{courb}
[X+\xi ,Y+\eta ]_H = [X,Y] + \mathcal{L}_X \eta - i_Y d \xi + i_X i_Y H.
\end{equation}
A change in isotropic splitting for $E$ changes $H$ by an exact term but the cohomology class $[ H ] \in H^2(M,\mathbb{R})$, called the {\em \v{S}evera class} of $E$ is independent of the splitting. Conversely, for any closed $3$-form $H$ the $H$-twisted Dorfman bracket given in (\ref{courb}) defines an exact Courant algebroid. Given two closed $3$-forms $H,H'$ the associated exact Courant algebroids are isomorphic if and only if $H$ and $H'$ represent the same class in $H^3(M,\mathbb{R})$. Thus exact Courant algebroids on $M$ are classified by $H^3(M,\mathbb{R})$. This classification is due to \v{S}evera \cite{sev}.\\

If $B$ is a $2$-form on $M$ we let $B$ act on $E = TM \oplus T^*M$ according to $B(X+\xi) = i_X B$. We also let $e^B$ denote the exponentiated action $e^B(X+\xi) = X + \xi + i_X B$. Such a transformation is called a $B$-shift. For any $2$-form $B$ we note that $e^B$ preserves the pairing $\langle \, , \, \rangle$ and the anchor in the sense that $\rho \circ e^B = \rho$. If $B$ is closed then we also see that $e^B$ preserves the $H$-twisted Dorfman bracket for any closed $H$. More generally for any $B$ we have an identity $e^B[a,b]_H = [e^Ba,e^Bb]_{H+dB}$.

%%%%%%%%%%%%%%%%%%%%%%%%%%%%%%%%%%%%%%%%%%%%%%%%%%%%%%%%%%%%%%%%%%%%%%%%%%%%%%%%%%%%%%%%%%%%%%%%%%%%%
%%%%%%%%%%%%%%%%%%%%%%%%%%%%%%%%%%%%%%%%%%%%%%%%%%%%%%%%%%%%%%%%%%%%%%%%%%%%%%%%%%%%%%%%%%%%%%%%%%%%%

\subsection{Families of exact Courant algebroids}\label{familiesexact}

Generalized geometry can be described as the study of geometric structures on exact Courant algebroids. Given a geometric structure it is natural to consider its deformations. We have found that the notion of a family of exact Courant algebroids gives a suitable framework in which to study deformation problems in generalized geometry. 

\begin{definition}
Let $\pi : M \to B$ be a locally trivial fiber bundle and $V = {\rm Ker}(\pi_*) \subseteq TM$ the vertical bundle. A Courant algebroid $E \to M$ on is called a {\em family of exact
Courant algebroids} over $B$ if the anchor $\rho : E \to TM$ factors through a map $E \to V$ and the induced sequence
\begin{equation*}
0 \to V^* \to E \to V \to 0
\end{equation*}
is exact.
\end{definition}

We now explain how a family of exact Courant algebroids $E \to M$ as defined above gives rise to an exact Courant algebroid on each fiber. Let $b \in B$ and $M_b = \pi^{-1}(b)$ be the fiber of $\pi$ over $b$. Let $i : M_b \to M$ be the inclusion. The restriction to $M_b$ of the vertical bundle of of $\pi$ is canonically isomorphic to the tangent bundle of $M_b$, so we have an exact sequence
\begin{equation*}
0 \to T^*M_b \to E|_{M_b} \to TM_b \to 0.
\end{equation*}
Let $a,b$ be sections of $E|_{M_b}$ over $M_b$. We can find smooth extensions $\tilde{a},\tilde{b}$ of $a,b$ to sections of $E$ over $M$. Now we define a Dorfman bracket $[ \, , \, ]_{E|_{M_b}}$ by setting $[a,b]_{E|_{M_b}} = ( [\tilde{a},\tilde{b}]_E )|_{M_b}$. To show that this is independent of the choice of smooth extensions we need only show that if $r,s$ are sections of $E$ and $s$ vanishes on $M_b$ then so do $[r,s]_E$ and $[s,r]_E$. First note that since $[r,s]_E + [s,r]_E = \rho^* d \langle r,s \rangle$, it suffices to show that $[r,s]_E$ vanishes on $M_b$. We see that $\rho( [r,s]_E) = [\rho(r),\rho(s)]$ and we know that $\rho(r),\rho(s)$ are vertical vector fields and $\rho(s)$ vanishes on $M_b$, so $[\rho(r),\rho(s)]$ also vanishes on $M_b$. Given any vertical vector field $W$ let $W'$ be a lift of $W$ to a smooth section of $E$. It now suffices to show that $\langle W' , [r,s]_E \rangle$ vanishes on $M_b$ for all such $W'$. But we have $\langle W' , [r,s]_E \rangle = \rho(r) \langle W' , s\rangle - \langle [r,W'] , s\rangle$ and the claim follows.\\

We note that the bundle $E|_{M_b}$ has in addition to the inherited bracket $[ \, , \, ]_{E|_{M_b}}$ a natural anchor map $E|_{M_b} \to TM_b$ and pairing $\langle \, , \, \rangle$ both defined by restriction from $E$. It is straightforward now to see that this structure makes $E|_{M_b}$ into an exact Courant algebroid on $M_b$.\\

Next we show that an exact Courant algebroid $F \to M$ on $M$ gives rise to a family of exact Courant algebroids over $B$ in a natural way. We have subbundles $A^\perp \subseteq A \subseteq F$ defined as follows. $A$ is the kernel of the composition $F \buildrel \rho \over \to TM \buildrel \pi_* \over \to \pi^*(TB)$ and $A^\perp$ is the annihilator of $A$. We note that $A,A^\perp$ are subbundles of $F$ since $\pi_*$ has constant rank and that $A^\perp \subseteq A$, indeed it is straightforward to see that $A^\perp$ can be identified with the image of $\pi^*(T^*B)$ under the inclusion $\rho^* : T^*M \to F$.
\begin{proposition}
The space $\Gamma(A)$ of sections of $A$ is a subalgebra of $\Gamma(F)$ and the sections $\Gamma(A^\perp)$ of $A^\perp$ is a two-sided ideal in $\Gamma(A)$. The quotient bundle $E = A/A^\perp$ with the induced bracket on $\Gamma(A/A^\perp)$ is a family of exact Courant algebroids on $M$. If $F$ has \v{S}evera class $H \in H^3(M,\mathbb{R})$ then the exact Courant algebroid on the fiber $M_b = \pi^{-1}(b)$ induced by $E$ has \v{S}evera class $H|_{M_b} \in H^3(M_b,\mathbb{R})$.
\end{proposition}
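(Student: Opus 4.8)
The plan is to regard $E = A/A^\perp$ as a coisotropic reduction of $F$, deduce the Courant axioms for $E$ by inheritance from $F$, verify the exactness condition by a rank/kernel count, and finally compute the twisting $3$-form on a fibre by working in an isotropic splitting.

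First note that $A$ is a subbundle, since $\pi_*\circ\rho:F\to\pi^*(TB)$ is a composition of surjections and hence of constant rank; $A^\perp$ is then the annihilator of a subbundle, and $A^\perp\subseteq A$ as recalled above. \emph{Step 1} ($\Gamma(A)$ is a subalgebra): if $a,b\in\Gamma(A)$ then $\rho(a),\rho(b)$ are $\pi$-vertical vector fields, and since the vertical bundle $V$ is involutive, (C2) gives $\rho[a,b]=[\rho(a),\rho(b)]\in\Gamma(V)$, so $[a,b]\in\Gamma(A)$; together with (C3) this makes $\Gamma(A)$ a subalgebra. \emph{Step 2} ($\Gamma(A^\perp)$ is a two-sided ideal): the crucial fact is that $\langle a,s\rangle=0$ for all $a\in\Gamma(A)$, $s\in\Gamma(A^\perp)$. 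For any $c\in\Gamma(A)$, axiom (C5) reads $\rho(a)\langle s,c\rangle=\langle[a,s],c\rangle+\langle s,[a,c]\rangle$; the left side vanishes and $[a,c]\in\Gamma(A)$ by Step 1, so $\langle[a,s],c\rangle=0$ for all such $c$, i.e. $[a,s]\in\Gamma(A^\perp)$; and (C4) then gives $[s,a]=-[a,s]+d\langle s,a\rangle=-[a,s]\in\Gamma(A^\perp)$. Since $A^\perp$ is a subbundle, $\Gamma(A)/\Gamma(A^\perp)=\Gamma(A/A^\perp)=\Gamma(E)$, and the bracket of $F$ descends to a bracket on $\Gamma(E)$.

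\emph{Step 3} ($E$ is a family of exact Courant algebroids): the pairing restricted to $A$ has radical $A\cap A^\perp=A^\perp$, so it descends to a non-degenerate pairing on $E$; since $A^\perp=\rho^*(\pi^*(T^*M))\subseteq\rho^*(T^*M)=\ker\rho$, the anchor descends to a map $\bar\rho:E\to V\subseteq TM$ which factors through $V$; and the operator $d:\mathcal{C}^\infty(M)\to\Gamma(F)$ has image in $\ker\rho\subseteq A$, so it descends compatibly with $\bar\rho$ and the pairing. Axioms (C1)--(C5) for $E$ then follow from those of $F$ by restriction to $\Gamma(A)$ and passage to the quotient, exactly as in the fibrewise construction preceding the statement. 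Finally $\bar\rho$ is surjective because $\rho$ is and $\rho(A)=V$, while $\ker\bar\rho=\ker\rho/A^\perp=\rho^*(T^*M)/\rho^*(\pi^*T^*B)\cong T^*M/(\pi^*T^*B)\cong V^*$ (using injectivity of $\rho^*:T^*M\to F$ and the dual of $0\to V\to TM\to\pi^*TB\to0$); this yields the exact sequence $0\to V^*\to E\to V\to0$, so $E$ is a family of exact Courant algebroids.

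\emph{Step 4} (the \v{S}evera class on a fibre, and the main obstacle): choose an isotropic splitting of $F$, identifying $F=TM\oplus T^*M$ with Dorfman bracket (\ref{courb}) for a closed $3$-form $H$ representing the \v{S}evera class. Then $A=V\oplus T^*M$, $A^\perp=\pi^*(T^*M)$, and the induced splitting $E=V\oplus(T^*M/\pi^*T^*M)=V\oplus V^*$ is isotropic. Restricting to $M_b=\pi^{-1}(b)$, where $V|_{M_b}=TM_b$ and $(T^*M/\pi^*T^*M)|_{M_b}\cong T^*M_b$ via $i^*$, one computes the bracket on $E|_{M_b}=TM_b\oplus T^*M_b$ directly from (\ref{courb}), using that $d$ and interior product commute with $i^*$ and that the restriction to $M_b$ of a vertical field is tangent to $M_b$; the result is the $i^*H$-twisted Dorfman bracket on $M_b$, so the \v{S}evera class of $E|_{M_b}$ is $[i^*H]=H|_{M_b}$. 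The one place requiring genuine care is this last computation: one must keep track of how the twisting form is affected by the two successive operations (reduction to $E$, then restriction to $M_b$), and working in a splitting together with the naturality of $i^*$ is precisely what makes this bookkeeping manageable.
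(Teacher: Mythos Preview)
Your proof is correct and follows essentially the same route as the paper: verify closure of $\Gamma(A)$ via (C2), show $\Gamma(A^\perp)$ is an ideal via the Courant axioms, check exactness by a direct kernel computation, and read off the twisting form on a fibre from an isotropic splitting. Your Step~2 is in fact slightly cleaner than the paper's: the paper first argues that $\rho([a,s])=0$ (so the bracket is a $1$-form) and then pairs against lifts of vertical vector fields, whereas your single application of (C5) with $c\in\Gamma(A)$ arbitrary yields $\langle[a,s],c\rangle=0$ directly. Two small typos to fix: in Step~3 you wrote $A^\perp=\rho^*(\pi^*(T^*M))$ and in Step~4 $A^\perp=\pi^*(T^*M)$; in both places you mean $\pi^*(T^*B)$, as you have correctly elsewhere.
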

\begin{proof}
Let $a,b \in \Gamma(A)$, so $\rho(a),\rho(b)$ are vertical vector fields. Thus $\rho( [a,b]_F ) = [\rho(a),\rho(b)]$ is also vertical and $[a,b]_F \in \Gamma(A)$. Suppose now that $a \in \Gamma(A)$ and $b \in \Gamma(A^\perp)$. We must show that $[a,b]_F$ and $[b,a]_F$ take values in $A^\perp$. Note however that $[a,b]_F + [b,a]_F = \rho^* d \langle a , b \rangle = 0$, so it suffices to consider just $[a,b]_F$. We have $\rho( [a,b]_F ) = [\rho(a),\rho(b)] = 0$ since $\rho(b) = 0$. Thus $[a,b]_F$ is a $1$-form. Let $W$ be a vertical vector field and $W'$ a lift to a section of $F$ (in particular $W'$ is a section of $A$). It suffices to show that for all such $W,W'$ we have $\langle W' , [a,b]_F \rangle = 0$. Now $\langle W' , [a,b]_F \rangle = \rho(a)\langle W' , b \rangle - \langle [a,W']_F , b \rangle = \rho(a) ( b(W)) - b( [\rho(a),\rho(W)]) = 0$, since $b$ is in the annihilator of $A$.\\

We have shown that the bundle $E = A/A^\perp$ has a natural bracket on its space of sections. In addition the restriction to $A$ of the pairing $\langle \, , \, \rangle$ on $F$ descends to a non-degenerate pairing on $A/A^\perp$. Similarly the anchor map $\rho : F \to TM$ restricts to a map $A \to V$ which then descends to a map $A/A^\perp \to V$. It is straightforward to see that all of the induced structure makes $E$ into a Courant algebroid. We also see that the sequence $0 \to V^* \to E \to V \to 0$ is exact, so $E$ is a family of exact Courant algebroids over $B$.\\

Let $b \in B$ and $\pi^{-1}(b) = M_b$ the fiber over $b$. To determine the \v{S}evera class of $E|_{M_b}$ let us first choose an isotropic splitting $s : TM \to F$ for $F$.
Then there is a closed $3$-form $H \in \Omega^3(M)$ such that for all vector fields $X,Y$ on $M$ we have
\begin{equation}\label{splittinge}
[sX,sY]_F - s[X,Y] = i_Y i_X H.
\end{equation}
The cohomology class of $H$ in $H^3(M,\mathbb{R})$ is the \v{S}evera class of $F$. Observe that the restriction $s|_V$ of $s$ to the vertical tangent bundle $V$ maps into $A$ and by factoring out $A^\perp$ we get an induced isotropic splitting $\tilde{s} : V \to A/A^\perp$. If $X,Y$ are vertical vector fields, equation (\ref{splittinge}) becomes
\begin{equation*}
[\tilde{s}X,\tilde{s}Y]_E - \tilde{s}[X,Y] = i_Y i_X H \; {\rm mod}(A^\perp).
\end{equation*}
Restricting to the fiber $M_b$ we immediately see that the \v{S}evera class of $E|_{M_b}$ is just $[H]|_{M_b}$.
\end{proof}
The family of exact Courant algebroids $E = A/A^\perp$ obtained from $F$ will be called the {\em fiberwise reduction} of $F$. This reduction procedure is related to the reduction of Courant algebroids in \cite{bcg} (in particular to Lemma 3.7).\\

We now turn to the problem of finding a general classification for families of exact Courant algebroids. In fact, the classification follows almost immediately from the classification of
$AV$-Courant algebroids. $AV$-Courant algebroids, defined in \cite{lb}, are a kind of generalization of exact Courant algebroids involving a Lie algebroid $A$ and an
$A$-module $V'$, where we use a prime to avoid a clash in notation. If we take $A$ to be the Lie algebroid of the vertical tangent bundle $A = V = {\rm Ker}(\pi_*)$ and $V' = \mathbb{R}$ the trivial line bundle then an $AV'$-Courant algebroid is precisely what we call a family of exact Courant algebroids over $B$. It follows from \cite{lb} that isomorphism classes of families of exact Courant algebroids are classified by $H^3(V,\mathbb{R})$, degree $3$ Lie algebroid cohomology for $V$ with values in $\mathbb{R}$.

Let $d^V : \Gamma( \wedge^k V^*) \to \Gamma( \wedge^{k+1} V^*)$ be the Lie algebroid differential for $V$ (with values in the trivial module $\mathbb{R}$). This defines a differential graded complex $(\Gamma( \wedge^k V^*) , d^V)$ and by definition the Lie algebroid cohomology $H^k(V,\mathbb{R})$ is the degree $k$ cohomology of this complex. Let $h \in H^3(V,\mathbb{R})$. An explicit representative for the corresponding family of exact Courant algebroids is obtained as follows. First choose a representative $H \in \Gamma(\wedge^3 V^*)$ for $h$, so in particular $H$ is $d^V$-closed. Now let $E = V \oplus V^*$ and define a Dorfman bracket $[ \, , \, ]_H$ by
\begin{equation*}
[ X + \xi , Y + \eta]_H = [X,Y] + \mathcal{L}_X \eta - i_Y d^V\xi + i_Y i_X H.
\end{equation*}
In the above equation $\mathcal{L}_X$ denotes the Lie derivative in the sense of Lie algebroids, thus $\mathcal{L}_X \eta = i_X d^V \eta + d^V i_X \eta$. Observe that $E$ has a natural non-degenerate bilinear form given by pairing $V$ and $V^*$ and a natural anchor map $E \to V$ which is just the projection to $V$. It is straightforward to see that this structure makes $E$ into a family of exact Courant algebroids. From \cite{lb} we know that every family of exact Courant algebroids is isomorphic to one of this form for some unique class $h \in H^3(V,\mathbb{R})$. In analogy with the case of exact Courant algebroids we call the class $h \in H^3(V,\mathbb{R})$ the \v{S}evera class of $E$.\\

Recall that an exact Courant algebroid $F$ on $M$ gives rise to a family $E = A / A^\perp$ of exact Courant algebroids over $B$. Suppose that $F$ has \v{S}evera class $h \in
H^3(M,\mathbb{R})$. The inclusion $V \to TM$ is a morphism of Lie algebroids and so defines a natural restriction map $H^3(M,\mathbb{R}) \to H^3(V,\mathbb{R})$. It is almost immediate that the \v{S}evera class of the family $E$ is just the image of $h$ in $H^3(V,\mathbb{R})$.

In general the map $H^3(M,\mathbb{R}) \to H^3(V,\mathbb{R})$ is neither injective nor surjective so we would like to clarify the relation between these two groups. For simplicity we will assume the fibers of $M$ are compact. Choose a splitting for the short exact sequence $0 \to V \to TM \to \pi^*(TB) \to 0$. We then have an isomorphism $TM = \pi^*(TB) \oplus V$. There is an induced bi-grading on differential forms where the bundle of degree $(p,q)$-forms, denoted $\wedge^{p,q} T^*M$ is defined to be $\wedge^p \pi^*(T^*B) \otimes \wedge^q V^*$. Similarly let $\Omega^{p,q}(M)$ denote the sections of $\wedge^{p,q} T^*M$. Next introduce a filtration $F^k \wedge^n T^*M = \oplus_{p \ge k} \wedge^{p,n-p} T^*M$ and $F^k \Omega^n(M) = \oplus_{p \ge k} \Omega^{p,n-p}(M)$. We note that $d F^k \Omega^n(M) \subseteq F^k \Omega^{n+1}(M)$, so there is a corresponding spectral sequence converging to the cohomology of $M$. The $E_1$ term of this spectral sequence is obtained by taking cohomology in the vertical direction. More precisely observe that $\wedge^p \pi^*(TB)$ is naturally a module for the Lie algebroid $V$. Indeed we let a vertical vector field $W$ act on $\omega \in \Omega^{p,0}(X)$ by defining $W(\omega) = i_W d\omega$. Note that since $i_W \omega = 0$ this coincides with the Lie derivative of $\omega$ along $W$, so if $W_1,W_2$ are vertical vector fields then
\begin{equation*}
W_1 (W_2(\omega)) - W_2 (W_1(\omega)) = \mathcal{L}_{W_2} \mathcal{L}_{W_1}(\omega) - \mathcal{L}_{W_1} \mathcal{L}_{W_2}(\omega) = \mathcal{L}_{[W_1,W_2]} \omega = [W_1,W_2]\omega
\end{equation*}
as required. The graded differential complex for $V$ with values in $\Omega^{p,0}(M)$ is just the complex
\begin{equation*}
\Omega^{p,0} \buildrel d^V \over \longrightarrow \Omega^{p,1} \buildrel d^V \over \longrightarrow \Omega^{p,2} \buildrel d^V \over \longrightarrow \cdots
\end{equation*}
where $d^V$ is the vertical projection of the exterior derivative. It follows that the $E_1$-stage of the spectral sequence is given by
\begin{equation*}
E_1^{p,q} = H^q( V , \wedge^{p,0} T^*M).
\end{equation*}
The natural map $H^q(M,\mathbb{R}) \to H^q(V,\mathbb{R})$ is the composition $H^q(M,\mathbb{R}) \to E_\infty^{0,q} \to E_1^{0,q} = H^q(V,\mathbb{R})$. We remark that the $E_2$-stage of this spectral sequence is just the Serre spectral sequence for the fibration $\pi : M \to B$ \cite{grha}, namely $E_2^{p,q} = H^p(B , R^q \pi_* \mathbb{R})$. To explain the relation between the $E_1$ and $E_2$ stages we need to observe that $H^q(V , \wedge^{p,0} T^*M)$ is isomorphic to $\Omega^p(B , R^q \pi_* \mathbb{R} )$, that is the space of $p$-forms on the base with values in the flat vector bundle $R^q \pi_* \mathbb{R}$. The main step is to show that if $\omega$ is an $(0,q)$-form on $M$ which is $d^V$-closed and its restriction to each fiber is exact, then there is an $(0,q-1)$-form $\alpha$ such that $\omega = d^V \alpha$. This can be achieved by first using a partition of unity on the base to localize the problem and then using standard Hodge theory techniques.

Applying this in particular to degree $3$ cohomology we have that $H^3(V,\mathbb{R})$ is isomorphic to the smooth sections of the flat bundle $R^3 \pi_* \mathbb{R}$. On the other hand the restriction of a class in $H^3(M,\mathbb{R})$ to $H^3(V,\mathbb{R})$ maps into the space of {\em constant} sections of $R^3 \pi_* \mathbb{R}$. In general not every constant section of $R^3 \pi_* \mathbb{R}$ comes from a class in $H^3(M,\mathbb{R})$, since there are higher order differentials in the spectral sequence. However, if we suppose the base $B$ is contractible then $H^3(M,\mathbb{R})$ is precisely the space of constant sections of $R^3 \pi_* \mathbb{R}$. Note further in this case that $R^3 \pi_* \mathbb{R}$ is the trivial bundle with fiber isomorphic to $H^3(M_0,\mathbb{R})$, where $M_0$ is a fiber of $M$. To summarize, when $B$ is contractible we have an isomorphism $H^3(V,\mathbb{R}) \simeq \mathcal{C}^\infty(B) \otimes H^3(M_0,\mathbb{R})$ and $H^3(M,\mathbb{R}) \simeq H^3(M_0,\mathbb{R})$, the constant functions in $\mathcal{C}^\infty(B) \otimes H^3(M_0,\mathbb{R})$. In particular we have shown the following.
\begin{proposition}
Let $\pi : M \to B$ be a fiber bundle with compact fibers over a contractible base and let $E \to M$ be a family of exact Courant algebroids over $B$. Taking \v{S}evera classes of the fibers of $M$ defines a map $S : B \to H^3(M_0,\mathbb{R})$, where $M_0$ is a fiber of $M$. Then $E$ is the fiberwise reduction of an exact Courant algebroid on $M$ if and only if $S$ is constant.
\end{proposition}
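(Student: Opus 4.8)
The plan is to phrase everything in terms of \v{S}evera classes and to recognize the map $S$ as nothing but the \v{S}evera class of $E$ itself, once the identification $H^3(V,\mathbb{R}) \simeq \mathcal{C}^\infty(B) \otimes H^3(M_0,\mathbb{R})$ of the preceding discussion is in place. So first I would represent $E$ in standard form on $V \oplus V^*$ with bracket $[\,,\,]_H$ for a $d^V$-closed $H \in \Gamma(\wedge^3 V^*)$ representing $h_E \in H^3(V,\mathbb{R})$. Restricting the tautological isotropic splitting and $H$ to a fiber $M_b$ — exactly the computation used above for the \v{S}evera class of a fiberwise reduction — shows that the exact Courant algebroid $E|_{M_b}$ has \v{S}evera class $[H|_{M_b}] \in H^3(M_b,\mathbb{R})$. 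Since $B$ is contractible, $\pi$ is a trivial bundle and $R^3\pi_*\mathbb{R}$ is canonically the trivial bundle with fiber $H^3(M_0,\mathbb{R})$; under the induced isomorphism $H^3(V,\mathbb{R}) \simeq \mathcal{C}^\infty(B)\otimes H^3(M_0,\mathbb{R})$ the class $h_E$ corresponds to the function $b \mapsto [H|_{M_b}]$, which is precisely $S$. In particular $S$ is smooth, and $S$ is constant if and only if $h_E$ is a constant section of $R^3\pi_*\mathbb{R}$.

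Next I would reduce the statement ``$E$ is a fiberwise reduction'' to the condition $h_E \in \mathrm{im}(r)$, where $r \colon H^3(M,\mathbb{R}) \to H^3(V,\mathbb{R})$ is the restriction map. By the classification of families of exact Courant algebroids from \cite{lb}, two such families over $B$ are isomorphic exactly when their \v{S}evera classes in $H^3(V,\mathbb{R})$ agree. The fiberwise reduction of an exact Courant algebroid $F$ on $M$ with \v{S}evera class $h$ has \v{S}evera class $r(h)$, and since $h$ runs over all of $H^3(M,\mathbb{R})$ as $F$ varies, $E$ is isomorphic to the fiberwise reduction of some exact Courant algebroid on $M$ if and only if $h_E \in \mathrm{im}(r)$.

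Finally I would identify $\mathrm{im}(r)$ with the constant sections. The map $r$ is the edge homomorphism $H^3(M,\mathbb{R}) = E_\infty^{0,3} \to E_1^{0,3} = H^3(V,\mathbb{R})$ of the spectral sequence of the vertical-degree filtration, with $E_1^{p,q} \simeq \Omega^p(B, R^q\pi_*\mathbb{R})$ and $d_1$ the Gauss--Manin connection. Contractibility of $B$ forces $H^p(B, R^q\pi_*\mathbb{R}) = 0$ for $p > 0$, so the spectral sequence degenerates at $E_2$; hence $r$ is injective with image $E_2^{0,3}$, the flat sections of $R^3\pi_*\mathbb{R}$, which on a contractible base are exactly the constant sections. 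Combining the three steps: $E$ is a fiberwise reduction $\iff h_E \in \mathrm{im}(r) \iff h_E$ is a constant section $\iff S$ is constant.

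The genuine content lies in the last step — the computation of the $E_1$-page and the degeneration, i.e.\ the fiberwise Hodge-theory identification $H^q(V, \wedge^{p,0}T^*M) \simeq \Omega^p(B, R^q\pi_*\mathbb{R})$, which requires the fibers compact and the base contractible — but this was already carried out before the statement, so the proof of the proposition is largely an assembly of established facts. The one point still needing a short but careful verification is the compatibility in the first step between restricting a $d^V$-closed representative of $h_E$ to a fiber and extracting the \v{S}evera class of the resulting fiberwise exact Courant algebroid; this is the analogue for standard-form families of the computation in the last paragraph of the proof of the fiberwise-reduction proposition above.
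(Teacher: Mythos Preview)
Your proposal is correct and follows essentially the same approach as the paper: identify $S$ with the \v{S}evera class $h_E \in H^3(V,\mathbb{R}) \simeq \mathcal{C}^\infty(B)\otimes H^3(M_0,\mathbb{R})$, observe that fiberwise reductions realize exactly the image of $r\colon H^3(M,\mathbb{R}) \to H^3(V,\mathbb{R})$, and use the spectral sequence with contractible base to identify that image with the constant sections. Indeed the paper presents the proposition with the phrase ``In particular we have shown the following,'' i.e.\ as a direct summary of the preceding discussion rather than with a separate proof, and your write-up is a faithful reconstruction of that argument.
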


%%%%%%%%%%%%%%%%%%%%%%%%%%%%%%%%%%%%%%%%%%%%%%%%%%%%%%%%%%%%%%%%%%%%%%%%%%%%%%%%%%%%%%%%%%%%%%%%%%%%%
%%%%%%%%%%%%%%%%%%%%%%%%%%%%%%%%%%%%%%%%%%%%%%%%%%%%%%%%%%%%%%%%%%%%%%%%%%%%%%%%%%%%%%%%%%%%%%%%%%%%%

\subsection{The twisted Gauss-Manin connection}\label{twgmconn}

On a smooth manifold $M$ consider the bundle $S = \wedge^* T^*M$ of forms on $M$ of mixed degree. Sections of $S$ will simply be called forms. There is a natural $\mathbb{Z}_2$-grading on $S$, namely $S^0 = \wedge^{ev}T^*M$ is the bundle of forms of even degree, $S^1 = \wedge^{odd} T^*M$ the odd degree forms and we refer to sections of $S^0$/$S^1$ as even/odd forms. If $\alpha$ is a form of degree $k$ we often write $(-1)^\alpha$ in place of $(-1)^k$. Since this only depends on the mod $2$ degree we can similarly introduce sign factors $(-1)^\alpha$ for any even or odd form.

Let $H$ be a closed $3$-form on $M$. Associated to $H$ is a twisted differential $d_H : \Gamma(S^i) \to \Gamma(S^{i+1})$ which is defined by $d_H(\alpha) = d\alpha + H \wedge \alpha$. Observe that $(d_H)^2 = 0$, so we have a $\mathbb{Z}_2$-graded complex $( \Gamma(S^*) , d_H)$. The cohomology of this complex is denoted $H^*(M,H)$ and called the {\em twisted cohomology} of $(M,H)$. For any $2$-form $B$, let $e^B : S^i \to S^i$ be given by
\begin{equation*}
e^B \alpha = \alpha + B\wedge \alpha + \frac{1}{2} B \wedge B \wedge \alpha + \dots
\end{equation*}
One sees that $d_H \circ e^B = e^B \circ d_{H+dB}$, so that $e^B$ descends to an isomorphism $e^B : H^*(M,H+dB) \to H^*(M,H)$ of twisted cohomology groups. In particular we see that if $H_1,H_2$ are closed $3$-forms representing the same class in de Rham cohomology, then the twisted cohomology groups $H^*(M,H_1)$,$H^*(M,H_2)$ are isomorphic. Note however that the isomorphism $e^B : H^*(M,H_1) \to H^*(M,H_2)$ depends on the choice of $2$-form such that $H_1 = H_2 + dB$.\\

Let $\pi : M \to B$ be a fiber bundle and $E \to M$ a smooth family of exact Courant algebroids over $B$. The \v{S}evera classes of the fibers determines an element $S \in \Gamma(B,R^3 \pi_* \mathbb{R})$, that is a smooth section of the bundle of degree $3$ fiber cohomology. For $b \in B$ let $M_b$ denote the fiber of $M$ over $b$. In general the dimension of the twisted cohomology groups $H^*(M_b , S(b) )$ will vary with $b$, so that it is not even a vector bundle on $B$. To get smoothly varying twisted cohomology groups we will generally assume that the \v{S}evera class $S$ is locally constant. Note that when $S(b)$ is the Dixmier-Douady class of a gerbe on $M_b$ we have that $S(b)$ is integral. In this case $S$ is necessarily locally constant.

Assuming now that the \v{S}evera class is constant we know that for sufficiently small open subsets in the base $U \subseteq B$ we have an exact Courant algebroid $F \to
\pi^{-1}(U)$ such that $E|_{U}$ is the fiberwise reduction of $F$. Moreover the \v{S}evera class $H \in H^3(\pi^{-1}(U) , \mathbb{R})$ coincides with $S$. Although it is not
strictly necessary, it will simplify matters to assume that $E$ is the fiberwise reduction of a globally defined exact Courant algebroid $F \to M$. Let $H \in H^3(M,\mathbb{R})$ be the \v{S}evera class of $F$. We have a $\mathbb{Z}_2$-graded presheaf on $B$ which assigns to an open subset $U \subseteq B$ the group
$H^*(\pi^{-1}(U),H|_{\pi^{-1}(U)})$. Let $\mathcal{H}^*$ denote the sheaf associated to this presheaf.
\begin{proposition}
Let $B$ be connected. The sheaf $\mathcal{H}^*$ is a local system with coefficient group isomorphic to $H^*(M_0,H|_{M_0})$, the twisted cohomology of a fiber $M_0$ of $M$.
\end{proposition}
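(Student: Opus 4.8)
The plan is to exhibit $\mathcal{H}^*$ locally as a constant sheaf. Recall that a sheaf on $B$ is a local system exactly when every point has an open neighbourhood over which the sheaf is isomorphic to the constant sheaf with value some fixed group, and that when $B$ is connected this common value is then determined, up to isomorphism, by any single stalk. So fix $b \in B$ and choose a contractible open neighbourhood $U \ni b$ over which $\pi$ is trivial, say $\pi^{-1}(U) \cong U \times M_0$, with projection $p : U \times M_0 \to M_0$ and inclusion $i : M_0 = \{b\} \times M_0 \hookrightarrow U \times M_0$. Since $U$ is contractible, $p^*$ and $i^*$ are mutually inverse isomorphisms on de Rham cohomology, so $[\, H|_{\pi^{-1}(U)}\,] = p^*[\, H|_{M_0}\,]$ and we may choose a $2$-form $B$ on $\pi^{-1}(U)$ with $H|_{\pi^{-1}(U)} = p^* H_0 + dB$, where $H_0 := H|_{M_0}$.

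The next step is to identify $H^*(\pi^{-1}(U), H|_{\pi^{-1}(U)})$ with $H^*(M_0, H_0)$. The $B$-field transformation $e^B$ (whose effect on twisted cohomology is recalled in Section \ref{twgmconn}) gives an isomorphism $H^*(\pi^{-1}(U), p^* H_0) \simeq H^*(\pi^{-1}(U), H|_{\pi^{-1}(U)})$, so it remains to compute the twisted cohomology of $U \times M_0$ with the twist $p^* H_0$ pulled back from one factor. For this I would use a twisted Poincar\'e lemma in the base direction: take the standard homotopy operator $h$ realising the Poincar\'e lemma for $U \times M_0$ relative to $M_0$, so that $dh + hd = \mathrm{id} - p^* i^*$; because $p^* H_0$ is pulled back from $M_0$, the extra terms that wedging with $p^* H_0$ contributes to $d_{p^*H_0} h + h\, d_{p^*H_0}$ cancel against one another, so the same homotopy identity $d_{p^*H_0} h + h\, d_{p^*H_0} = \mathrm{id} - p^* i^*$ holds for the twisted differential. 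Since $p^*$ and $i^*$ are also chain maps for the pair $(d_{H_0}, d_{p^*H_0})$ and $i^* p^* = \mathrm{id}$, this exhibits $p^* : (\Omega^*(M_0), d_{H_0}) \to (\Omega^*(U\times M_0), d_{p^*H_0})$ as a quasi-isomorphism. Composing, we obtain an isomorphism $H^*(M_0, H_0) \simeq H^*(\pi^{-1}(U), H|_{\pi^{-1}(U)})$. Running through a cofinal system of such neighbourhoods $U' \subseteq U$ of $b$ (balls about $b$, say, so that $p, i, B, h$ restrict compatibly), these isomorphisms commute with the presheaf restriction maps; hence over this system every restriction map is an isomorphism onto $H^*(M_0, H_0)$, and the sheafification $\mathcal{H}^*|_U$ is the constant sheaf with value $H^*(M_0, H_0)$.

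This proves $\mathcal{H}^*$ is a local system. Its stalk at a point $b' \in B$ is the colimit of $H^*(\pi^{-1}(V), H|_{\pi^{-1}(V)})$ over neighbourhoods $V$ of $b'$, which by the computation above equals $H^*(M_{b'}, H|_{M_{b'}})$; and since $B$ is connected all of these are abstractly isomorphic, so the coefficient group is $H^*(M_0, H|_{M_0})$, as claimed. The one place where care is genuinely needed is the twisted Poincar\'e lemma in the second paragraph — verifying that the chain homotopy of the ordinary relative Poincar\'e lemma still does the job once the differential is twisted by the pulled-back $3$-form $p^* H_0$; the remaining ingredients (local triviality of $\pi$, the behaviour of $B$-shifts on twisted cohomology, and the passage from a compatible system of isomorphisms to a local system) are either formal or already established above.
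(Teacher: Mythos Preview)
Your argument is correct, and it takes a genuinely different route from the paper's. The paper proves that the inclusion $i : M_0 \hookrightarrow U \times M_0$ induces an isomorphism on twisted cohomology by invoking the spectral sequence associated to the filtration of $(\Omega^*, d_H)$ by form degree: at the $E_2$-page this is ordinary de~Rham cohomology, the inclusion is a homotopy equivalence, hence an isomorphism at $E_2$, hence on twisted cohomology. Your approach instead produces an explicit chain homotopy: you observe that the standard Poincar\'e-lemma operator $h$ for the contraction of $U$ satisfies $h(p^*H_0 \wedge \omega) = -\,p^*H_0 \wedge h(\omega)$ because $p^*H_0$ is constant along the homotopy, so the twisted identity $d_{p^*H_0} h + h\, d_{p^*H_0} = \mathrm{id} - p^* i^*$ follows from the untwisted one. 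This is more elementary (no spectral sequences) and in fact slightly stronger, giving a chain-level homotopy equivalence rather than merely a quasi-isomorphism. The paper's argument, on the other hand, generalizes immediately to any map inducing an isomorphism on ordinary cohomology, without needing to exhibit a homotopy operator. Your treatment of the passage to the sheafification---checking compatibility over a cofinal system of contractible neighbourhoods---is also more explicit than the paper's, which simply invokes local contractibility of $B$.
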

\begin{proof}
Consider a contractible subset $U \subseteq B$. The restriction $M|_U = \pi^{-1}(U)$ of $M$ over $U$ admits a trivialization $M|_U = U \times M_0$, where $M_0$ is a fiber of $M$. We have that $H^3(M|_U , \mathbb{R}) = H^3(M_0,\mathbb{R})$, so there exists a class $H_0 \in H^3(M_0,\mathbb{R})$ such that $H|_{M|_U} = H_0$. We claim that the restriction $H^*(M|_U,H_0) \to H^*(M_0,H_0)$ is an isomorphism. Given this the proposition follows since $B$ is locally contractible. To see that the restriction $H^*(M|_U,H_0) \to H^*(M_0,H_0)$ is an isomorphism recall first that for any pair $(M,H)$ consisting of a manifold $M$ and closed $3$-form $H$ on $M$, there is a spectral sequence for twisted cohomology which arises from the filtration $F^p \Omega(M) = \oplus_{k \ge p} \Omega^ k(M)$ on differential forms. We have that $E_2^* =H^*(M,\mathbb{R})$ (where the grading is the $\mathbb{Z}_2$-grading by form degree). If $f : X \to Y$ is a smooth map between manifolds $X,Y$ and there are closed $3$-forms $H_X,H_Y$
such that $H_X = f^*(H_Y)$ then we get an induced morphism between the spectral sequences for $(Y,H_Y)$ and $(X,H_X)$. At the $E_2$-stage the morphism is just the pullback $f^* : H^*(Y,\mathbb{R}) \to H^*(X,\mathbb{R})$. In the case at hand we set $X = M_0$, $Y = U \times M_0$, $H_X = H_0$ and $H_Y = H|_{M|_U}$. Since we get an isomorphism at the $E_2$-stage the twisted cohomology groups are isomorphic.
\end{proof}

\begin{definition}
The vector bundle with flat connection corresponding to $\mathcal{H}^*$ will be denoted $(H^*,\nabla)$ and the flat connection $\nabla$ called the {\em twisted Gauss-Manin connection}.
\end{definition}
We give here a simple local description of the twisted Gauss-Manin connection. Let $\pi : M \to B$ be a locally trivial fiber bundle with compact oriented fibers and suppose we are given a section $h$ of $R^3 \pi_* \mathbb{R}$, the \v{S}evera class of the fibers. For any $b \in B$ we may choose an open subset $U \subseteq B$ containing $b$ such that there is a local trivialization $\pi^{-1}(U) = U \times M_0$ and a closed $3$-form $H \in \Omega^3(U \times M_0)$ such that the restriction of $H$ to the fiber $M_u = \{u\} \times M_0$ is a representative for $h(u)$. Suppose $\tilde{s}$ is a differential form on $U \times M_0$ such that:
\begin{itemize}
\item{$\tilde{s}$ contracted with a vector field on $U$ is trivial,}
\item{the restriction $\tilde{s}|_{M_u}$ of $\tilde{s}$ to $M_u$ is $d_H$-closed.}
\end{itemize}
Then $\tilde{s}$ determines a local section $s$ of $H^*$ by setting $s(u) = [ \tilde{s}|_{M_u}]$. Note also that any section of $H^*$ over $U \times M_0$ can be represented in this manner. Let $X$ be a vector field on $U$ and view $X$ as a vector field on the product $U \times M_0$. We claim that the twisted Gauss-Manin connection can be expressed as follows:
\begin{equation}\label{tgmformula}
(\nabla_X s)(u) = [ (i_X d_H \tilde{s}) |_{M_u}].
\end{equation}
One first shows that (\ref{tgmformula}) is a well-defined connection for sections of $H^*$ over $U \times M_0$. Next let $a$ be a fixed class in $H^*(M_0,H|_{M_0})$. By taking $U$ to be contractible we may assume that the restriction map $H^*(U\times M_0 , H) \to H^*(M_0 , H|_{M_0})$ is an isomorphism. Therefore we may choose a $d_H$-closed form $\tilde{a}$ on $U \times M_0$ representing $a$. Under the twisted Gauss-Manin connection $u \mapsto [\tilde{a}|_{M_u}]$ corresponds to the closed section which takes the value $a$ at $u=0$. On the other hand this agrees with (\ref{tgmformula}) since $d_H \tilde{a} = 0$. Therefore (\ref{tgmformula}) coincides with the twisted Gauss-Manin connection, since they have the same constant sections.\\

We introduce a natural pairing on twisted cohomology which is preserved by the twisted Gauss-Manin connection. First we define an involution $\sigma : \wedge^* T^*M \to \wedge^* T^*M$ on the bundle of forms on $M$ by setting $\sigma(\alpha) = (-1)^{k(k-1)/2}\alpha$, where $\alpha$ has degree $k$. One sees that for all $\alpha$ and all closed $3$-forms $H$
\begin{equation*}
\sigma( d_H ( \sigma( \alpha))) = (-1)^\alpha d_{-H} \alpha.
\end{equation*}
In particular $\sigma$ descends to an isomorphism $\sigma : H^*(M,H) \to H^*(M,-H)$ of twisted cohomology groups.\\

Suppose that $M$ is $n$-dimensional. The {\em Mukai pairing} is a bilinear pairing $\langle \, , \, \rangle$ on the bundle of forms on $M$ with values in the determinant bundle $\wedge^n T^*M$. It is given by the following expression:
\begin{equation*}
\langle \alpha , \beta \rangle = [ \alpha \wedge \sigma(\beta) ]^n,
\end{equation*}
where $[\omega]^n$ denotes the degree $n$ part of $\omega$. Suppose now $M$ is compact, oriented and that $H$ is a closed $3$-form. For $d_H$-closed forms $\alpha,\beta$ we define $(\alpha,\beta) \in \mathbb{R}$ as follows:
\begin{equation*}
( \alpha , \beta ) = \int_M \langle \alpha , \beta \rangle.
\end{equation*}
We claim that $(\alpha,\beta)$ depends only on the twisted cohomology classes of $\alpha,\beta$. Indeed this follows since for any forms $\omega,\gamma$ we have
\begin{equation*}
( d_H \omega , \gamma) = (-1)^n(\omega , d_H \gamma).
\end{equation*}
Alternatively a more cohomological interpretation of $( \, , \, )$ is as follows. For any two closed $3$-forms $H_1,H_2$ the wedge product naturally induces maps $\wedge : H^i(M,H_1) \otimes H^j(M,H_2) \to H^{i+j}(M,H_1+H_2)$. Let $[M]$ denote the fundamental class of $M$. Then if $\alpha,\beta$ are $d_H$-closed and $[\alpha],[\beta]$ the corresponding twisted cohomology classes we see that
\begin{equation*}
(\alpha,\beta) = (  [\alpha]  \wedge \sigma [\beta] ) [M],
\end{equation*}
where we have used the fact that $[\alpha] \wedge \sigma[\beta]$ is a class in ordinary cohomology, so it can be evaluated on $[M]$.\\

Suppose that $\pi : M \to B$ is a fiber bundle with compact oriented fibers and $H$ a closed $3$-form on $M$. We have the flat bundle $H^*$ with fiber over $b \in B$ given by the twisted cohomology group $H^*(M_b , H_b)$, where $M_b = \pi^{-1}(b)$ and $H_b = H|_{M_b}$. We have just seen that we can define a pairing $( \, , \, ) : H^*(M_b,H_b) \otimes H^*(M_b , H_b) \to \mathbb{R}$ on twisted cohomology. This gives the bundle $H^*$ a natural bilinear form $Q : H^* \otimes H^* \to \mathbb{R}$. It is not hard to see that $Q$ depends smoothly on $b \in B$, in fact we can show something much stronger.
\begin{proposition}
Let $a,b$ be constant locally defined sections of $H^*$. Then $Q(a,b)$ is constant. Therefore $Q$ defines a covariantly constant bilinear form on $H^*$.
\end{proposition}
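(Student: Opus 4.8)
The plan is to establish the stronger statement that the twisted Gauss--Manin connection $\nabla$ annihilates $Q$: for every locally defined vector field $X$ on $B$ and all sections $s,t$ of $H^*$,
\begin{equation*}
X\big(Q(s,t)\big) = Q(\nabla_X s,\,t)\ \pm\ Q(s,\,\nabla_X t).
\end{equation*}
Granting this, the Proposition is immediate: if $a,b$ are covariantly constant then the right-hand side vanishes, so $Q(a,b)$ is locally constant, hence constant. The argument is local, so fix an open $U\subseteq B$ over which $\pi^{-1}(U) = U\times M_0$ and the local description (\ref{tgmformula}) of $\nabla$ applies, with a closed $3$-form $H$ on $U\times M_0$ whose fibrewise restrictions represent the \v{S}evera class and with representatives $\tilde s,\tilde t$ of $s,t$ by forms on $U\times M_0$ having no legs in the base directions and fibrewise $d_H$-closed restrictions. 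Then $Q(s,t)(u) = \int_{M_u}\langle\tilde s|_{M_u},\tilde t|_{M_u}\rangle = \int_{M_u}\big(\tilde s\wedge\sigma\tilde t\big)\big|_{M_u}$, the integral automatically picking out the top fibre degree, and no assumption on base legs of $H$ is needed, since the restrictions to $M_u$ below involve only its fibrewise part.

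The core step is to differentiate this fibre integral. For $X$ a vector field on $U$, lifted to $U\times M_0$ with vanishing vertical component, the flow of $X$ identifies nearby fibres and a standard computation gives $X(Q(s,t))(u) = \int_{M_u}\big(\mathcal{L}_X(\tilde s\wedge\sigma\tilde t)\big)\big|_{M_u}$. Since $\tilde s\wedge\sigma\tilde t$ has no base legs, $i_X(\tilde s\wedge\sigma\tilde t) = 0$, so Cartan's formula replaces $\mathcal{L}_X$ by $i_X d$. Next I invoke the algebraic identity
\begin{equation*}
d(\alpha\wedge\sigma\beta) = (d_H\alpha)\wedge\sigma\beta + (-1)^{|\alpha|+|\beta|}\,\alpha\wedge\sigma(d_H\beta),
\end{equation*}
valid for all forms $\alpha,\beta$ and any closed $3$-form $H$; it follows by writing $d\alpha = d_H\alpha - H\wedge\alpha$ and $d(\sigma\beta) = (-1)^{|\beta|}\sigma(d_H\beta) + H\wedge\sigma\beta$ — a rearrangement of the relation $\sigma(d_H(\sigma\gamma)) = (-1)^\gamma d_{-H}\gamma$ recalled above — and observing that the two contributions involving $H$ cancel because $H$ has odd degree. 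Applying $i_X$ to this identity with $\alpha = \tilde s$, $\beta = \tilde t$, using $i_X\tilde s = i_X\tilde t = 0$ and that $i_X$ commutes with $\sigma$ up to sign, and restricting to $M_u$ gives
\begin{equation*}
\big(i_X\,d(\tilde s\wedge\sigma\tilde t)\big)\big|_{M_u} = (i_X d_H\tilde s)\big|_{M_u}\wedge\sigma\big(\tilde t|_{M_u}\big)\ \pm\ \big(\tilde s|_{M_u}\big)\wedge\sigma\big((i_X d_H\tilde t)|_{M_u}\big).
\end{equation*}
By formula (\ref{tgmformula}), $(i_X d_H\tilde s)|_{M_u}$ is a $d_{H|_{M_u}}$-closed representative of $(\nabla_X s)(u)$, and similarly for $t$; integrating over $M_u$ and using that the Mukai pairing descends to twisted cohomology (via $(d_H\omega,\gamma) = (-1)^n(\omega, d_H\gamma)$), the two terms become $Q(\nabla_X s, t)(u)$ and $\pm Q(s,\nabla_X t)(u)$, proving the displayed formula.

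I expect no conceptual obstacle here — the explicit local formula (\ref{tgmformula}) for $\nabla$ does the real work — and the only genuine care needed is bookkeeping: justifying the differentiation under the fibre integral and the replacement $\mathcal{L}_X \rightsquigarrow i_X d$ (which uses only that $\tilde s,\tilde t$ have no base legs), and tracking signs in the displayed identity and in the $i_X$/$\sigma$ interchange. The latter signs are in any case irrelevant to the Proposition, since they multiply $Q(s,\nabla_X t)$, which vanishes when $t$ is covariantly constant. An alternative, purely homological route avoids the computation entirely: $\sigma$, the wedge product $H^*(-,H)\otimes H^*(-,-H)\to H^*(-,0)$, and evaluation on the fibrewise fundamental class are each morphisms of local systems over $B$, hence are flat, and $Q$ is their composite, so $Q$ is a covariantly constant section of $(H^*\otimes H^*)^*$.
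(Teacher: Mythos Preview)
Your argument is correct. The key identity $d(\alpha\wedge\sigma\beta) = d_H\alpha\wedge\sigma\beta + (-1)^{|\alpha|+|\beta|}\alpha\wedge\sigma(d_H\beta)$ holds (the $H$-terms cancel because $\sigma(H\wedge\beta) = (-1)^{|\beta|+1}H\wedge\sigma\beta$), and the closedness of $(i_X d_H\tilde s)|_{M_u}$, which you invoke via (\ref{tgmformula}), indeed follows from $dH=0$ by comparing the one-base-leg components. The sign bookkeeping you flag is harmless for the Proposition, as you note.

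However, your route is quite different from the paper's. You prove the differential identity $\nabla Q = 0$ by direct computation with the local formula (\ref{tgmformula}). The paper instead argues topologically: on a contractible $U$ the inclusion $i:M_b\hookrightarrow\pi^{-1}(U)$ is an isomorphism in twisted cohomology, and one defines a single pairing $\lambda(a,b) = (a\wedge\sigma b)[M_b]$ on $H^*(\pi^{-1}(U),H)$ by evaluating on the homology class of a fiber. Since all fibers $M_b$ represent the \emph{same} homology class in $H_*(\pi^{-1}(U),\mathbb{R})$, this pairing is manifestly independent of $b$, and it pulls back to $Q_b$ under $i^*$. Thus constancy of $Q$ falls out without ever differentiating. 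Your ``alternative, purely homological route'' in the last sentence is essentially this argument, phrased in terms of morphisms of local systems rather than fiber homology classes. What your main approach buys is the explicit Leibniz formula for $\nabla$ against $Q$, which is not stated in the paper; what the paper's approach buys is brevity and a proof that does not rely on the explicit description of $\nabla$.
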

\begin{proof}
We may restrict to a contractible open set $U \subseteq B$ in the base. Recall that the inclusion of a fiber $i : M_b \to \pi^{-1}(U)$ for $b \in U$ induces an isomorphism in twisted cohomology. The idea now is to define a pairing on $H^*( \pi^{-1}(U) , H)$ that coincides with the pairing on $H^*(M_b , H_b)$ under $i$. The difficulty is that $\pi^{-1}(U) $ will not be compact, so we can not define a pairing on $H^*( \pi^{-1}(U) , H)$ by integrating over $\pi^{-1}(U)$. One remedy would be to introduce twisted cohomology with compact support, but there is an even simpler alternative. Define a bilinear pairing $\lambda : H^*( \pi^{-1}(U) , H) \otimes H^*( \pi^{-1}(U) , H) \to \mathbb{R}$ on $H^*( \pi^{-1}(U) , H)$ as follows: for $a,b \in H^*( \pi^{-1}(U) , H)$ we set
\begin{equation*}
\lambda(a,b) = (a \wedge \sigma(b) )[M_b],
\end{equation*}
where $[M_b]$ is the fundamental class of the fiber $M_b$ in $H_*(\pi^{-1}(U),\mathbb{R})$. We observe that under the pullback isomorphism $i^* : H^*( \pi^{-1}(U) , H) \to H^*(M_b , H_b)$, $\lambda$ is identified with the pairing $( \, , \, ) = Q_b$. Finally note that that for all $b \in U$ the fiber $M_b$ above $b$ defines the same homology class $[M_b]$ in $H_*(\pi^{-1}(U),\mathbb{R})$. Therefore the pairing $\lambda$ restricts to $Q_b$ for all $b \in B$. This shows that $Q$ is given by a locally constant pairing.
\end{proof}

%%%%%%%%%%%%%%%%%%%%%%%%%%%%%%%%%%%%%%%%%%%%%%%%%%%%%%%%%%%%%%%%%%%%%%%%%%%%%%%%%%%%%%%%%%%%%%%%%%%%%
%%%%%%%%%%%%%%%%%%%%%%%%%%%%%%%%%%%%%%%%%%%%%%%%%%%%%%%%%%%%%%%%%%%%%%%%%%%%%%%%%%%%%%%%%%%%%%%%%%%%%
%%%%%%%%%%%%%%%%%%%%%%%%%%%%%%%%%%%%%%%%%%%%%%%%%%%%%%%%%%%%%%%%%%%%%%%%%%%%%%%%%%%%%%%%%%%%%%%%%%%%%
%%%%%%%%%%%%%%%%%%%%%%%%%%%%%%%%%%%%%%%%%%%%%%%%%%%%%%%%%%%%%%%%%%%%%%%%%%%%%%%%%%%%%%%%%%%%%%%%%%%%%

\section{Families of generalized complex structures}\label{holofamilies}

\subsection{Generalized complex geometry}\label{gcg}

Let $E \to M$ be a Courant algebroid on $M$ (exact or not). A {\em generalized almost complex structure} on $E$ is an endomorphism $J : E \to E$ such that $J^2 = -1$ and $\langle Ja , Jb \rangle = \langle a , b \rangle$ for all $a,b$. We may decompose $E$ into the $\pm i$-eigenspaces of $J$:
\begin{equation*}
E \otimes \mathbb{C} = L \oplus \overline{L},
\end{equation*}
where $L$ is the $+i$-eigenspace and $\overline{L}$ is the complex conjugate of $L$, which is then the $-i$-eigenspace. We say that $J$ is {\em integrable} if $L$ is closed under the Dorfman bracket. An integrable generalized almost complex structure is then called a {\em generalized complex structure}. Note that $L,\overline{L}$ are isotropic subspaces and it follows that the restriction of the Dorfman bracket gives $L,\overline{L}$ the structure of complex Lie algebroids.\\

Suppose that $E \to M$ is the exact Courant algebroid on $E$ with \v{S}evera class $h \in H^3(M,\mathbb{R})$. Given a closed $3$-form $H$ representing $h$ we can take $E$ to be $TM \oplus T^*M$ with the $H$-twisted Dorfman bracket. Let $S$ be the bundle of differential forms. There is a Clifford action of $E$ on $S$ as follows: for $X + \xi \in \Gamma(E)$ and $\omega \in \Gamma(S)$ the Clifford action is given by
\begin{equation*}
(X+\xi) \omega = i_X \omega + \xi \wedge \omega.
\end{equation*}
A non-vanishing section $\rho$ of $S_{\mathbb{C}} = S \otimes \mathbb{C}$ is called a {\em pure spinor} for $J$ if $L$ is the annihilator of $\rho$, where the annihilator of a form $\rho$ is the set of $a \in E$ such that $a\rho = 0$. Every generalized almost complex structure has a pure spinor locally, but not necessarily globally. However such a local pure spinor is unique up to scale, so there exists a complex line bundle $K \subset S$ such that the non-vanishing sections of $K$ are precisely the pure spinors for $J$. We call $K$ the {\em canonical bundle} of $J$. Pure spinors have definite parity, so there exists a $\tau \in \mathbb{Z}_2$ which we call the {\em parity} of $J$ such that $K \subset S^\tau$.\\

Since the $i$ eigenspace $L$ is isotropic we have that the exterior algebra $\wedge^* L^*$ is naturally a subalgebra of the Clifford algebra of $E$. Applying the Clifford action of $L^*$ to $K$, we get an isomorphism $S_{\mathbb{C}} \simeq \wedge^* L^* \otimes K$. This correspondence induces a $\mathbb{Z}$-grading on $S$ of the form:
\begin{equation*}
S_{\mathbb{C}} = U_{-n} \oplus U_{-n+1} \oplus \dots \oplus U_{n-1} \oplus U_n,
\end{equation*}
where $M$ is $2n$-dimensional and $U_k$ is the image of the Clifford action of $\wedge^{k+n} L^*$ on $K$. Additionally we have $\overline{U_k} = U_{-k}$ and the Mukai pairing $\langle \, , \, \rangle$ restricted to $U_j \otimes U_k$ is zero unless $j+k=0$, in which case it is non-degenerate.

We need also an alternative characterization of the induced grading on $S$. Since $J^2 = -1$  and $J$ preserves $\langle \, , \, \rangle$, we find that $\langle Ja,b\rangle + \langle a , Jb \rangle = 0$. Therefore the bilinear form $\langle J \, , \, \rangle$ defines a section of $\wedge^2 E^* \simeq \wedge^2 E$ using $\langle \, , \, \rangle$ to identify $E$ and $E^*$. There is a natural map from $\wedge^2 E$ to the Clifford algebra of $E$ which sends $a \wedge b$ with $a,b \in E$ to $[a,b]/2 = (ab-ba)/2$ in the Clifford algebra. Thus $J$ has a natural Clifford action on $S$. One can show that $U_k$ is the $-ik$-eigenspace of $J$ under this action.\\

We define operators $\partial : \Gamma(U_k) \to \Gamma(U_{k-1})$, $\overline{\partial} : \Gamma(U_k) \to \Gamma(U_{k+1})$ as follows. If $\omega$ is a section of $U_k$ then we may decompose $d_H \omega$ into components of each degree. By definition $\partial \omega$ is the projection of $d_H \omega$ to $U_{k-1}$ and $\overline{\partial} \omega$ is the projection of $d_H \omega$ to $U_{k+1}$. As the notation suggests, $\partial$ and $\overline{\partial}$ are conjugate operators.  For an arbitrary generalized almost complex structure $d_H$ will generally have components of degrees other than $\pm 1$. In fact the identity $d_H = \partial + \overline{\partial}$ is equivalent to integrability of $J$ \cite{gual}. Now if we assume $J$ is integrable then since $d_H^2 = 0$ we immediately obtain the identities $\partial^2 = \overline{\partial}^2 = 0$, $\partial \overline{\partial} + \overline{\partial} \partial = 0$.

%%%%%%%%%%%%%%%%%%%%%%%%%%%%%%%%%%%%%%%%%%%%%%%%%%%%%%%%%%%%%%%%%%%%%%%%%%%%%%%%%%%%%%%%%%%%%%%%%%%%%
%%%%%%%%%%%%%%%%%%%%%%%%%%%%%%%%%%%%%%%%%%%%%%%%%%%%%%%%%%%%%%%%%%%%%%%%%%%%%%%%%%%%%%%%%%%%%%%%%%%%%

\subsection{Smooth and holomorphic families}\label{smhofam}

We would like to have a notion of a family of generalized complex structures which is compatible with our notion of a family of exact Courant algebroids. It turns out that there are two natural ways to do this which correspond roughly to smoothly varying and holomorphically varying families.

\begin{definition}\label{smoothfam}
A {\em smooth family of generalized complex structures} is a fiber bundle $\pi : M \to B$, a family $E \to M$ of exact Courant algebroids over $B$ and a generalized complex structure $J$ on $E$.
\end{definition}
We have seen that given a family of exact Courant algebroids $E \to M$, the restriction $E|_{M_b}$ of $E$ to a fiber $M_b = \pi^{-1}(b)$ naturally inherits an exact Courant algebroid structure. It is straightforward to see that if $J$ is a generalized complex structure on $E$ then $J$ determines by restriction a generalized complex structure on each fiber.\\

Let $M_0,B$ be smooth manifolds and $H \in H^3(M_0,\mathbb{R})$. Let $M = M_0 \times B$, $\pi : M \to B$ the projection to $B$ and $V$ the vertical bundle, which can be identified with the tangent bundle of $M_0$ pulled back to $X$. There is a natural pullback map $H^3(M_0,\mathbb{R}) \to H^3(V,\mathbb{R})$ and so we get a corresponding family of exact Courant algebroids $E\to M$ which has \v{S}evera class $H$ on each fiber. As a bundle we can just take $E$ to be the pullback of $TM_0 \oplus T^*M_0$ to $M$. It is not hard to see that a generalized complex structure on $E$ is precisely a bundle endomorphism $J : E \to E$ such that the restriction of $J$ to each fiber is a generalized complex structure. In other words $J$ is a family of generalized complex structures on $M_0$, smoothly parametrized by $B$. Definition \ref{smoothfam} is more general than this since a class in $H^3(V,\mathbb{R})$ corresponds to a smooth $H^3(M_0,\mathbb{R})$-valued function $H : B \to H^3(M_0,\mathbb{R})$ on $B$. This would correspond to simultaneously deforming the complex structure and \v{S}evera class. Locally (with respect to the base) every smooth family of generalized complex structures has this form.\\

Next we introduce a more rigid notion of a family of generalized complex structure. Before getting to the definition let us explain the general idea. Let $\pi : M \to B$ be a fiber
bundle. This time we want to consider a family of exact Courant algebroids on $M$ that comes from the fiberwise reduction of an exact Courant algebroid $F \to M$ on $M$. Let $V$ be the vertical tangent bundle of $\pi : M \to B$ and define $A,A^\perp$ as in Section \ref{familiesexact}. The corresponding family of exact Courant algebroids is $E = A/A^\perp$. One way to get a complex structure on $A/A^\perp$ is to start with a complex structure on $F$ such that $A$ and $A^\perp$ are complex subbundles. If the complex structure on $F$ respects the pairing $\langle \, , \rangle$ on $F$ and $A$ is a complex subbundle then $A^\perp$ is automatically a complex subbundle as well. Notice that we also get an induced complex structure on $F/A\simeq \pi^*(TB)$. It is natural to demand that this complex structure comes from a complex structure on $B$ (so that $\pi : M \to B$ is in some sense holomorphic). Based on these considerations we make the following definition:
\begin{definition}\label{holofam}
A {\em holomorphic family of generalized complex structures} is a fiber bundle $\pi : M\to B$, an exact Courant algebroid $F \to M$, a generalized complex structure $J$ on $F$ and a complex structure $J_B$ on $B$ such that:
\begin{itemize}
\item{the subbundle $A = {\rm Ker}( \pi_* \circ \rho : F \to \pi^*(TB))$ is $J$-invariant,}
\item{the induced complex structure on the bundle $F/A \simeq \pi^*(TB)$ coincides with $-\pi^*(J_B)$.}
\end{itemize}
\end{definition}
Note the appearance in the above definition of the complex structure $-\pi^*(J_B)$ rather than $\pi_*(J_B)$. This is purely a matter of convention and our choice is motivated by existing conventions in generalized complex geometry.\\

Let $\pi : M \to B$, $F \to M$, $J,J_B$ be a holomorphic family of generalized complex structures. As we have already remarked, since $J$ respects the pairing $\langle \, , \, \rangle$ on $F$ (by definition of a generalized complex structure), we have automatically that $A^\perp$ is also $J$-invariant, so we get an induced generalized almost complex structure $J'$ on $E = A/A^\perp$. In fact since $J$ is integrable it is easy to see that $J'$ is integrable as well. Immediately this implies that $(E , J')$ is a smooth family of generalized complex structures and therefore induces on each fiber a generalized complex structure.\\

Let $M_0$ be a smooth manifold and $(B,J_B)$ a complex manifold. Set $M = M_0 \times B$ and $\pi = \pi_B : M \to B$ the projection to $B$ and similarly let $\pi_{M_0}$ be the projection to $M_0$. Let $H \in H^3(M_0,\mathbb{R})$ and lift $H$ to a class on $M$. Then we get a corresponding exact Courant algebroid $F \to M$ on $M$, namely $F = TM \oplus T^*M$ with the $H$-twisted Dorfman bracket. Since $M$ is the product $M = M_0 \times B$ we get an identification $TM = \pi^*_B(TB) \oplus \pi_{M_0}^*(TM_0)$ and similarly $T^*M = \pi_{M_0}^*(T^*M_0) \oplus \pi_B^*(T^*B)$. Let $E_{M_0}$ be the bundle over $M_0$ given by $E_{M_0} = TM_0 \oplus T^*M_0$. We can give $E_{M_0}$ the structure of an exact Courant algebroid on $M_0$ with $H$-twisted Dorfman bracket. Now write $F =\pi_B^*(TB) \oplus \pi_{M_0}^*( E_{M_0}) \oplus \pi_B^*(T^*B)$. Let $J_{M_0} : \pi^*_{M_0}(E_{M_0}) \to \pi_{M_0}^*(E_{M_0})$ be a bundle endomorphism of $\pi_{M_0}^*(E_{M_0})$ such that $J_{M_0}^2 = -1$ and $J_{M_0}$ preserves the natural pairing on $\pi_{M_0}^*(E_{M_0})$. We may then define a generalized almost complex structure $J$ on $F$ by
\begin{equation}\label{defj}
J = \left[ \begin{matrix} -\pi^*(J_B) & 0 & 0 \\ 0 & J_{M_0} & 0 \\ 0 & 0 & \pi^*(J_B^t) \end{matrix} \right],
\end{equation}
where $J_B^t : T^*B \to T^*B$ is the transpose of $J_B$. Now by a straightforward verification we deduce the following:
\begin{proposition}\label{holofamp}
The generalized almost complex structure $J$ in Equation (\ref{defj}) is integrable if and only if the following conditions hold:
\begin{itemize}
\item{the restriction of $J_{M_0}$ to each fiber of $\pi$ is integrable as a generalized almost complex structure on $E_{M_0}$,}
\item{$J_{M_0}$ depends holomorphically on $B$.}
\end{itemize}
\end{proposition}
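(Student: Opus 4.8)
The plan is to analyse the $+i$-eigenbundle $L\subset F\otimes\mathbb{C}$ of $J$ and to determine directly when it is closed under the $H$-twisted Dorfman bracket~(\ref{courb}); by definition this closedness is equivalent to integrability of $J$. From the block form~(\ref{defj}) one reads off $L$: the $+i$-eigenbundle of $-\pi^*(J_B)$ on $\pi_B^*(TB)\otimes\mathbb{C}$ is $\pi_B^*(T^{0,1}B)$, that of $J_{M_0}$ on $\pi_{M_0}^*(E_{M_0})\otimes\mathbb{C}$ is by definition a subbundle $L_{M_0}$, and that of $\pi^*(J_B^t)$ on $\pi_B^*(T^*B)\otimes\mathbb{C}$ is the pullback of the $(1,0)$-forms on $B$, so that
\begin{equation*}
L = \pi_B^*(T^{0,1}B)\ \oplus\ L_{M_0}\ \oplus\ \pi_B^*(\wedge^{1,0}T^*B).
\end{equation*}
I will take the representative of the \v{S}evera class to be $H=\pi_{M_0}^*(H_0)$ for a closed $3$-form $H_0$ on $M_0$; the only features of $H$ used below are that $i_WH=0$ for every $W$ tangent to the $B$-factor and that $H$ restricts to the fixed form $H_0$ on every slice $M_0\times\{b\}$.

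Since $L$ is isotropic, closedness of $\Gamma(L)$ under~(\ref{courb}) can be tested on a local frame: the axioms (C3), (C4) reduce a general bracket to brackets of frame sections, the $d\langle\,,\,\rangle$-terms vanishing on $L$. I would use a frame adapted to the product: holomorphic coordinates $t^q$ on $B$ furnish frames $\{\partial_{\overline{t}^q}\}$ of $\pi_B^*(T^{0,1}B)$ and $\{dt^q\}$ of $\pi_B^*(\wedge^{1,0}T^*B)$, with coefficients depending only on $B$, together with a local frame $\{a_\mu=Y_\mu+\eta_\mu\}$ of $L_{M_0}$, where $Y_\mu$ is tangent to the $M_0$-factor, $\eta_\mu$ a section of $\pi_{M_0}^*(T^*M_0)\otimes\mathbb{C}$, and the coefficients of $Y_\mu,\eta_\mu$ depend on both $M_0$ and $B$. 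Computing all pairwise brackets from~(\ref{courb}), the brackets among $\{\partial_{\overline{t}^q}\}$ and $\{dt^q\}$ are either zero or remain in $L$, using only that $J_B$ is integrable (as $B$ is a complex manifold) and that $i_WH=0$ for $W$ tangent to $B$; and $[a_\mu,dt^q]_H=[dt^q,a_\mu]_H=0$. Thus these brackets impose no conditions.

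The conditions come from the two remaining families. For $[\partial_{\overline{t}^q},a_\mu]_H$, the $i_Yd\xi$ and $i_Xi_YH$ terms in~(\ref{courb}) vanish and the bracket reduces to the section $\partial_{\overline{t}^q}a_\mu\in\Gamma(\pi_{M_0}^*(E_{M_0})\otimes\mathbb{C})$ obtained by differentiating the coefficients of $a_\mu$ in the $\overline{t}^q$-direction. Because $L\cap\bigl(\pi_{M_0}^*(E_{M_0})\otimes\mathbb{C}\bigr)=L_{M_0}$, closedness here is equivalent to $\partial_{\overline{t}^q}a_\mu\in\Gamma(L_{M_0})$, i.e. to $\overline{\partial}_B$-invariance of $L_{M_0}$; identifying the space of generalized complex structures on $E_{M_0}$ with an open subset of the (complex) manifold of maximal isotropics of real index zero, this is exactly the statement that $J_{M_0}$ depends holomorphically on $B$. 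For $[a_\mu,a_\nu]_H$ I would split the output along the three summands of $F\otimes\mathbb{C}$. The $\pi_B^*(TB)$-component vanishes since the anchor of $[a_\mu,a_\nu]_H$ is $[Y_\mu,Y_\nu]$, tangent to $M_0$. The $\pi_{M_0}^*(E_{M_0})$-component is, term by term, the $H_0$-twisted Dorfman bracket of $a_\mu$ and $a_\nu$ taken fibrewise over $B$ (no $B$-derivatives survive in it), so requiring it to lie in $L_{M_0}$ for all $\mu,\nu$ is equivalent to involutivity of $L_{M_0}$ on every slice, i.e. to integrability of the restriction of $J_{M_0}$ to each fibre of $\pi$. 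Finally the $\pi_B^*(T^*B)$-component works out to $2\sum_\alpha\langle\partial_{t^\alpha}a_\mu,a_\nu\rangle\,dt^\alpha$ (sum over real coordinates on $B$, using the pairing of $E_{M_0}$); its $(0,1)$-part vanishes for all $\mu,\nu$ exactly when $\langle\partial_{\overline{t}^q}a_\mu,a_\nu\rangle=0$ for all $\nu$, which since $L_{M_0}$ is maximal isotropic means $\partial_{\overline{t}^q}a_\mu\in\Gamma(L_{M_0})$ --- the same holomorphicity condition as before. Collecting the cases yields the stated equivalence.

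The main obstacle I anticipate is the bookkeeping in the $[a_\mu,a_\nu]_H$ computation: verifying that the $\pi_{M_0}^*(E_{M_0})$-component is genuinely the fibrewise Dorfman bracket (with all $B$-derivatives cancelling) while the $\pi_B^*(T^*B)$-component is controlled precisely by the $E_{M_0}$-pairing of the $B$-derivatives of the frame, so that fibrewise integrability and holomorphic dependence emerge cleanly from distinct components of one and the same bracket. A secondary point is fixing the precise meaning of ``$J_{M_0}$ depends holomorphically on $B$'' and checking its equivalence with $\overline{\partial}_B$-invariance of $L_{M_0}$.
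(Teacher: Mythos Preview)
Your proposal is correct and is precisely the ``straightforward verification'' the paper invokes without writing out: the paper gives no proof beyond that phrase, and your computation of the pairwise Dorfman brackets of an adapted frame of $L=\pi_B^*(T^{0,1}B)\oplus L_{M_0}\oplus\pi_B^*(\wedge^{1,0}T^*B)$, separating the $E_{M_0}$- and $T^*B$-components of $[a_\mu,a_\nu]_H$, is the natural way to carry it out. Your identification of the $(0,1)$-part of the $T^*B$-component with the condition $\partial_{\overline t^q}a_\mu\in\Gamma(L_{M_0})$ via maximal isotropy of $L_{M_0}$ is exactly what makes the holomorphicity condition appear, and your reading of ``$J_{M_0}$ depends holomorphically on $B$'' as $\overline\partial_B$-invariance of $L_{M_0}$ is the intended one.
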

Proposition \ref{holofamp} shows that our definition (\ref{holofam}) is able to capture the simple notion of a family of generalized complex structures on $M_0$ which vary holomorphically.

%%%%%%%%%%%%%%%%%%%%%%%%%%%%%%%%%%%%%%%%%%%%%%%%%%%%%%%%%%%%%%%%%%%%%%%%%%%%%%%%%%%%%%%%%%%%%%%%%%%%%
%%%%%%%%%%%%%%%%%%%%%%%%%%%%%%%%%%%%%%%%%%%%%%%%%%%%%%%%%%%%%%%%%%%%%%%%%%%%%%%%%%%%%%%%%%%%%%%%%%%%%

\subsection{Generalized Kodaira-Spencer classes}\label{gksc}

We recall that infinitesimal deformations of generalized complex structure are classified by a class in Lie algebroid cohomology which generalizes the Kodaira-Spencer class for families of complex manifolds. We will then show an alternative interpretation for this generalized Kodaira-Spencer class that emerges naturally from the point of view of smooth or holomorphic families.\\

Let $(M,H,J)$ be a generalized complex manifold. As usual, let $L$ be the $+i$-eigenbundle of the complexified generalized tangent bundle $E_{\mathbb{C}} = (TM \oplus T^*M) \otimes \mathbb{C}$, so $E_{\mathbb{C}} = L \oplus \overline{L}$. Suppose $J'$ is another generalized almost complex structure and let $L'$ be the $+i$-eigenbundle of $J'$. For any such $J'$ sufficiently close to $J$ pointwise, we have that $L'$ can be written as a graph of a map $\epsilon : L \to \overline{L}$:
\begin{equation}\label{ieigdef}
L' = \{ X + \epsilon X \, | \, X \in L \}.
\end{equation}
Recall that $L,\overline{L}$ are isotropic subspaces of $E$ and that the pairing on $E$ identifies $\overline{L}$ with $L^*$. We can therefore think of $\epsilon$ as a section of $L^* \otimes L^*$. We find that $L'$ is isotropic if and only if $\epsilon \in \Gamma(M,\wedge^2 L^*)$. Conversely all sufficiently small such $\epsilon$ determine a generalized almost complex structure by (\ref{ieigdef}). The generalized almost complex structure $J'$ corresponding to $\epsilon$ is integrable if and only if $\epsilon$ satisfies a Maurer-Cartan equation \cite{gual}
\begin{equation}\label{mcgencpx}
d_L \epsilon + \frac{1}{2} [\epsilon , \epsilon] = 0,
\end{equation}
where $d_L$ is the Lie algebroid differential and $[ \, , \, ]$ is the Schouten bracket $[ \, , ] : \Gamma(M,\wedge^j L^*) \otimes \Gamma(M,\wedge^k L^*) \to \Gamma(M,\wedge^{j+k-1} L^*)$ induced by the Lie algebroid structure on $L^*$. Now if we vary $J$ in a smooth family $\{ J(b) \}_{b \in B}$ with $J(0) = J$, we get (for sufficiently small deformations) a corresponding smooth family $\epsilon(b)$ of solutions to the Maurer-Cartan equation, with $\epsilon(0) = 0$. Given a tangent vector $T \in T_0B$, let $\epsilon_T = T(\epsilon)$ be the derivative of $\epsilon$ at $0$ in the direction $T$. We find that equation (\ref{mcgencpx}) linearizes to $d_L ( \epsilon_T ) = 0$, so $\epsilon_T$ defines a class $[\epsilon_T] \in H^2(L)$ in Lie algebroid cohomology. This gives a map $\rho : T_0B \to H^2(L)$, which we call the {\em generalized Kodaira-Spencer class}. When $M$ is compact it is possible to show that automorphisms of the $H$-twisted Courant bracket on $TM \oplus T^*M$ generated by the adjoint action of vector fields and $1$-forms give rise to deformations of $J$ with trivial generalized Kodaira-Spencer class. The generalized Kodaira-Spencer class is therefore a measure of non-trivial deformations.\\  

We now consider an interpretation of generalized Kodaira-Spencer classes from the point of view of smooth families of generalized complex families. Let $\pi : M \to B$ be a fiber bundle with fiber $M_0$ and $E \to M$ a smooth family of exact Courant algebroids on $M$. We will also assume that $E$ is the fiberwise reduction of an exact Courant algebroid $F \to M$ on $M$. The idea is that vector fields on $M$ can in some sense be used to generate non-trivial derivations of the Courant algebroid $E$. First, let us define what we mean by a derivation.
\begin{definition}
Let $(E,[ \, , \, ], \langle \, , \, \rangle , \rho)$ be a Courant algebroid on a manifold $M$. A {\em derivation} of $E$ is a linear map $D : \Gamma(E) \to \Gamma(E)$ on the sections of $E$ such that there exists a vector field $V$ on $M$ so that the pair $(D,V)$ satisfies the following identities:
\begin{itemize}
\item[(D1)]{$D(fx) = V(f)x + fDx$,}
\item[(D2)]{$D[x,y] = [Dx,y] + [x,Dy]$,}
\item[(D3)]{$V\langle x , y \rangle = \langle Dx , y \rangle + \langle x , Dy \rangle$,}
\end{itemize}
for all $x,y \in \Gamma(E)$ and functions $f$.
\end{definition}
\begin{remark}
The vector field $V$ is uniquely determined by the derivation $D$. Axiom (D1) states that $D$ is a covariant differential operator \cite{mac}. Also from the definition it follows that for all $x \in \Gamma(E)$ we have $\rho(Dx) = [V , \rho(x)]$.
\end{remark}
We observe that for any Courant algebroid $E$ and any section $x$ of $E$ we get an induced derivation $ad_x : \Gamma(E) \to \Gamma(E)$ defined by $ad_x(y) = [x,y]$. Such a derivation may be called an inner derivation. More interesting are derivations which are not inner. We now show how a Courant algebroid which is a fiberwise reduction admits many derivations that are not inner. Let $\pi : M \to B$ be a locally trivial fiber bundle, $F$ an exact Courant algebroid on $M$ and $E$ the fiberwise reduction of $F$. For definiteness let $F = TM \oplus T^*M$ with the $H$-twisted Dorfman bracket, for a closed $3$-form $H \in \Omega^3(M)$. Let $Z$ be a vector field on $B$. Choose a vector field $\tilde{Z}$ on $M$ such that for all $x \in M$ we have $\pi_*( \tilde{Z}(x) ) = Z(\pi(x))$. We claim that the inner derivation $ad_{\tilde{Z}}$ of $F$ induces a derivation on $E$ which is not inner, except when $Z = 0$.

Let $A$ be the subbundle of $F$ which is the kernel of $\pi_* \circ \rho : F \to \pi^*(TB)$. It is clear that $ad_{\tilde{Z}}$ sends $\Gamma(A)$ to $\Gamma(A)$ since for any vertical vector field $V$ we have that $[\tilde{Z},V]$ is also vertical. Let $A^\perp$ be as usual the annihilator of $A$ in $F$. We find that $ad_{\tilde{Z}}$ sends $\Gamma(A^\perp)$ to $\Gamma(A^\perp)$. In fact this follows from the fact that $ad_{\tilde{Z}}$ preserves $\Gamma(A)$ and property (D3) for derivations. Thus $ad_{\tilde{Z}}$ induces a map $D : \Gamma(A/A^\perp) \to \Gamma(A/A^\perp)$. Now recall that $E = A/A^\perp$ is Courant algebroid, the fiberwise reduction of $F$. It is straightforward to see that $D$ is a derivation of $E$. Moreover $D$ can not be inner unless $Z = 0$. We note that $D$ depends of both the choice of lift $\tilde{Z}$ of $Z$ and the choice of isotropic splitting for $T^*M \to F \to TM$. However for a given vector field $Z$ on the base $B$, one sees that the different choices give rise to derivations of $E$ which differ by an inner derivation. Any derivation $D$ associated to a vector field $Z$ on $B$ in this manner will be called a lift of $Z$ to a derivation of $E$.\\

Let $J$ be a generalized complex structure on $E$. Let $L$ be the $+i$-eigenspace of $J$ on $E_{\mathbb{C}} = E \otimes \mathbb{C}$ so that $E_{\mathbb{C}} = L \oplus \overline{L}$. Let $Z$ be a vector field on $B$ and $D$ a lift of $Z$ to a derivation of $E$ as previously described. Consider now the following map $\epsilon : \Gamma(L) \times \Gamma(L) \to \mathcal{C}^\infty(M)$ given as follows:
\begin{equation}
\epsilon(a,b) = \langle Da , b \rangle,
\end{equation}
where $a,b$ are sections of $L$. Since $L$ is isotropic it follows that $\epsilon$ is $\mathcal{C}^\infty(M)$-linear in $a,b$. Moreover $\epsilon$ is skew-symmetric by (D3), so we may view $\epsilon$ as a section of $\wedge^2 L^*$.
\begin{proposition}
The class $\epsilon \in \wedge^2 L^*$ is closed under the Lie algebroid differential. Moreover the cohomology class $[\epsilon] \in H^2(L)$ depends only on the vector field $Z$, not the choice of derivation $D$ lifting $Z$.
\end{proposition}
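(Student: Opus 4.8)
The plan is to prove the two assertions in turn: that $\epsilon$ is $d_L$-closed, and that replacing the lift $D$ by another lift $D'$ of $Z$ changes $\epsilon$ only by a $d_L$-exact form. For closedness I would compute $(d_L\epsilon)(a,b,c)$ directly for $a,b,c\in\Gamma(L)$, using throughout that $L$ is isotropic and (by integrability) closed under the Dorfman bracket. Since $\langle x,y\rangle=0$ for $x,y\in\Gamma(L)$, axiom (D3) forces $\epsilon(x,y)=\langle Dx,y\rangle=-\langle x,Dy\rangle$, so $\epsilon$ is genuinely skew and both sums in $(d_L\epsilon)(a,b,c)=\rho(a)\epsilon(b,c)-\rho(b)\epsilon(a,c)+\rho(c)\epsilon(a,b)-\epsilon([a,b],c)+\epsilon([a,c],b)-\epsilon([b,c],a)$ can be written cyclically, giving $(d_L\epsilon)(a,b,c)=\sum_{\mathrm{cyc}}\big(\rho(a)\langle Db,c\rangle-\langle D[a,b],c\rangle\big)$. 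Expanding the first summand of each cyclic piece with (C5) and the second with (D2) cancels the $\langle[a,Db],c\rangle$ terms and leaves $(d_L\epsilon)(a,b,c)=\sum_{\mathrm{cyc}}\big(\langle Db,[a,c]\rangle-\langle[Da,b],c\rangle\big)$, where $[a,c]\in\Gamma(L)$. To finish one uses: (i) (D3) applied to $\langle a,[b,c]\rangle=0$ to rewrite $\langle Da,[b,c]\rangle=-\langle[Db,c],a\rangle-\langle[b,Dc],a\rangle$; (ii) (C5) together with $\langle y,z\rangle=0$ on $\Gamma(L)$ to obtain the skew-symmetry $\langle[Dx,y],z\rangle=-\langle[Dx,z],y\rangle$; and (iii) (C4) to antisymmetrize the remaining mixed brackets, which produces $\rho(\cdot)\langle\cdot,\cdot\rangle$-terms. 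Feeding these back, and using that the cyclic sums $\sum_{\mathrm{cyc}}\langle[Da,b],c\rangle$ and $\sum_{\mathrm{cyc}}\langle[a,Db],c\rangle$ each reduce to a single such sum after relabelling, one gets a small linear system forcing $(d_L\epsilon)(a,b,c)=0$.

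Conceptually this is the linearization of a Maurer--Cartan equation, which I would mention as a cleaner alternative. The derivation $D$ has an associated vector field on $M$ covering $Z$; let $\Phi_t$ be its flow, a locally defined one-parameter family of Courant algebroid automorphisms of $E$. Then $J_t:=\Phi_{-t}J\Phi_t$ is a smooth family of generalized complex structures with $J_0=J$, integrable because $\Phi_{-t}$ preserves the bracket, pairing and anchor; its $+i$-eigenbundle $\Phi_{-t}(L)$ is, for small $t$, the graph of a solution $\epsilon_t\in\Gamma(\wedge^2 L^*)$ of $d_L\epsilon_t+\tfrac12[\epsilon_t,\epsilon_t]=0$ with $\epsilon_0=0$ and $\tfrac{d}{dt}\epsilon_t|_{t=0}=-\epsilon$. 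Differentiating the Maurer--Cartan equation at $t=0$ gives $d_L\epsilon=0$.

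For the second assertion, recall from the discussion preceding the proposition that any two lifts $D,D'$ of $Z$ differ by an inner derivation, so $D'=D+ad_r$ for some $r\in\Gamma(E)$, which we complexify keeping $r$ real. Decompose $r=r_++r_-$ with respect to $E_{\mathbb{C}}=L\oplus\overline{L}$. Then $(\epsilon'-\epsilon)(a,b)=\langle[r,a],b\rangle$, and since $r_+,a,b\in\Gamma(L)$ with $L$ isotropic and involutive the term $\langle[r_+,a],b\rangle$ vanishes, so $(\epsilon'-\epsilon)(a,b)=\langle[r_-,a],b\rangle$. Using the pairing to identify $\overline{L}$ with $L^*$, let $\nu\in\Gamma(L^*)$ be the one-form $\nu=\langle r_-,\,\cdot\,\rangle$. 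A short computation with (C4) and (C5) gives $\langle[r_-,a],b\rangle=\rho(b)\nu(a)-\rho(a)\nu(b)+\nu([a,b])=-(d_L\nu)(a,b)$, which is exactly the statement that the mixed component $\overline{L}\otimes L\to E$ of the Dorfman bracket, read back into $\overline{L}\cong L^*$, realizes the Lie algebroid differential of $L$ (i.e.\ that $(L,\overline{L})$ is a Lie bialgebroid). Hence $\epsilon'-\epsilon=-d_L\nu$ and $[\epsilon]\in H^2(L)$ is unchanged.

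The main obstacle I anticipate is the closedness computation: the $\rho(\cdot)\langle\cdot,\cdot\rangle$-terms introduced by antisymmetrizing Dorfman brackets via (C4) have to cancel within the cyclic sum, which is bookkeeping that is easy to get wrong and requires careful tracking of the normalization conventions in (C4) and in the definition of $d$. The flow/Maurer--Cartan argument circumvents this but shifts the work onto verifying that $-\epsilon$ is the derivative of the Maurer--Cartan solution $\epsilon_t$ associated to $J_t=\Phi_{-t}J\Phi_t$. The Lie bialgebroid identity underlying the second assertion is standard and should be routine.
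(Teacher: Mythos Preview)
Your proposal is correct and follows essentially the same route as the paper. For closedness, the paper also computes $(d_L\epsilon)(a,b,c)$ directly, rewriting the six terms cyclically, expanding via (C5) and (D2) to obtain (in your notation) $\sum_{\mathrm{cyc}}\big(\langle Db,[a,c]\rangle-\langle[Da,b],c\rangle\big)$, and then finishing with (C4) and the isotropy of $L$; your outline of this computation, while schematic at the end, is accurate. For independence of the lift, the paper does exactly what you do: write the inner section as $c+\overline{c}$ with $c\in\Gamma(L)$, observe that the $c$-part contributes nothing by isotropy and involutivity, and verify that the $\overline{c}$-part equals $-d_L\overline{c}$ via (C4)--(C5).

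The one genuinely different ingredient you offer is the Maurer--Cartan/flow argument for closedness: the paper does not use this, working purely by direct calculation. Your version is conceptually cleaner and makes transparent that $[\epsilon]$ is the generalized Kodaira--Spencer class of the family $\Phi_{-t}J\Phi_t$, which is exactly the interpretation the paper establishes afterwards by other means. The cost is that one must check that a derivation $D$ integrates (at least locally) to a one-parameter family of Courant automorphisms and that $\dot\epsilon_0=-\epsilon$; both are routine but not entirely free. The paper's direct computation avoids any appeal to flows or to the Maurer--Cartan framework and is self-contained from the Courant axioms, at the price of the bookkeeping you flag.
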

\begin{proof}
Let $a,b,c \in \Gamma(L)$. Then
\begin{align*}
(d_L \epsilon)(a,b,c) =& \rho(a)\langle Db , c \rangle - \rho(b)\langle Da , c \rangle + \rho(c)\langle Da,b \rangle \\
& - \langle D[a,b],c \rangle + \langle D[a,c] , b \rangle - \langle D[b,c] , a \rangle \\
=& \rho(a)\langle Db,c \rangle + \rho(b)\langle Dc , a \rangle + \rho(c)\langle Da , b \rangle \\
&+ \langle Dc , [a,b] \rangle + \langle Db , [c,a] \rangle + \langle Da , [b,c] \rangle \\
=& \langle [a,Db] , c \rangle + \langle [b,Dc],a \rangle + \rho(c)\langle Da , b \rangle + \langle Da, [b,c] \rangle.
\end{align*}
Now since $[a,Db] + [Db,a] = 2\langle a , Db \rangle$, we find that $\langle [a,Db] , c \rangle + \rho(c)\langle a,Db \rangle = - \langle [Db,a] , c \rangle$. Substituting we obtain
\begin{align*}
(d_L \epsilon)(a,b,c) &= -\langle [Db,a],c\rangle + \langle [b,Dc],a\rangle + \langle Da,[b,c] \rangle \\
&= \langle a , [Db,c] \rangle + \langle a , [b,Dc] \rangle + \langle Da,[b,c] \rangle \\
&= \langle a , D[b,c] \rangle + \langle Da,[b,c]\rangle \\
&= 0,
\end{align*}
so $d_L \epsilon = 0$ as claimed.\\

To show that the cohomology class of $\epsilon$ depends only on $Z$ recall that any two derivations of $E$ associated to $Z$ differ by an inner derivation. Thus it suffices to show that for any section $x$ of $E$ the element $\delta \in \Gamma(\wedge^2 L^*)$ given by $\delta(a,b) = \langle [x,a],b \rangle$ is exact. Write $x = c + \overline{c}$ where $c \in \Gamma(L)$. We can think of $\overline{c}$ as a section of $L^* \simeq \overline{L}$. We then find
\begin{align*}
(d_L \overline{c})(a,b) &= \rho(a)\langle \overline c , b \rangle - \rho(b)\langle \overline{c} , a \rangle - \langle \overline{c} , [a,b] \rangle \\
&= \langle [a,\overline{c}],b\rangle + \langle \overline{c} , [a,b] \rangle - \langle b , [a,\overline{c}] + [\overline{c},a] \rangle - \langle \overline{c} , [a,b] \rangle \\
&= -\langle b , [\overline{c},a] \rangle \\
&= - \langle [c + \overline{c} , a] , b \rangle \\
&= -\delta(a,b),
\end{align*}
which shows that $\delta$ is exact.
\end{proof}
We have shown that there exists a natural map $\kappa : \Gamma(TB) \to H^2(L)$ which sends a vector field $Z$ on $B$ to the corresponding Lie algebroid cohomology class. We would like to interpret $\kappa$ as describing the infinitesimal deformations of a family of generalized complex structures. Note that if $U \subset B$ is an open subset of $B$ then we can restrict the Courant algebroid $E$ and generalized complex structure on $E$ to the open set $\pi^{-1}(U)$ of $M$. In this way $H^2(L)$ becomes a presheaf on $B$ and the map $\kappa$ is a presheaf map. In Section \ref{familiesexact} we explained how it is possible to restrict the Courant algebroid $E$ to any fiber of $M$. The same reasoning applies to $L$ and also applies at the level of Lie algebroid cohomology as we now explain. Let $b \in B$ and let $M_b = \pi^{-1}(b)$ be the fiber over $b$. We let $L_b$ denote the Lie algebroid obtained by restricting $L$ to $M_b$. Thus $L_b = L|_{M_b}$ as a vector bundle and the Lie algebroid bracket is defined by $[a,b]_{L_b} = ([\tilde{a},\tilde{b}]_L)|_{M_b}$, where $\tilde{a},\tilde{b}$ are smooth extensions of $a,b$ to sections of $L$. In a similar manner one sees that the Lie agebroid differential for $L,L_b$ are related as follows:
\begin{equation*}
(d_{L_b} \omega)(a_0,\dots,a_{k+1}) = (d_L \tilde{\omega})(\tilde{a_0}, \dots , \tilde{a_{k+1}})|_{M_b},
\end{equation*}
where $\omega \in \Gamma(\wedge^k L_b^*)$, $a_0,\dots , a_{k+1} \in \Gamma(L_b)$ and $\tilde{\omega},\tilde{a_0},\dots,\tilde{a_{k+1}}$ are smooth extensions. It follows that for any $b \in B$ there is a natural restriction map $H^2(L) \to H^2(L_b)$.
\begin{proposition}
Let $Z$ be a vector field on $B$. For any $b \in B$ let $\kappa_b(Z)$ denote the image of $\kappa(Z)$ in $H^2(L_b)$. Then $\kappa_b(Z)$ only depends on the value of $Z$ at $b$. Moreover, if we choose a local trivialization $\pi^{-1}(U) = U \times M_b$ for a neighborhood $U$ of $b$ so that the generalized complex structure on $E$ can be viewed as a generalized complex structure on $M_b$ that varies smoothly with $U$, then $\kappa_b(Z)$ is the generalized Kodaira-Spencer class for the deformation of generalized complex structure on $M_b$ in the direction $Z_b \in T_bU$.
\end{proposition}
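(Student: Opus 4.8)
The plan is to reduce the statement to a computation in the given local trivialization, where the situation becomes that of a classical smoothly varying family of generalized complex structures on $M_0:=M_b$ and the derivation $D$ becomes ordinary differentiation along $Z$. Since $\kappa$ is a presheaf map and the restriction $H^2(L)\to H^2(L_b)$ factors through $H^2(L|_{\pi^{-1}(U)})$, we may work over the neighbourhood $U$ of $b$, which after shrinking we take to be contractible. Then the restriction of $F$ to $\pi^{-1}(U)\cong U\times M_0$ has \v{S}evera class pulled back from $M_0$, so we may take $F=T(U\times M_0)\oplus T^*(U\times M_0)$ with the $H$-twisted Dorfman bracket for a closed $3$-form $H$ pulled back from $M_0$; its fiberwise reduction $E=A/A^\perp$ is then identified with $\pi_{M_0}^*(E_{M_0})$, where $E_{M_0}=TM_0\oplus T^*M_0$ carries the $H$-twisted bracket, so that sections of $E$ over $\pi^{-1}(U)$ are precisely $u$-dependent sections of $E_{M_0}$. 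In this picture $J$ is a smooth family $\{J(u)\}_{u\in U}$ of generalized complex structures on $E_{M_0}$ with $J(b)=J$; writing $L(u)\subseteq E_{M_0}\otimes\mathbb{C}$ for the $+i$-eigenbundle of $J(u)$, we have $L(b)=L_b$ and, for $u$ near $b$, $L(u)$ is the graph of some $\epsilon(u)\in\Gamma(\wedge^2 L_b^*)$ over $L_b$ as in (\ref{ieigdef}), with $\epsilon(b)=0$.

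Next I would identify the derivation. By the previous proposition $\kappa(Z)$ does not depend on the lift of $Z$, so I would use the lift coming from $ad_{\tilde{Z}}$, where $\tilde{Z}$ is the vector field on $U\times M_0$ equal to $Z$ on the $U$-factor and zero on the $M_0$-factor. For a $u$-dependent section $x=W(u)+\xi(u)$ of $E_{M_0}$, lifted to the section $W(u)+\xi(u)$ of $A\subseteq F$, formula (\ref{courb}) gives $[\tilde{Z},W(u)+\xi(u)]_H=[\tilde{Z},W(u)]+\mathcal{L}_{\tilde{Z}}\xi(u)+i_{\tilde{Z}}i_{W(u)}H$. Here $[\tilde{Z},W(u)]=\partial_Z W$ and $\mathcal{L}_{\tilde{Z}}\xi(u)=\partial_Z\xi$ are the derivatives along $Z$ of the families $W(u),\xi(u)$, again a vertical vector field and a $1$-form on $M_0$, while $i_{\tilde{Z}}i_{W(u)}H=0$ since $\tilde{Z}$ has no vertical component and $H$ has no component involving the base directions. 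Passing to $A/A^\perp$ discards only $1$-forms pulled back from $U$, of which there are none among these terms, so the induced derivation $D$ of $E$ acts on $u$-dependent sections of $E_{M_0}$ simply by $Dx=\partial_Z x$.

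Finally I would compare with the generalized Kodaira--Spencer class. For a fixed (hence $u$-independent) section $X\in\Gamma(L_b)$, put $a_X(u)=X+\epsilon(u)X\in\Gamma(L(u))$; this is a section of $L$ over $\pi^{-1}(U)$ with $a_X(b)=X$. Then $Da_X=\partial_Z\bigl(X+\epsilon(u)X\bigr)=(\partial_Z\epsilon)(u)\,X$, which at $u=b$ equals $\epsilon_{Z_b}(X)\in\overline{L_b}$, where $\epsilon_{Z_b}:=(\partial_Z\epsilon)|_{u=b}\in\Gamma(\wedge^2 L_b^*)$. Since $\kappa(Z)$ is represented by the cocycle $(a,b)\mapsto\langle Da,b\rangle$ and $a_X(b)=X$, $a_Y(b)=Y$ for $X,Y\in\Gamma(L_b)$, the restriction of this cocycle to $M_b$ sends $(X,Y)$ to $\langle Da_X,a_Y\rangle|_{u=b}=\langle\epsilon_{Z_b}(X),Y\rangle$, which is exactly the $2$-form $\epsilon_{Z_b}$; hence $\kappa_b(Z)=[\epsilon_{Z_b}]\in H^2(L_b)$. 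As $\epsilon_{Z_b}$ is the directional derivative at $b$ of $u\mapsto\epsilon(u)$ in the direction $Z_b$, it depends only on $Z_b$, giving the first assertion; and by the linearization of the Maurer--Cartan equation (\ref{mcgencpx}) recalled in Section~\ref{gksc}, $[\epsilon_{Z_b}]$ is by definition the generalized Kodaira--Spencer class of $\{J(u)\}$ in the direction $Z_b$, giving the second. The main obstacle is the middle step: one must set up the identification of the fiberwise reduction $A/A^\perp$ with $u$-dependent sections of $E_{M_0}$ with enough care to verify that every term of $[\tilde{Z},\,\cdot\,]_H$ other than $\partial_Z$ dies modulo $A^\perp$; granting that, the rest of the argument is formal.
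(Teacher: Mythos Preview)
Your proof is correct and follows essentially the same route as the paper's: pass to a contractible local trivialization with $H$ pulled back from the fiber, identify the derivation $D$ induced by the horizontal lift $\tilde{Z}$ with the Lie derivative $\mathcal{L}_Z$ (your $\partial_Z$) on $u$-dependent sections of $E_{M_0}$, then extend sections of $L_b$ via the graph map $X\mapsto X+\epsilon(u)X$ and read off that the cocycle restricts to $\langle (\partial_Z\epsilon)|_{u=b}\,X,Y\rangle$. Your treatment is in fact slightly more explicit than the paper's in verifying via the Dorfman bracket formula that the $i_{\tilde Z}i_W H$ term vanishes, whereas the paper simply asserts $D(X+\xi)=\mathcal{L}_Z(X+\xi)$.
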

\begin{proof}
Choose a local trivialization $\pi^{-1}(U) = U \times M_b$ as in the statement of the proposition. By restriction we may assume $U$ is contractible so that $H^3(U \times M_b , \mathbb{R}) = H^3(M_b,\mathbb{R})$. It follows that there exists a closed $3$-form $H$ on $M_b$ so that the restriction of $F$ to $U \times M_b$ is isomorphic to $TU \oplus TM_b \oplus T^*M_b \oplus T^*U$ with $H$-twisted Courant bracket. The Courant algebroid $E$ is obtained from $F$ by first restricting to sections of $TM_b \oplus T^*M_b \oplus T^*U$ and then factoring out by sections of $T^*U$. In particular $E$ is isomorphic to the bundle $TM_b \oplus T^*M_b$ pulled back to $U \times M_b$. Let $Z$ be a vector field on $M$ which is a vector field on $U$ by restriction. Then a derivation $D : \Gamma(E) \to \Gamma(E)$ of $E$ associated to $Z$ is given by
\begin{equation*}
D( X + \xi ) = \mathcal{L}_Z( X + \xi),
\end{equation*}
where $X + \xi$ is a section of $E$ thought of as a vector field and $1$-form on $U \times M_b$. Let $a,b$ be sections of $L_b$. Choose smooth extensions $\tilde{a},\tilde{b}$ of $a,b$ to sections of $L$ over $U \times M_b$. Then we find that $\kappa_b(Z)$ is represented by $\epsilon(a,b)$ where
\begin{equation*}
\epsilon(a,b) = \langle \mathcal{L}_Z (\tilde{a}) , \tilde{b} \rangle |_{M_b}.
\end{equation*}
For each $u \in U$ we can think of $L_u$ as a subbundle of $TM_b \oplus T^*M_b$. By suitably restricting $U$ we can then write each $L_u$ as a graph over $L_b$ for some $\lambda_u \in \Gamma( \wedge^2 L_b^*)$, that is $L_u = \{ X + \lambda_u X \, | \, X \in L_b \}$. In particular this allows us to choose the extensions $\tilde{a},\tilde{b}$ as follows $\tilde{a}(u,m) = a(m) + \lambda_u a(m)$, $\tilde{b}(u,m) = b(m) + \lambda_u b(m)$, where $u \in U$ and $m \in M_b$. Noting that $\lambda_b = 0$ we arrive at
\begin{equation*}
\epsilon(a,b)(m) = \langle \mathcal{L}_Z(\lambda_u)|_{u=b} \, a(m) , b(m) \rangle.
\end{equation*}
Note that the cohomology class of $\mathcal{L}_Z \lambda |_{u=b}$ in $H^2(L_b)$ is precisely the generalized Kodaira-Spencer class for the deformation of generalized complex structure in the direction $Z_b$ which clearly only depends on the value of $Z$ at $b$.
\end{proof}

So far we have assumed that $E$ is the fiberwise reduction of an exact Courant algebroid $F$ on $X$. Now we will assume that $(E,J)$ is a holomorphic family of generalized complex structures. Thus there is a generalized complex structure $J'$ on $F$ which induces $J$ on $E$, as in Definition \ref{holofam}.
\begin{proposition}\label{ksmc}
For a holomorphic family of generalized complex structures the Kodaira-Spencer map $\kappa : TB \to H^2(L)$ is complex linear in the sense that
\begin{equation}\label{kappaholo}
\kappa(IX) = i\kappa(X),
\end{equation}
where $I$ is the complex structure on $B$ and $X$ is a vector field on $B$.
\end{proposition}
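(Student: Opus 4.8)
The plan is to produce, for a real vector field $X$ on $B$, compatible cocycle representatives of $\kappa(X)$ and of $\kappa(IX)$ and to verify that they already differ by exactly the factor $i$ at the level of cocycles. Throughout, write $L'$ for the $+i$-eigenbundle of $J'$ on $F_{\mathbb C}$, so that $L$ (the $+i$-eigenbundle of $J$ on $E_{\mathbb C}$) is the image of $A_{\mathbb C}\cap L'$ under the projection $A_{\mathbb C}\to E_{\mathbb C}=A_{\mathbb C}/A_{\mathbb C}^\perp$.

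First I would fix a lift $\tilde X$ of $X$ to a vector field on $M$ and regard it as a section of $F=TM\oplus T^*M$. The key observation is that $-J'\tilde X$ is a natural lift of the vector field $IX$: under $F\to F/A\simeq\pi^*(TB)$ the section $\tilde X$ maps to $\pi^*X$ and $J'$ acts as $-\pi^*(I)$, so $-J'\tilde X$ maps to $\pi^*(IX)$; hence its anchor $\rho(-J'\tilde X)$ is a vector field on $M$ projecting to $IX$, in particular $\pi$-projectable. Consequently $ad_{-J'\tilde X}$ preserves $\Gamma(A)$ and, by axiom (D3), also $\Gamma(A^\perp)$, so it descends to a derivation of $E$ lifting $IX$; it differs from the derivation attached to the vector-field lift $\rho(-J'\tilde X)$ only by an inner derivation, so it computes the same class $\kappa(IX)$ (this class being independent of the choice of derivation lifting $IX$). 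Using that the pairing of $E$ is the one induced from $A\subseteq F$, I then have representatives
\[
\epsilon_X(a,b)=\langle[\tilde X,\hat a]_F,\hat b\rangle,\qquad
\epsilon_{IX}(a,b)=-\langle[J'\tilde X,\hat a]_F,\hat b\rangle
\]
for $\kappa(X)$ and $\kappa(IX)$, where $a,b\in\Gamma(L)$ and $\hat a,\hat b\in\Gamma(A_{\mathbb C})$ are lifts of $a,b$.

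Next I would compute the difference. Decomposing $\tilde X=\tilde X^++\tilde X^-$ along $F_{\mathbb C}=L'\oplus\overline{L'}$ gives the elementary identity $(J'+i)\tilde X=2i\,\tilde X^+$, whence
\[
\epsilon_{IX}(a,b)-i\,\epsilon_X(a,b)=-\langle[(J'+i)\tilde X,\hat a]_F,\hat b\rangle=-2i\,\langle[\tilde X^+,\hat a]_F,\hat b\rangle .
\]
Now choose the lifts so that $\hat a,\hat b\in\Gamma(A_{\mathbb C}\cap L')$ (the value of the cocycles is independent of the choice). Integrability of $J'$ makes $L'$ closed under the Dorfman bracket, so $[\tilde X^+,\hat a]_F\in\Gamma(L')$; and $\rho(\tilde X^+)$ is $\pi$-projectable because $\pi_*\circ\rho$ carries $L'$ into the $+i$-eigenbundle of $-\pi^*(I)$, which is pulled back from $B$, so that $[\rho(\tilde X^+),\rho(\hat a)]$ is vertical and hence $[\tilde X^+,\hat a]_F\in\Gamma(A_{\mathbb C})$. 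Therefore $[\tilde X^+,\hat a]_F\in\Gamma(A_{\mathbb C}\cap L')$ descends to a section of $L$; since $\hat b$ also descends to a section of $L$ and $L$ is isotropic, $\langle[\tilde X^+,\hat a]_F,\hat b\rangle=0$. Hence $\epsilon_{IX}=i\,\epsilon_X$, and in particular $\kappa(IX)=i\,\kappa(X)$.

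The part I expect to require the most care is the bookkeeping for the fiberwise reduction $E=A/A^\perp$: that $ad$ of a $\pi$-projectable section of $F$ descends to a derivation of $E$, that the induced cocycle is computed by $\langle[\,\cdot\,,\hat a]_F,\hat b\rangle$ on lifts into $A_{\mathbb C}$, and that these lifts may be chosen inside $A_{\mathbb C}\cap L'$ — all of which rest on the hypotheses that $A$ (hence $A^\perp$) is $J'$-invariant and that $J'$ descends to the pullback complex structure $-\pi^*(I)$ on $F/A$. Once those are in hand the statement follows from the one-line identity $(J'+i)\tilde X=2i\tilde X^+$ together with the isotropy of $L$; there is no analytic content.
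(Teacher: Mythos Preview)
Your proof is correct and follows essentially the same approach as the paper's: lift $X$ to a section $\tilde X$ of $F$, observe that $-J'\tilde X$ lifts $IX$, decompose $\tilde X$ into its $L'$ and $\overline{L'}$ parts, and use integrability of $J'$ to kill the $L'$ contribution. One small simplification: once you know $[\tilde X^+,\hat a]_F\in\Gamma(L')$ and $\hat b\in\Gamma(L')$, the pairing vanishes immediately by isotropy of $L'$ in $F$, so the detour through $A_{\mathbb C}$ and isotropy of $L$ in $E$ is unnecessary; on the other hand, your explicit remark that $-J'\tilde X$ need not be a pure vector field but still induces a derivation computing $\kappa(IX)$ (since it differs from $ad_{\rho(-J'\tilde X)}$ by an inner derivation) is a point the paper glosses over.
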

\begin{proof}
We may assume $F = TM \oplus T^*M$ with $H$-twisted Courant bracket for some closed $3$-form $H$ on $X$. Let $L' \subset F$ be the $+i$-eigenspace of $J'$. Let $X$ be a vector field on $B$, $\tilde{X}$ a horizontal lift to $X$. Then under the given choice of splitting for $F$ we can think of $\tilde{X}$ as a section of $F$ and thus we get a derivation of $E$ given by $De = [\tilde{X},e]$. We then have that $\kappa(X)$ is represented by the element of $\Gamma(\wedge^2 L^*)$ given by $a,b \mapsto \langle Da,b \rangle$. Now observe that since we have a holomorphic family it follows that $-J' \tilde{X}$ is a section of $F$ which lifts $IX$. Thus $\kappa(IX)$ is represented by $\langle -[J'\tilde{X} , a] , b \rangle$. Let $\tilde{X} = \tilde{X}^{1,0} + \tilde{X}^{0,1}$, where $\tilde{X}^{1,0}$ is a section of $L'$ and $\tilde{X}^{0,1}$ is a section of $\overline{L'}$. Then $-J'\tilde{X} = -i\tilde{X}^{1,0} + i\tilde{X}^{0,1}$ and it follows that $\langle -[J'\tilde{X} ,a],b \rangle = i \langle [\tilde{X},a],b\rangle$ since $\langle [\tilde{X}^{1,0},a],b\rangle = 0$ using integrability of $J'$. The identity (\ref{kappaholo}) immediately follows.
\end{proof}

%%%%%%%%%%%%%%%%%%%%%%%%%%%%%%%%%%%%%%%%%%%%%%%%%%%%%%%%%%%%%%%%%%%%%%%%%%%%%%%%%%%%%%%%%%%%%%%%%%%%%
%%%%%%%%%%%%%%%%%%%%%%%%%%%%%%%%%%%%%%%%%%%%%%%%%%%%%%%%%%%%%%%%%%%%%%%%%%%%%%%%%%%%%%%%%%%%%%%%%%%%%
%%%%%%%%%%%%%%%%%%%%%%%%%%%%%%%%%%%%%%%%%%%%%%%%%%%%%%%%%%%%%%%%%%%%%%%%%%%%%%%%%%%%%%%%%%%%%%%%%%%%%
%%%%%%%%%%%%%%%%%%%%%%%%%%%%%%%%%%%%%%%%%%%%%%%%%%%%%%%%%%%%%%%%%%%%%%%%%%%%%%%%%%%%%%%%%%%%%%%%%%%%%

\section{Variation of Hodge structure}\label{varhodstr}

\subsection{Hodge structures on generalized complex manifolds}\label{hsgcm}

Let $E \to M$ be an exact Courant algebroid on $M$, where $M$ is $2n$-dimensional and $J$ a generalized complex structure on $E$. Suppose $E$ has \v{S}evera class represented by a closed $3$-form $H$, so we can identify $E$ with $TM \oplus T^*M$ equipped with the $H$-twisted Dorfman bracket. The generalized complex structure $J$ induces a decomposition of forms $S = U_{-n} \oplus U_{-n+1} \oplus \cdots \oplus U_{n-1} \oplus U_n$ such that the twisted differential decomposes as $d_H = \partial + \overline{\partial}$, where $\partial$ has degree $-1$ and $\overline{\partial}$ has degree $+1$. Since $\overline{\partial}^2 = 0$, we have corresponding cohomology groups, denoted $H^k_{\overline{\partial}}(M)$.\\

Since $S$ only has a single grading rather than a bi-grading it may seem at first there is no spectral sequence relating cohomology of $\overline{\partial}$ and $d_H$. There is a simple way to get around this fact however. We set $W^{p,q} = \Gamma( U_{p-q})$. Let $\delta : W^{p,q} \to W^{p+1,q}$ be given by $\overline{\partial}$ and let $\delta' : W^{p,q} \to W^{p,q+1}$ be given by $\partial$. Then it is clear that $(W^{*,*},\delta,\delta')$ defines a bi-graded complex. Consider the associated singly graded complex $(W^* , D)$, where $W^k = \oplus_{p+q = k} W^{*,*}$ and $D = \delta + \delta'$. One finds that $W^k = \oplus_{r = k \,( {\rm mod} \,2)} U_r = S^{k+n+\tau}$ and $D = d_H$. The cohomology $H^k(W^*)$ is therefore the twisted cohomology group $H^{k+n+\tau}(M,H) \otimes \mathbb{C}$. The shift in degrees is due to the fact that $U_{-n} = K$ has parity $\tau$. If we filter $W^{p,q}$ by $p$-degree we get a spectral sequence $E_r^{p,q}$ converging to the twisted cohomology of $(M,H)$ and such that $E_1^{p,q} = H_{\overline{\partial}}^{p-q}(M)$. We call this the generalized Fr\"olicher spectral sequence. Alternatively we may filter by $q$-degree to get a second spectral sequence ${E'}_r^{p,q}$ such that (after swapping $p$ and $q$ indices) ${E'}_1^{p,q} = H^{q-p}_{\partial}(M)$. Note however that this spectral sequence (and corresponding filtration on twisted cohomology) is just the complex conjugate of the first.

Associated to the spectral sequence $E_r^{p,q}$ is a decreasing filtration $\hat{F}^p H^k(M,H)$ on twisted cohomology such that $\hat{F}^p H^k(M,H) / \hat{F}^{p+1} H^k(M,H) \simeq E_\infty^{p,q}$. Note that by construction of the spectral sequence $\hat{F}^{a+1} H^{b+2}(M,H) = \hat{F}^a H^b(M,H)$. We will define a modified filtration $F^p H(M)$ by setting
\begin{equation*}
F^{k-2p} H(M) = \overline{\hat{F}^p H^k(M,H)}.
\end{equation*}
The change of indices and conjugation are for later convenience. Note also that we only need a single index, since from the above definition we know $F^pH(M) \subseteq H^{p+n+\tau}(M,H)_\mathbb{C}$. One should think of $F^p H(M)$ as elements of twisted cohomology of parity $p+n+\tau$ mod $2$ and represented as a sum of terms in $U_j$ for $j \le p$. The filtrations are now increasing and take the form:
\begin{align*}
F^{-n}H(M) \subseteq F^{-n+2}H(M) \subseteq &\dots \subseteq F^{n} H(M) = H^{\tau}(M,H)_\mathbb{C}, \\
F^{-n+1}H(M) \subseteq F^{-n+3}H(M) \subseteq &\dots \subseteq F^{n-1} H(M) = H^{\tau+1}(M,H)_\mathbb{C}.
\end{align*}
We are interested in the case that the twisted cohomology can be decomposed into the $\overline{\partial}$-cohomology. For this to happen it is necessary first of all for the spectral sequence to
degenerate at $E_1$, thus $\hat{F}^p H(M) / \hat{F}^{p-2} H(M) = H^p_{\overline{\partial}}(M)$. However this is still not sufficient. We also need that the filtration $F^p H(M)$ induces a Hodge structure on $H^{p+n+\tau}(M,H)$, namely the natural maps $F^pH(M) \oplus \overline{ F^{-p-2} H(M)} \to H^{p+n+\tau}(M,H)_\mathbb{C}$ need to be isomorphisms. If both of these conditions hold then we get induced decompositions
\begin{equation*}
H^{k+n+\tau}(M,H) \otimes \mathbb{C} = \bigoplus_{p = k \, ({\rm mod} \, 2)} H^p_{\overline{\partial}}(M).
\end{equation*}
Thus $H^{\tau}(M,H)$ will have a Hodge structure of weight $n$ and $H^{\tau+1}(M,H)$ a Hodge structure of weight $(n-1)$.\\

Let $M,J$ be a generalized complex manifold. We recall the definition of the $\partial \overline{\partial}$-lemma, also known as the $dd^J$-lemma:
\begin{definition}
We say that $M,J$ satisfies the $\partial \overline{\partial}$-lemma if
\begin{equation*}
Im(\partial) \cap Ker(\overline{\partial}) = Im(\overline{\partial}) \cap Ker(\partial) = Im(\partial \overline{\partial}).
\end{equation*}
\end{definition}
Following the argument used in \cite{dgms}, one finds that the $\partial \overline{\partial}$-lemma is equivalent to the following two conditions:
\begin{enumerate}
\item{the generalized Fr\"olicher spectral sequence degenerates at $E_1$, and}
\item{the induced filtration on twisted cohomology is a Hodge filtration.}
\end{enumerate}
Actually in \cite{dgms} they work with bounded complexes. The corresponding complex here $W^{r,s}$ is not bounded, but it is the case $W^{r,s} = 0$ if $|r-s| > n$, which is sufficient for the argument of \cite{dgms} to carry through.\\

The $\partial \overline{\partial}$-lemma implies the existence of a Hodge decomposition
\begin{equation*}
H^{k+n+\tau}(M,H)_\mathbb{C} = \bigoplus_{p = k \, ({\rm mod} \, 2)} H^p_{\overline{\partial}}(M)
\end{equation*}
in twisted cohomology. In order to consider variations of this Hodge structure we need to know that under suitable conditions the existence of the Hodge decomposition is stable under sufficiently small deformations. In fact, the usual elliptic semi-continuity argument used in the complex setting can be applied to obtain:
\begin{proposition}\label{ddbstab}
Let $\pi : M \to B$ be a locally trivial fiber bundle with compact fibers. Let $E \to M$ be a family of exact Courant algebroids on $M$ obtained by reduction of an exact Courant algebroid $F$ on $M$ and $J$ a generalized complex structure on $E$. Suppose that for some $b \in B$ the generalized complex structure on the fiber $M_b = \pi^{-1}(b)$ satisfies the $\partial \overline{\partial}$-lemma. Then there exists an open neighborhood $U \subseteq B$ of $b$ in $B$ such that for every $u \in U$ the generalized complex structure on the fiber $M_u$ also satisfies the $\partial \overline{\partial}$-lemma.
\end{proposition}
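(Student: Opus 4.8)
The plan is to deduce this from the upper semicontinuity of dimensions of kernels of elliptic operators, following the standard argument for the $\partial\overline{\partial}$-lemma on complex manifolds. Recall from the discussion preceding the proposition that on a fibre $M_u$ the $\partial\overline{\partial}$-lemma is equivalent to the combination of $E_1$-degeneration of the generalized Fr\"olicher spectral sequence and the Hodge property of the induced filtration; I want to repackage this as the vanishing of a single non-negative integer built from dimensions of cohomology groups that are each computed by an elliptic operator. To this end, for a fibre $M_u$ with its induced generalized complex structure $J_u$ (and $H_u = H|_{M_u}$ as a representative of its \v{S}evera class, as in Section \ref{familiesexact}, with the associated operators $\partial,\overline{\partial}$ and decomposition $S_{\mathbb C} = \bigoplus_k U_k$ of Section \ref{gcg}), I would introduce the generalized Bott--Chern and Aeppli cohomology groups
\begin{equation*}
H^k_{BC}(M_u) = \frac{ \ker\partial \cap \ker\overline{\partial} \cap \Gamma(U_k) }{ \mathrm{im}(\partial\overline{\partial}) \cap \Gamma(U_k) }, \qquad
H^k_A(M_u) = \frac{ \ker(\partial\overline{\partial}) \cap \Gamma(U_k) }{ (\mathrm{im}\,\partial + \mathrm{im}\,\overline{\partial}) \cap \Gamma(U_k) }.
\end{equation*}
After fixing an arbitrary fibrewise Riemannian metric on $M$ — possible since the fibres are compact — one gets fibrewise Hodge stars and formal adjoints $\partial^*,\overline{\partial}^*$, and then $H^k_{BC}(M_u)$ and $H^k_A(M_u)$ are realized as kernels of the Bott--Chern and Aeppli Laplacians, fourth-order self-adjoint operators assembled from $\partial,\overline{\partial}$ by the same formulas as in complex geometry. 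These operators are elliptic: the $H\wedge(\cdot)$ term is of order zero and the principal symbols of $\partial,\overline{\partial}$ agree with those of the model case, so the ellipticity proof is unchanged. Since $J$, the grading $\{U_k\}$, the operators $\partial,\overline{\partial}$ and the chosen metric all depend smoothly on $u\in B$, these form smooth families of elliptic operators on the compact fibres, and hence $u\mapsto\dim_{\mathbb C}H^k_{BC}(M_u)$ and $u\mapsto\dim_{\mathbb C}H^k_A(M_u)$ are upper semicontinuous.

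The homological input I would use is the inequality
\begin{equation}\label{bcasc}
\sum_{k=-n}^{n}\bigl( \dim_{\mathbb C} H^k_{BC}(M_u) + \dim_{\mathbb C} H^k_A(M_u) \bigr) \;\ge\; 2\,\dim_{\mathbb C} H^*(M_u,H_u)_{\mathbb C},
\end{equation}
valid for every $u$, with equality if and only if $(M_u,J_u)$ satisfies the $\partial\overline{\partial}$-lemma. This is the analogue, for the present setting, of the Angella--Tomassini characterization of the $\partial\overline{\partial}$-lemma: the bigraded complex $(\Gamma(U_{p-q}),\partial,\overline{\partial})$ satisfies $U_{p-q}=0$ for $|p-q|>n$, so it decomposes in each antidiagonal degree into finitely many ``squares'' and ``zigzags'', and the difference of the two sides of \eqref{bcasc} simply counts the zigzags of length greater than one — the same bounded-bicomplex bookkeeping that, as noted before the proposition, makes the argument of \cite{dgms} go through. (Both sides of \eqref{bcasc} are sums over the two parities of $H^*(M_u,H_u)_{\mathbb C}$; since each left-hand summand already dominates the corresponding twisted-Betti contribution, equality in \eqref{bcasc} forces equality parity by parity.)

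To conclude, I would note that the right-hand side of \eqref{bcasc} is the rank of the flat twisted Gauss--Manin bundle $H^*$ of Section \ref{twgmconn} — defined under exactly the hypotheses in force here, since $E$ is the fibrewise reduction of a globally defined $F$ — hence it is locally constant in $u$, so constant on some neighbourhood of $b$. The left-hand side is a finite sum of upper semicontinuous functions of $u$. Since $M_b$ satisfies the $\partial\overline{\partial}$-lemma, \eqref{bcasc} is an equality at $u=b$; therefore on a sufficiently small neighbourhood $U$ of $b$ the left-hand side is at most its value at $b$, which equals the constant right-hand side, while \eqref{bcasc} supplies the reverse inequality. Hence \eqref{bcasc} is an equality for every $u\in U$, i.e.\ every fibre $M_u$ with $u\in U$ satisfies the $\partial\overline{\partial}$-lemma.

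I expect the only real obstacle to be organizational rather than conceptual: making precise the smooth dependence on $u$ of the family of elliptic operators (for which fixing a fibrewise metric suffices), and checking that the bounded-bicomplex structure theory — already invoked in the paper in the form needed for the equivalence of the $\partial\overline{\partial}$-lemma with $E_1$-degeneration plus the Hodge property — also yields the sharper inequality \eqref{bcasc} together with its equality criterion. Everything else, namely ellipticity of the Bott--Chern and Aeppli Laplacians and upper semicontinuity of kernel dimensions in smooth families, is standard and transplants verbatim from the complex-analytic case.
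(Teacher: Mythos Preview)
Your argument is correct and takes a genuinely different route from the paper's. The paper proceeds in two stages: first it uses upper semicontinuity of $\dim H^k_{\overline{\partial}}(M_u)$ (via the second-order $\overline{\partial}$-Laplacian) together with the Fr\"olicher inequality $\dim H^*(M_u,H_u)\le \sum_k\dim H^k_{\overline{\partial}}(M_u)$ to force $E_1$-degeneration and constancy of the $\overline{\partial}$-Betti numbers on a neighbourhood; second, knowing the $H^k_{\overline{\partial}}(M_u)$ now form smooth subbundles of the Gauss--Manin bundle, it argues that the transversality condition $F^pH(M_u)\cap\overline{F^{-p-2}H(M_u)}=0$ is open. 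Your Angella--Tomassini approach packages both conditions---$E_1$-degeneration and the Hodge property---into the single numerical equality \eqref{bcasc}, so the openness step for the filtration never has to be argued separately. The price is introducing the fourth-order Bott--Chern and Aeppli Laplacians and verifying their ellipticity; the paper gets away with only the $\overline{\partial}$-Laplacian but then needs the extra smooth-subbundle argument.

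One small point of care: your justification of \eqref{bcasc} via the ``squares and zigzags'' decomposition is phrased as though the bicomplex $W^{p,q}=\Gamma(U_{p-q})$ is a bounded double complex of finite-dimensional vector spaces, which it is not---the entries are infinite-dimensional and the indexing is an infinite diagonal strip. The inequality and its equality criterion are nonetheless valid here, but the clean way to see it is either via the Varouchas-type exact sequences linking $H^*_{BC}$, $H^*_A$, $H^*_{\overline{\partial}}$, $H^*_{\partial}$ and $H^*_{d_H}$ (which need only finite-dimensionality of the cohomologies, supplied by ellipticity), or by observing that the genuine grading here is the single $\mathbb{Z}$-grading by $k$ with $U_k=0$ for $|k|>n$, which is enough boundedness for the Angella--Tomassini argument to run. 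You acknowledge this as ``organizational'' and that is fair, but it is worth stating precisely which formulation you invoke.
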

\begin{proof}
Since $E$ arises from reduction of an exact Courant algebroid on $M$, we know by Section \ref{twgmconn} that the twisted cohomology groups $H^*(M_u , H|_{M_u})$ of the fibers have constant dimension and in fact define a flat vector bundle over $B$. Now since $M_u$ satisfies the $\partial \overline{\partial}$-lemma we have that the sums of the dimensions of the $\overline{\partial}$-cohomology groups $H^*_{\overline{\partial}}(M_b)$ agrees with the dimension of $H^*(M_b,H_{M_b})$. Now the $\overline{\partial}$-operator forms an elliptic complex and the $H^*_{\overline{\partial}}(M_u)$ groups are then in bijection with the kernels of the corresponding Laplacians. By elliptic semi-continuity \cite[Theorem 7.3]{kod} there is a neighborhood $U$ of $b$ such that ${\rm dim}( H^k_{\overline{\partial}}(M_u)) \le {\rm dim}(H^k_{\overline{\partial}}(M_b))$ for all $u \in U$. On the other hand by considering the generalized Fr\"olicher spectral sequence we find that
\begin{align*}
{\rm dim}\left( H^0(M_u,H|_{M_u}) \oplus H^1(M_u,H|_{M_u}) \right) &\le {\rm dim}\left( \bigoplus_k H^k_{\overline{\partial}}(M_u) \right) \\
&\le {\rm dim}\left( \bigoplus_k H^k_{\overline{\partial}}(M_b) \right) \\
&= {\rm dim}\left( H^0(M_b,H|_{M_u}) \oplus H^1(M_b,H|_{M_u}) \right) \\
&= {\rm dim}\left( H^0(M_u,H|_{M_u}) \oplus H^1(M_u,H|_{M_u}) \right).
\end{align*}
This requires equalities throughout so for all $u \in U$ we see that the generalized Fr\"olicher spectral sequence degenerates at $E_1$ and ${\rm dim}(H^k_{\overline{\partial}}(M_u)) = {\rm dim}(H^k_{\overline{\partial}}(M_b))$. In addition, since $F^p H(M_b) \cap \overline{F^{-p-2}H(M_b)} = 0$, the same is true for all $u$ sufficiently close to $b$, since we can view twisted cohomology as defining a smooth vector bundle over $U$ and the $F^p H(M_u)$ are smoothly varying subbundles. From this it immediately follows that for all $u$ sufficiently close to $b$ the conditions equivalent to the $\partial \overline{\partial}$-lemma hold.
\end{proof}

Before we proceed to consider variations of Hodge structure, let us consider what maps between generalized complex manifolds preserve Hodge structures. We obtain a result analogous to the complex case \cite[Theorem 2.4]{gr1}. First we need to explain the notion of Poincar\'e dual classes in twisted cohomology.

Let $M$ be a compact oriented manifold and $H$ a closed $3$-form on $M$. Suppose we have a pair $(N,B)$ consisting of a submanifold $N$ and $2$-form $B$ on $N$ such that $H|_N = dB$. In this case we get a linear map $\lambda_{N,B} : H^*(M,H) \to \mathbb{R}$ as follows. Let $\omega$ be a $d_H$-closed form on $M$. Then we define $\lambda_{N,B}(\omega) = \int_N e^B \omega$, where the integration denotes an integration over the top degree part of $e^B (\omega|_N)$. One checks that this map evaluates to zero on $d_H$-exact classes and so passes to a linear map on twisted cohomology. The Mukai pairing $\langle \, , \, \rangle$ defines a non-degenerate pairing on twisted cohomology $Q : H^*(M,H) \otimes H^*(M,H) \to \mathbb{R}$ and so there exists a class $\phi_{N,B} \in H^*(M,H)$ which is dual to $\lambda_{N,B}$. Explicitly this means that for any class $\omega \in H^*(M,H)$ we have
\begin{equation*}
\lambda_{N,B}(\omega) = Q( \omega , \phi_{N,B}) = \int_M \langle \omega , \phi_{N,B} \rangle.
\end{equation*}
One may think of $\lambda_{N,B}$ as a sort of twisted fundamental class and $\phi_{N,B}$ as the Poincar\'e dual.\\

Let $(M,J,H),(M',J'.H')$ be generalized complex manifolds satisfying the $\partial \overline{\partial}$-lemma. Let $f : M \to M'$ be a diffeomorphism and $B$ a $2$-form on $M$ such that $H = f^*(H') + dB$. Such a pair $(f,B)$ gives rise to a pullback map $(f,B)^* : H^*(M',H') \to H^*(M,H)$ which sends a $d_{H'}$-closed form $\omega$ to $e^{-B}f^*(\omega)$. We give a characterization of pairs $(f,B)$ which preserve Hodge structures. Let $p,p'$ denote the projections from $M \times M'$ to $M,M'$. We then have an isomorphism $H^*(M,H) \otimes H^*(M',-H') \to H^*(M\times M' , p^*(H)-p'^*(H'))$ which sends $\omega \otimes \omega'$ to $p^*(\omega) \wedge p'^*(\omega')$. To see that this is an isomorphism one only needs to observe that there is a similar tensor product decomposition at the level of differential forms from which the decomposition in cohomology follows. Let $\sigma : H^*(M',H') \to H^*(M',-H')$ be the isomorphism in Section \ref{twgmconn}. We then get a $\mathbb{Z}$-graded decomposition $H^*(M\times M' , p^*(H)-p'^*(H')) = \bigoplus_{k} H^k$ by setting
\begin{equation}\label{productdecomp}
H^k = \bigoplus_{i+j=k} H^i_{\overline{\partial}}(M) \otimes \sigma( H^j_{\overline{\partial}}(M)).
\end{equation}
Let $\sigma'$ denote the involution of $TM' \oplus T^*M'$ which acts as the identity on $TM$ and as $-1$ on $T^*M'$. Then $J'_{\sigma'} = \sigma' J' \sigma'$ is a generalized complex structure on $M'$ and one can show that the decomposition $H^*(M\times M' , p^*(H)-p'^*(H')) = \bigoplus_{k} H^k$ with $H^k$ defined in (\ref{productdecomp}) is the decomposition corresponding to the generalized complex structure $p^*(J) \oplus p'^*(J'_{\sigma'})$ on $M \times M'$. Now let $N = \{ (m,m') \in M \times M' \, | \, m' = f(m) \}$ be the graph of $f$. Let $H_0 = p^*(H) - p'^*(H')$ and $B_0 = p^*(B)|_N$. Then it follows that $H_0|_N = dB_0$, so the pair $(N,B_0)$ defines a Poincar\'e dual class $\phi_{N,B_0} \in H^*(M \times M' , H_0)$.
\begin{proposition}
The map $(f,B)^* : H^*(M',H') \to H^*(M,H)$ preserves Hodge decompositions (in the sense that $(f,B)^*$ sends $H^k_{\overline{\partial}}(M')$ to $H^k_{\overline{\partial}}(M)$) if and only if $\phi_{N,B_0} \in H^0$ where $H^0$ is defined in (\ref{productdecomp}).
\end{proposition}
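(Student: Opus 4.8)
The plan is to identify the Poincar\'e dual class $\phi_{N,B_0}$ with the ``correspondence class'' of the linear map $(f,B)^*$ under the K\"unneth isomorphism, and then to extract the statement from the grading (\ref{productdecomp}). Write $X = M \times M'$, and let $Q_M$, $Q_{M'}$, $Q_X$ denote the Mukai pairings $(\,,\,)$ on the twisted cohomologies of $M$, $M'$, $X$ respectively; all three are non-degenerate since the underlying manifolds are compact and oriented. The K\"unneth decomposition $H^*(X,H_0) \simeq H^*(M,H) \otimes H^*(M',-H')$ (as for differential forms), composed with the isomorphism $\sigma : H^*(M',H') \to H^*(M',-H')$, shows that the classes $p^*\beta \wedge p'^*\sigma(\omega')$ with $\beta$ a $d_H$-closed form on $M$ and $\omega'$ a $d_{H'}$-closed form on $M'$ span $H^*(X,H_0)$.

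The first step is to prove that, for all such $\beta$ and $\omega'$,
\begin{equation}\label{keycorr}
Q_X\bigl( p^*\beta \wedge p'^*\sigma(\omega') ,\ \phi_{N,B_0} \bigr) = Q_M\bigl( \beta ,\ (f,B)^*\omega' \bigr).
\end{equation}
Since $p^*\beta \wedge p'^*\sigma(\omega')$ is $d_{H_0}$-closed, $\lambda_{N,B_0}$ is defined on it, and as $p|_N : N \to M$ is a diffeomorphism with inverse $m \mapsto (m,f(m))$ — under which $B_0$ pulls back to $B$ and the restriction of $p'$ pulls back to $f$ — one gets $\lambda_{N,B_0}( p^*\beta \wedge p'^*\sigma(\omega') ) = \int_M [\, e^B \wedge \beta \wedge f^*\sigma(\omega') \,]^{2n}$. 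Now $e^B$ has even degree, so it commutes with $\beta$ under $\wedge$; also $f^* \sigma = \sigma f^*$; and an elementary degree count yields $\sigma(e^{-B} \wedge \gamma) = e^B \wedge \sigma(\gamma)$ (this identity is equivalent to the $B$-field invariance of the Mukai pairing). Hence $e^B \wedge f^*\sigma(\omega') = e^B \wedge \sigma(f^*\omega') = \sigma(e^{-B} f^*\omega') = \sigma\bigl( (f,B)^*\omega' \bigr)$, so the integral equals $\int_M \langle \beta , (f,B)^*\omega' \rangle = Q_M(\beta, (f,B)^*\omega')$. On the other hand $\lambda_{N,B_0}(\eta) = Q_X(\eta, \phi_{N,B_0})$ by the defining property of the Poincar\'e dual, and (\ref{keycorr}) follows.

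Next I use (\ref{keycorr}) together with the block structure of the pairings. Recall that the Mukai pairing pairs $U_j$ with $U_k$ trivially unless $j+k=0$; hence, using non-degeneracy, $Q_M$ restricts to a non-degenerate pairing on each $H^i_{\overline{\partial}}(M) \otimes H^{-i}_{\overline{\partial}}(M)$ and vanishes on $H^i_{\overline{\partial}}(M) \otimes H^{i'}_{\overline{\partial}}(M)$ when $i+i' \ne 0$; likewise, applying this to the generalized complex structure $p^*J \oplus p'^*(J'_{\sigma'})$ on $X$ whose $U$-grading realizes (\ref{productdecomp}), $Q_X$ restricts to a non-degenerate pairing on each $H^m \otimes H^{-m}$ and vanishes on $H^a \otimes H^b$ when $a+b \ne 0$. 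Fix $k$ and decompose $\phi_{N,B_0} = \sum_a \phi^{(a)}$ with $\phi^{(a)} \in H^a$. Then $(f,B)^*$ sends $H^k_{\overline{\partial}}(M')$ to $H^k_{\overline{\partial}}(M)$ if and only if $Q_M(\beta, (f,B)^*\omega') = 0$ for all $\omega' \in H^k_{\overline{\partial}}(M')$ and all $\beta \in H^j_{\overline{\partial}}(M)$ with $j \ne -k$ (by the block structure and non-degeneracy of $Q_M$), and by (\ref{keycorr}) this equals $Q_X(p^*\beta \wedge p'^*\sigma(\omega'), \phi_{N,B_0})$, which, as $p^*\beta \wedge p'^*\sigma(\omega') \in H^{j+k}$, reduces to $Q_X(p^*\beta \wedge p'^*\sigma(\omega'), \phi^{(-(j+k))})$. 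Since for fixed $m = j+k \ne 0$ the classes $p^*\beta \wedge p'^*\sigma(\omega')$ span $H^m$, the conjunction over all $k$ of these vanishing conditions is equivalent, by non-degeneracy of $Q_X$ on $H^m \otimes H^{-m}$, to $\phi^{(-m)} = 0$ for every $m \ne 0$, that is, to $\phi_{N,B_0} \in H^0$. This proves both implications.

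The main difficulty is the first step: verifying that the twisted fundamental class of the graph of $f$ genuinely represents the map $(f,B)^*$. The delicate points there are the bookkeeping of the $B$-field transforms and of the involution $\sigma$ (via $\sigma(e^{-B} \wedge \gamma) = e^B \wedge \sigma(\gamma)$ and $f^* \sigma = \sigma f^*$), and the orientation sign introduced by the diffeomorphism $p|_N : N \to M$; since these signs do not affect membership in $H^0$, the final statement is clean. Once (\ref{keycorr}) is established, the remainder is formal, using only the non-degeneracy of the Mukai pairing and its block-diagonality with respect to the $U$-grading.
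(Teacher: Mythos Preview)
Your proof is correct and follows essentially the same route as the paper's: establish the correspondence identity $Q_X(p^*\beta \wedge p'^*\sigma(\omega'),\phi_{N,B_0}) = Q_M(\beta,(f,B)^*\omega')$ by pulling back along the graph, then use the block-diagonality and non-degeneracy of the Mukai pairing with respect to the $U$-grading to conclude. You are simply more explicit than the paper about the intermediate identities ($\sigma(e^{-B}\gamma)=e^{B}\sigma(\gamma)$, $f^*\sigma=\sigma f^*$) and about the non-degeneracy step, but there is no substantive difference in strategy.
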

\begin{proof}
Consider a class in $H^*(M \times M' , H_0)$ of the form $p^*(\alpha) \wedge \sigma(p'^*\beta)$ where $\alpha \in H^*(M,H)$, $\beta \in H^*(M',H')$. We know that such classes span $H^*(M\times M' , H_0)$. By definition of $\phi_{N,B_0}$ we have
\begin{align*}
Q( p^*(\alpha) \wedge \sigma(p'^*\beta) , \phi_{N,B}) &= \int_N e^{B_0} p^*(\alpha) \wedge \sigma( p'^*(\beta)) \\
&= \int_M \alpha \wedge \sigma( e^{-B} f^*(\beta) ) \\
&= \int_M \langle \alpha , (f,B)^* \beta \rangle.
\end{align*}
Now $\phi_{N,B_0}$ has degree $0$ if and only if $Q( p^*(\alpha) \wedge \sigma(p'^*\beta) , \phi_{N,B} ) = 0$ for all $\alpha$ and $\beta$ whose degrees do not sum to zero. One the other hand $\int_M \langle \alpha , (f,B)^* \beta \rangle = 0$ for all $\alpha$ and $\beta$ whose degrees do not sum to zero if and only if $(f,B)^* \beta$ has the same degree as $\beta$, which is exactly the condition for $(f,B)^*$ to preserve Hodge structures.
\end{proof}

%%%%%%%%%%%%%%%%%%%%%%%%%%%%%%%%%%%%%%%%%%%%%%%%%%%%%%%%%%%%%%%%%%%%%%%%%%%%%%%%%%%%%%%%%%%%%%%%%%%%%
%%%%%%%%%%%%%%%%%%%%%%%%%%%%%%%%%%%%%%%%%%%%%%%%%%%%%%%%%%%%%%%%%%%%%%%%%%%%%%%%%%%%%%%%%%%%%%%%%%%%%

\subsection{Variation of Hodge structure}\label{vhs}

Let $\pi : M \to B$ be a locally trivial fiber bundle with compact fibers. Let $E \to M$ be a family of exact Courant algebroids obtained by fiberwise reduction of an exact Courant algebroid $F \to M$ on $M$. Let $J$ be a generalized complex structure on $E$. Assume in addition that the fibers of $\pi$ satisfy the $\partial \overline{\partial}$-lemma with respect to their induced generalized complex structure. We will refer to such a family $(M,B,\pi,E,J)$ as a {\em good family}. Note that if $M_0$ is a compact generalized complex manifold satisfying the $\partial \overline{\partial}$-lemma then by Proposition \ref{ddbstab} we know that all sufficiently small families of deformations of $M_0$ are good families.

We have that $E$ is the fiberwise reduction of an exact Courant algebroid $F$ on $M$. For definiteness we take $F = TM \oplus T^*M$ with $H$-twisted Courant bracket for some closed $3$-form $H \in \Omega^3(M)$. The twisted cohomology groups of the fibers of a good family form a local system $\mathcal{H}^*$, which defines a corresponding flat vector bundle $(H^* , \nabla)$, so $\mathcal{H}^*$ is the sheaf of constant sections of $H^*$. As in Section \ref{twgmconn}, we call $\nabla$ the twisted Gauss-Manin connection. Since the fibers of $\pi$ satisfy the $\partial \overline{\partial}$-lemma we know that on each fiber there is a Hodge structure
\begin{equation*}
H^{k+n+\tau}(M_b)_\mathbb{C} = \bigoplus_{p = k \, ({\rm mod} \, 2)} H^p_{\overline{\partial}}(M_b),
\end{equation*}
where $M_b = \pi^{-1}(b)$, $2n$ is the dimension of the fibers and $\tau$ is the parity of the generalized complex structure on the fibers. As in the proof of Proposition \ref{ddbstab} we know that the dimensions of the $H_{\overline{\partial}}^k(M_b)$ groups are locally constant on $b$. For simplicity we will assume $B$ is connected, so the dimension of each group $H^k_{\overline{\partial}}(M_b)$ is constant. By standard elliptic theory \cite[Theorem 7.4]{kod} we know that the assignment $b \mapsto H^k_{\overline{\partial}}(M_b)$ is a smoothly varying subbundle of $H^*$ which we denote by $H^k_{\overline{\partial}}$. The Hodge decomposition can be written as
\begin{equation*}
H^{k+n+\tau}_\mathbb{C} = \bigoplus_{p = k \, ({\rm mod} \, 2)} H^p_{\overline{\partial}}.
\end{equation*}
We note that $\overline{H^k_{\overline{\partial}}} = H^{-k}_{\overline{\partial}}$. We also note that the Mukai pairing $Q : H^* \otimes H^* \to \mathbb{R}$ upon complexification induces a duality pairing between $H^k_{\overline{\partial}}$ and $H^{-k}_{\overline{\partial}}$. We have a Hodge filtration on each fiber and this defines a corresponding filtration on $H^*$. Namely for $p = k \, ({\rm mod} \, 2)$ we set 
\begin{equation*}
F^p H = \bigoplus_{k \le p, \, \, k = p \, ({\rm mod} \, 2)} H^k_{\overline{\partial}}.
\end{equation*}
On any given fiber $M_b$ this is just the previously defined Hodge filtration $F^pH(M_b)$ for $H^*(M_b,H_b)$. We now show that the analogue of Griffiths transversality \cite{gr2} holds:
\begin{proposition}\label{grtrans}
If $s$ is a section of $F^pH$ and $\nabla$ is the twisted Gauss-Manin connection then $\nabla s$ is a section of $T^*B \otimes F^{p+2}H$. We therefore get an induced bundle map $\tau : F^{p}H / F^{p-2}H \to T^*B \otimes F^{p+2}H / F^{p}H$, that is a bundle map $\tau : TB \otimes H^{p}_{\overline{\partial}} \to H^{p+2}_{\overline{\partial}} $. Let $\kappa : \Gamma(TB) \to H^2(L)$ be the generalized Kodaira-Spencer map. For each $Z \in T_bB$ we have a generalized Kodaira-Spencer class $\kappa_b(Z) \in H^2(L_b)$, where $L_b$ is the $+i$-eigenbundle of the generalized complex structure on the fiber $M_b$. Then $\tau(Z) : H^p_{\overline{\partial}}(M_b) \to H^{p+2}_{\overline{\partial}}(M_b)$ is the cup product with $\kappa_b(Z)$.
\end{proposition}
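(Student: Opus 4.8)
\emph{Proof plan.} The plan is to reduce everything to a local computation resting on the explicit formula (\ref{tgmformula}) for the twisted Gauss--Manin connection together with the spinorial description of the grading $S_{\mathbb{C}} = \bigoplus_k U_k$.

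First I would localize. Fix $b \in B$, a contractible neighbourhood $U$ of $b$ and a trivialization $\pi^{-1}(U) \simeq U \times M_0$ with $M_0 = M_b$; after a $B$-field transformation we may assume the closed $3$-form $H$ representing the \v{S}evera class is pulled back from $M_0$, so that $d_H$ acting fiberwise is independent of $u \in U$ while the splitting $d_H = \partial_u + \overline{\partial}_u$ and the grading $U_\bullet(u)$ depend on $u$ through the varying generalized complex structure $J_u$. Given a smooth local section $s$ of $H^p_{\overline{\partial}}$, the $\partial\overline{\partial}$-lemma on the fibers of the good family, together with the standard elliptic theory already used for Proposition~\ref{ddbstab}, allows us to choose a smooth family $u \mapsto \eta(u)$ of $d_H$-closed forms with $\eta(u) \in \Gamma\big( \bigoplus_{k\le p} U_k(u) \big)$ whose top component $\eta_p(u) = \pi_p(u)\eta(u)$ represents $s(u)$ in $H^p_{\overline{\partial}}(M_u)$. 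Viewing $\{\eta(u)\}$ as a form $\tilde{s}$ on $U \times M_0$ with no base-differential components, formula (\ref{tgmformula}) gives $(\nabla_Z s)(b) = [(i_Z d_H\tilde{s})|_{M_b}]$; since the normalization of $H$ forces $d_H\tilde{s} = \sum_i du^i \wedge \partial_{u^i}\tilde{s}$ (the purely fiberwise part of $d_H\tilde s$ vanishing because each $\eta(u)$ is $d_H$-closed), this is exactly $[\dot{\eta}]$ with $\dot{\eta} = \partial_Z\eta(u)|_{u=b}$.

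The core step is to locate the components of $\dot{\eta}$ in the grading $\{U_k(b)\}$. The key observation is that $\dot{J} = \partial_Z J_u|_{u=b}$ anticommutes with $J$, so, regarded as a section of $\wedge^2 E$ via the pairing, it lies in $\wedge^2 L \oplus \wedge^2\overline{L}$, and hence its Clifford action on $S_{\mathbb{C}}$ shifts the grading by exactly $\pm 2$. Differentiating the identities $\pi_j(u)\eta(u) = 0$ (for $j > p$) at $u = b$, and evaluating the derivatives $\dot{\pi}_j$ of the spectral projections $\pi_j(u):S_{\mathbb{C}}\to U_j(u)$ of (the Clifford action of) $J_u$ — for instance via $\dot{\pi}_j = \tfrac{1}{2\pi i}\oint (\zeta - J)^{-1}\dot{J}(\zeta - J)^{-1}\,d\zeta$ — shows that $(\dot{\eta})_j = 0$ for $j > p+2$ and that $(\dot{\eta})_{p+2} = \tfrac{1}{2i}\big(\dot{J}\cdot\eta_p(b)\big)_{p+2}$. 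The vanishing for $j > p+2$ gives $\dot{\eta} \in \Gamma\big(\bigoplus_{k\le p+2} U_k(b)\big)$, hence $[\dot\eta] \in F^{p+2}H$, which is Griffiths transversality $\nabla(F^pH) \subseteq T^*B \otimes F^{p+2}H$; a general section of $F^pH$ is handled by decomposing it into sections of the $H^k_{\overline{\partial}}$ with $k \le p$. The induced bundle map $\tau$ exists for the usual reason: $\nabla$ is tensorial in $Z$, and in $\nabla_Z(fs) = Z(f)\,s + f\nabla_Z s$ the term $Z(f)\,s$ lies in $F^pH$ and hence dies modulo $F^pH$.

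It remains to identify $\tau(Z)$. Under the isomorphism $F^{p+2}H/F^pH \simeq H^{p+2}_{\overline{\partial}}$ which reads off the top component, $\tau(Z)(s(b))$ is the $\overline{\partial}$-class of $(\dot{\eta})_{p+2} = \tfrac{1}{2i}\big(\dot{J}\cdot\eta_p(b)\big)_{p+2}$. Under $S_{\mathbb{C}} \simeq \wedge^* L^* \otimes K$ the $\wedge^2\overline{L}$-part of the Clifford action of $\dot{J}$ is exterior multiplication by the element $\dot{J}|_L \in \wedge^2 L^*$; writing the fiberwise deformation along $Z$ as a graph $L_u = \{X + \lambda_u X : X \in L\}$ with $\lambda_b = 0$ as in (\ref{ieigdef}), differentiating the relation $J_u(X + \lambda_u X) = i(X + \lambda_u X)$ at $u=b$ gives $\dot{J}|_L = 2i\,\partial_Z\lambda_u|_{u=b}$, and $\partial_Z\lambda_u|_{u=b}$ is precisely a representative of the generalized Kodaira--Spencer class $\kappa_b(Z)$ by the Proposition of Section~\ref{gksc} identifying $\kappa_b(Z)$ with the generalized Kodaira--Spencer class of the fiberwise deformation in the direction $Z$. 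Hence $(\dot{\eta})_{p+2} = \partial_Z\lambda_u|_{u=b} \wedge \eta_p(b)$, whose $\overline{\partial}$-class is the cup product $\kappa_b(Z) \smile [\eta_p(b)] = \kappa_b(Z)\smile s(b)$, as desired. The step I expect to be the main obstacle is this last identification: carefully reconciling the Maurer--Cartan description of the generalized Kodaira--Spencer class in $\wedge^2 L^*$, the derivation-defined class $\kappa$ of Section~\ref{gksc}, and the infinitesimal tilt of the spinor grading, and pinning down every factor of $i$ and every Clifford-algebra sign convention so that the proportionality constant in $\tau(Z)(s) = \kappa_b(Z)\smile s$ is exactly $1$; the analytic input, a smoothly varying choice of $\partial\overline{\partial}$-lemma representatives $\eta(u)$, is routine given the Hodge-theoretic tools already invoked.
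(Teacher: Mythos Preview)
Your proposal is correct and follows essentially the same route as the paper's proof: localize, choose smoothly varying $d_H$-closed representatives, differentiate the condition that pins down the grading, and identify the top component with the Clifford action of the Kodaira--Spencer representative via $\dot J = 2i\epsilon - 2i\overline{\epsilon}$. The only notable difference is packaging: the paper exploits the $\partial\overline{\partial}$-lemma to choose the representative $\tilde a_b$ to lie in the \emph{single} eigenbundle $U_p(b)$ for every $b$, then differentiates the eigenvalue equation $J(b)\tilde a_b = -ip\,\tilde a_b$ directly, which is marginally simpler than your route through $F^p$-valued representatives and the Cauchy-integral formula for $\dot\pi_j$; but the two computations are equivalent and your version goes through without issue.
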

\begin{proof}
Since the statements to prove are local with respect to the base we may by restriction assume $B$ is contractible, choose a trivialization $M = B \times M_0$ and choose a closed $3$-form $H \in \Omega^3(M_0)$ representing the \v{S}evera class along the fibers. 

Let $a$ be a section of $H^p_{\overline{\partial}}$. Then we can find a differential form $\tilde{a}$ on $B \times M$ such that $\tilde{a}$ contracted with vector fields on $B$ is zero and such that the restriction of $\tilde{a}$ to the fiber $M_b$ is a $d_H$-closed section of $U_p$, hence $\partial$- and $\overline{\partial}$-closed as well.

Choose local coordinates $b^1, \dots , b^s$ on $B$ and consider the behavior of the generalized complex structure $J = J(b)$ around $b=0$. As a vector bundle the Courant algebroid $E$ is $TM_0 \oplus T^*M_0$ (pulled back to $B \times M_0$) and we can think of $J$ as a generalized complex structure on $TM_0 \oplus T^*M_0$ that varies with $b \in B$. We can expand $J(b)$ in the form $J(b) = J(0) + b^j J_j + \dots$, where the additional terms are higher order in $b$. Similarly the $+i$-eigenspace $L_b$ of $J(b)$ varies with $b$ and, as usual restricting the base if necessary, we can write $L_b$ as a graph $L_b = \{ X  + \epsilon(b)X \, | \, X \in L_0 \}$ where for each $b$, $\epsilon$ is a section of $\wedge^2 L_0^*$ on $M_0$. If we similarly expand $\epsilon$ like $\epsilon(b) = b^j \epsilon_j + \dots$ then $\epsilon_j$ is a representative for the generalized Kodaira-Spencer class $\kappa_0( \partial/ \partial b^j)$. By straightforward algebra one sees that
\begin{equation}\label{jjandks}
J_j = 2i \epsilon_j - 2i \overline{\epsilon_j}.
\end{equation}
Recall that $J$ can be viewed as a section of $\wedge^2 E$ and therefore has a Clifford action on differential forms. Under this action $U_p$ is the $-ip$-eigenspace of $J$ Now the fact that $\tilde{a}$ is a section of $U_p$ for all $b \in B$ gives us $J(b)\tilde{a}_b = -ip \tilde{a}_b$, where we write $\tilde{a}_b$ for the restriction of $\tilde{a}$ to the fiber over $b$. Let us now expand $\tilde{a}_b$ as $\tilde{a}_b = \tilde{a}_0 + b^j \tilde{a}_j + \dots$. Let us note that according to Equation (\ref{tgmformula}) we have $(\nabla_{\partial/\partial b^j} a)(0) = [\tilde{a}_j]$, where $\nabla$ is the twisted Gauss-Manin connection. Collecting $b^j$-coefficients we get
\begin{equation}\label{inftrel}
J_j \tilde{a}_0 + J(0) \tilde{a}_j = -ip \tilde{a}_j.
\end{equation}
We note that $\epsilon_j$ sends $U_p$ to $U_{p+2}$ and $\overline{\epsilon_j}$ sends $U_p$ to $U_{p-2}$. By (\ref{jjandks}) we have that $J_j \tilde{a}_0$ consists of a term in $U_{p+2}$ and a term in $U_{p-2}$. Feeding this in to (\ref{inftrel}) together with the fact that $J(0)$ acts like $-ik$ on $U_k$, we see that $\tilde{a}_j$ has only components of degrees $p-2,p$ and $p+2$. From this we immediately see that $\nabla$ sends $F^pH$ to $T^*B \otimes F^{p+2}H$. Next, to work out the induced map $\tau : F^{p}H / F^{p-2}H \to T^*B \otimes F^{p+2}H / F^{p}H$ we need only determine the degree $p+2$ part of $\tilde{a}_j$. From (\ref{jjandks}) and (\ref{inftrel}) we find that it is given by $\epsilon_j \tilde{a}_0$. Now since $[\tilde{a}_0] = a(0)$ and $\epsilon_j$ represents $\kappa_0(\partial/\partial b^j)$ we get that this equals $\kappa_0(\partial/\partial b^j) a(0)$ as claimed.
\end{proof}
\begin{remark}
From the above proof we also see that for any vector field $X$, $\nabla_X$ sends $H^p_{\overline{\partial}}$ to $H^{p-2}_{\overline{\partial}} \oplus H^p_{\overline{\partial}} \oplus H^{p+2}_{\overline{\partial}}$. The parts of $\nabla$ that raise or lower degree can be expressed using the generalized Kodaira-Spencer class and its conjugate.
\end{remark}

Let $(M,B,\pi,E,J)$ be a good family. We now consider period mappings for such families. Fix a basepoint $0 \in B$ and let $M_0 = \pi^{-1}(0)$. Fix an integer $p$ and consider the subbundle $F^pH \subseteq H^{p+n+\tau}_\mathbb{C}$. Let $f^p = {\rm dim}_{\mathbb{C}}(F^p H)$. Using the twisted Gauss-Manin connection $\nabla$ we can parallel translate the fibers of $F^pH$ back to the fiber of $H^{p+n+\tau}_\mathbb{C}$ at $0$. If $\tilde{B}$ is the universal cover of $B$ then this process of parallel translation determines a period mapping (c.f. \cite{gr2}):
\begin{equation*}
\mathcal{P}^{p} : \tilde{B} \to {\rm Grass}( f^p , H^{p+n+\tau}(M_0,H_0)_\mathbb{C} ),
\end{equation*}
where ${\rm Grass}( r , V )$ denotes the Grassmannian of $r$-dimensional complex subspaces of a complex vector space $V$.
\begin{remark}
That the period map $\mathcal{P}^{p}$ is defined on the universal cover $\tilde{B}$ in general rather than on $B$ itself reflects possible non-trivial monodromy of the twisted Gauss-Manin connection. Let $\rho : \pi_1(B) \to GL( H^*(M_0,H_0) )$ be the monodromy of the twisted Gauss-Manin connection. Then on viewing $\tilde{B}$ as a principal $\pi_1(B)$-bundle over $B$ we have that the period map satisfies $\mathcal{P}^{p}( x \gamma) = \rho(\gamma)^{-1} \mathcal{P}^{p}(x)$ for all $x \in \tilde{B}$ and $\gamma \in \pi_1(B)$.

We have considered here period maps obtained by considering  single term $F^p H$ of the filtration on twisted cohomology. By considering multiple terms we get period maps into more general flag varieties. However these maps are determined by the $\mathcal{P}^{p}$, so we restrict attention to these.
\end{remark}

Let $V$ be a complex vector space and $W$ be a complex $r$-dimensional subspace, so $W \in {\rm Grass}(r,V)$. Choose a complimentary subspace $W'$ so that $V = W \oplus W'$. Then an open neighborhood of $W$ in ${\rm Grass}(r,V)$ is given by those subspaces which can be written as a graph $\{ X + AX \, | \, X \in W \}$, where $A \in {\rm Hom}(W,W') \simeq {\rm Hom}(W,V/W)$. It follows that we can identify the tangent space $T_W {\rm Grass}(r,V)$ with ${\rm Hom}(W,V/W)$ and this identification is in fact independent of the choice of complement $W'$. Based on this identification we can describe the differential $d\mathcal{P}^{p}$ of the period map. Indeed the differential can be thought of as a map $\phi : TB \to {\rm Hom}( F^pH , H^{p+n+\tau}_\mathbb{C} / F^p H)$. To get the actual differential $d\mathcal{P}^{p}$ we use parallel translation to transport the various subspaces back to the basepoint. It is clear that $\phi$ can be described directly using the twisted Gauss-Manin connection as follows: if $s$ is a section of $F^p H$ and $X$ a vector field on $B$ then $\phi(X)s = \nabla_X s \, ({\rm mod} \, F^pH)$. Now as in Proposition \ref{grtrans} we know for each vector field $X$, $\phi(X)$ is actually valued in ${\rm Hom}( F^pH , F^{p+2}H / F^p H)$ and is induced from an element in ${\rm Hom}( F^pH / F^{p-2}H, F^{p+2} H / F^p H) = {\rm Hom}( H^p_{\overline{\partial}} , H^{p+2}_{\overline{\partial}} )$, namely the product with the generalized Kodaira-Spencer class. We have thus found an expression for the differential of the period maps.

\begin{proposition}\label{pmholo}
If the family of generalized complex structures $(E,J)$ is a holomorphic family then the period maps $\mathcal{P}^{p}$ are holomorphic.
\end{proposition}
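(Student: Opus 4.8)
The plan is to invoke the standard fact that a smooth map between complex manifolds is holomorphic precisely when its differential is everywhere complex linear, and then to combine the explicit description of $d\mathcal{P}^{p}$ obtained just above with the complex linearity of the generalized Kodaira--Spencer map from Proposition \ref{ksmc}.

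First recall the set-up: since $(E,J)$ is a holomorphic family, $B$ carries the complex structure $I = J_B$, and $\tilde{B}$ inherits this complex structure, so (holomorphicity being local) it suffices to check that $d\mathcal{P}^{p}$ is complex linear. The target ${\rm Grass}(f^p , V)$, with $V = H^{p+n+\tau}(M_0,H_0)_{\mathbb{C}}$, is a complex manifold whose tangent space at a point $W$ is ${\rm Hom}(W , V/W)$ with complex structure multiplication by $i$ (as $W$ and $V/W$ are complex). Because the twisted Gauss--Manin connection $\nabla$ is a real flat connection, its complexification has $\mathbb{C}$-linear parallel transport; hence the parallel translation $P_b : H^{p+n+\tau}(M_b,H_b)_{\mathbb{C}} \to H^{p+n+\tau}(M_0,H_0)_{\mathbb{C}}$ used to define $\mathcal{P}^{p}$ carries the complex subspace $F^{p}H|_b$ to a complex subspace, and conjugation by $P_b$ is a $\mathbb{C}$-linear isomorphism identifying $T_{\mathcal{P}^{p}(b)}{\rm Grass}$ with ${\rm Hom}(F^{p}H|_b , H^{p+n+\tau}(M_b,H_b)_{\mathbb{C}}/F^{p}H|_b)$. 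Under this identification $d\mathcal{P}^{p}_b$ becomes exactly the map $\phi$ described above, $\phi(X)s = \nabla_X s \ ({\rm mod}\ F^{p}H)$ for $s$ a section of $F^{p}H$. So $\mathcal{P}^{p}$ is holomorphic if and only if $\phi(IX) = i\,\phi(X)$ for every tangent vector $X$ on $B$.

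To prove this last identity, apply Proposition \ref{grtrans}: for each $X \in T_bB$ the map $\phi(X)$ takes values in $F^{p+2}H/F^{p}H$ and annihilates $F^{p-2}H$, so it descends to a map $F^{p}H/F^{p-2}H = H^{p}_{\overline{\partial}}(M_b) \to F^{p+2}H/F^{p}H = H^{p+2}_{\overline{\partial}}(M_b)$, and this descent is the cup (Clifford) product with the generalized Kodaira--Spencer class $\kappa_b(X) \in H^2(L_b)$. A linear map $F^{p}H \to F^{p+2}H/F^{p}H$ annihilating $F^{p-2}H$ is determined by its descent to $F^{p}H/F^{p-2}H$, so it suffices to compare the descended maps of $\phi(IX)$ and $i\,\phi(X)$: these are cup product with $\kappa_b(IX)$ and $i$ times cup product with $\kappa_b(X)$, i.e. cup product with $i\,\kappa_b(X)$, the product being $\mathbb{C}$-bilinear. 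By Proposition \ref{ksmc} we have $\kappa_b(IX) = i\,\kappa_b(X)$, so the two descended maps coincide, whence $\phi(IX) = i\,\phi(X)$ and $\mathcal{P}^{p}$ is holomorphic.

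The only substantive ingredient is Proposition \ref{ksmc}; everything else is bookkeeping that transports complex linearity of $\kappa$ through the Gauss--Manin parallel transport and the Grassmannian tangent-space identification to complex linearity of $d\mathcal{P}^{p}$. The main point to be careful with is simply that all of these intermediate identifications are $\mathbb{C}$-linear, so that no hidden complex conjugation creeps in; once that is checked, no analysis beyond what is already in place is needed.
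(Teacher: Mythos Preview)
Your proof is correct and follows exactly the same approach as the paper: the paper's argument is simply that the differential of $\mathcal{P}^{p}$ is given by the Clifford action of the generalized Kodaira--Spencer class (Proposition \ref{grtrans}), which is complex linear for holomorphic families by Proposition \ref{ksmc}, hence $\mathcal{P}^{p}$ is holomorphic. You have spelled out more carefully the bookkeeping steps (complex linearity of parallel transport, the Grassmannian tangent identification, and why the descended map determines $\phi(X)$), but the substance is identical.
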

\begin{proof}
We have seen how the differential of $\mathcal{P}^{p}$ is essentially given by the action $TB \otimes H^p_{\overline{\partial}} \to H^{p+2}_{\overline{\partial}}$ of the generalized Kodaira-Spencer class. In Proposition \ref{ksmc} we showed that for holomorphic families of generalized complex structures the generalized Kodaira-Spencer class is complex linear. It follows immediately that $\mathcal{P}^{p}$ is holomorphic.
\end{proof}

%%%%%%%%%%%%%%%%%%%%%%%%%%%%%%%%%%%%%%%%%%%%%%%%%%%%%%%%%%%%%%%%%%%%%%%%%%%%%%%%%%%%%%%%%%%%%%%%%%%%%
%%%%%%%%%%%%%%%%%%%%%%%%%%%%%%%%%%%%%%%%%%%%%%%%%%%%%%%%%%%%%%%%%%%%%%%%%%%%%%%%%%%%%%%%%%%%%%%%%%%%%
%%%%%%%%%%%%%%%%%%%%%%%%%%%%%%%%%%%%%%%%%%%%%%%%%%%%%%%%%%%%%%%%%%%%%%%%%%%%%%%%%%%%%%%%%%%%%%%%%%%%%
%%%%%%%%%%%%%%%%%%%%%%%%%%%%%%%%%%%%%%%%%%%%%%%%%%%%%%%%%%%%%%%%%%%%%%%%%%%%%%%%%%%%%%%%%%%%%%%%%%%%%

\section{Special cases}\label{speccas}

%%%%%%%%%%%%%%%%%%%%%%%%%%%%%%%%%%%%%%%%%%%%%%%%%%%%%%%%%%%%%%%%%%%%%%%%%%%%%%%%%%%%%%%%%%%%%%%%%%%%%
%%%%%%%%%%%%%%%%%%%%%%%%%%%%%%%%%%%%%%%%%%%%%%%%%%%%%%%%%%%%%%%%%%%%%%%%%%%%%%%%%%%%%%%%%%%%%%%%%%%%%

\subsection{Symplectic type}\label{symplectic}

Let $E$ be an exact Courant algebroid on $M$ and $J$ a generalized complex structure on $E$. Associated to $J$ is a bivector field $\beta \in \Gamma(M,\wedge^2 TM)$ given by letting $\beta \xi = \rho( J \xi)$ for any $1$-form $\xi$. As a consequence of the integrability of $J$ it turns out that $\beta$ is a Poisson structure \cite[Proposition 3.21]{gualgc}. We consider the two extreme cases. When $\beta$ has maximal rank $\beta : T^*M \to TM$ is an isomorphism and the inverse $\omega : TM \to T^*M$ is a symplectic structure, we say that $J$ is of symplectic type. When $\beta$ has minimal rank, that is $\beta = 0$ then $T^*M$ is invariant under $J$ and induces an integrable complex structure on $M$. We say that $J$ is of complex type in this case. In this section we consider the Hodge decomposition for symplectic type generalized complex structures. In Section \ref{cpxtype} we will consider complex type.\\

Let $J$ be a symplectic type generalized complex structure. Thus $\beta$ is a non-degenerate Poisson structure and can be inverted to a symplectic structure $\omega$. Choose an identification $E = TM \oplus T^*M$ with $H$-twisted Dorfman bracket for some closed $3$-form $H$. One can then show that $J$ has the form
\begin{equation*}
J = e^{B} \left[ \begin{matrix} 0 & -\omega^{-1} \\ \omega & 0 \end{matrix} \right] e^{-B} = \left[ \begin{matrix} \omega^{-1}B & -\omega^{-1} \\ \omega + B \omega^{-1} B & -B\omega^{-1} \end{matrix} \right],
\end{equation*}
where we write $\omega^{-1}$ for $\beta$ and $B$ is a $2$-form on $M$. To see this note that an isotropic lift of $TM$ is given by $J(T^*M)$. With respect to this lift $J$ has the form above with $B=0$. Any other splitting is related by a $2$-form $B$ and $J$ takes the form above. The corresponding pure spinor for $J$ is $\rho = e^{-B+i\omega}$. Integrability of $J$ actually forces $\rho$ to be $d_H$-closed which is equivalent to $d\omega = 0$ and $dB = H$. Note in particular that $H$ represents the trivial cohomology class. It is sufficient to assume that $H=0$, so that $B$ is a closed form.\\

To describe the decomposition of forms into the eigenspaces $U_k$ of $J$, let $\beta = \omega^{-1}$ act on forms by contraction $\beta : \wedge^k T^*M_{\mathbb{C}} \to \wedge^{k-2} T^*M_{\mathbb{C}}$. Then we have \cite{cav}
\begin{equation*}
U_{k-n} = \{ e^{-B+i\omega} e^{\beta/2i} \alpha \, | \, \alpha \in \wedge^k T^*M_{\mathbb{C}} \}.
\end{equation*}
If we define $\phi : \wedge^* T^*M \to \wedge^* TM$ by $\phi(\alpha) = e^{-B+i\omega} e^{\beta/2i} \alpha$ then $\phi$ restricted to $\wedge^k T^*M_{\mathbb{C}}$ is an isomorphism $\phi : \wedge^k T^*M \to U_{k-n}$. As in \cite{bryp} define an operator $\delta : \Gamma(\wedge^k T^*M_{\mathbb{C}}) \to \Gamma(\wedge^{k-1} T^*M_{\mathbb{C}})$ by setting $\delta = [\beta , d]$. We make use of the identity $[\beta , \delta] = 0$ from \cite{cav} to see that $[d , \beta^k ] = -k \beta^{k-1} \delta$. From this it is shown in \cite{cav} that
\begin{equation*}
d \phi(\alpha) = \phi( d\alpha - \frac{1}{2i} \delta \alpha)
\end{equation*}
and it immediately follows that
\begin{align*}
\overline{\partial} \phi(\alpha) &= \phi( d\alpha), \\
\partial \phi(\alpha) &= -\frac{1}{2i} \phi( \delta \alpha).
\end{align*}
We see in particular that $\phi$ induces an isomorphism $H^k(M,\mathbb{C}) \simeq H^{k-n}_{\overline{\partial}}(M)$ between de Rham cohomology with complex coefficients and the $\overline{\partial}$-cohomology. As a consequence the Fr\"olicher spectral sequence always converges at the $E_1$-stage in the symplectic case. However, this is not sufficient for the $\partial \overline{\partial}$-lemma to hold. Using the bundle isomorphism $\phi$ the operators $\overline{\partial},\partial$ correspond to $d$ and $-\delta/2i$. It follows that the $\partial \overline{\partial}$-lemma is equivalent to a corresponding property of the operators $d,\delta$. As explained in \cite{cav}, for a compact manifold $M$ it follows from \cite{mer} together with \cite{yan,mat} that the $\partial \overline{\partial}$-lemma is equivalent to the strong Lefschetz property: the map $\omega^{n-k} : H^k(M,\mathbb{R}) \to H^{2n-k}(M,\mathbb{R})$ is an isomorphism for $0 \le k \le n$, where $M$ has dimension $2n$.\\

Let $L \subset E_{\mathbb{C}}$ be as usual the $i$ eigenspace of $J$. If we set $\sigma = \omega + iB$ then we can write $L$ as a graph $L = \{ X - i\sigma X \, | \, X \in TM_{\mathbb{C}} \}$. There is an isomorphism $\psi : TM_{\mathbb{C}} \to L$ given by $\psi(X) = X - i\sigma X$. Observe that $TM_{\mathbb{C}}$ is a complex Lie algebroid with Lie bracket the usual Lie bracket of vector fields. One can easily see that $\psi$ is an isomorphism of Lie algebroids, indeed $\psi$ is just the inverse of the anchor $\rho : L \to TM_{\mathbb{C}}$. It follow that there is an induced pullback map $\psi^* : \wedge^* L^* \to \wedge^* T^*M_{\mathbb{C}}$ which induces an isomorphism of Lie algebroid cohomology groups $\psi^* : H^k(L) \to H^k(M,\mathbb{C})$. Let $\xi$ be a $1$-form. By the definition of $\psi$ it follows that $\psi^{-1}(\xi) = Y + i \sigma Y$, where $i\omega Y = \xi$. With some further algebra one deduces that the Clifford action $L^* \otimes U_k \to U_{k+1}$ corresponds under $\phi$ and $\psi$ to the wedge product $T^*M_{\mathbb{C}} \otimes \wedge^k T^*M_{\mathbb{C}} \to \wedge^{k+1} T^*M_{\mathbb{C}}$, namely
\begin{equation*}
\psi^{-1}(\xi) \phi(\alpha) = 2 \phi( \xi \wedge \alpha).
\end{equation*}
It follows that up to factors of $2$ the action $H^j(L) \otimes H^k_{\overline{\partial}}(M) \to H^{j+k}_{\overline{\partial}}(M)$ of the Lie algebroid cohomology of $L$ on the cohomology of $\overline{\partial}$ is just the wedge product in de Rham cohomology.\\

Let us assume now that $M$ is compact and the strong Lefschetz property holds. Thus we get a Hodge structure on the $\mathbb{Z}_2$-graded cohomology of $M$. The stability of the $\partial \overline{\partial}$-lemma under small deformations is particularly easy to see here, small deformations of symplectic structure will continue to satisfy the strong Lefschetz property. It is worth remarking here that all sufficiently small deformations of $M$ as a generalized complex manifold of symplectic type are also of symplectic type. The space of first order deformations is given by $H^2(L) \simeq H^2(M,\mathbb{C})$ and corresponds to deformations of $\sigma = \omega + iB$. In fact if $\mu$ is a closed complex $2$-form we consider the family of generalized complex structures corresponding to the family of spinors $\rho_t = e^{i(\sigma + t \mu)}$, which is a pure spinor for all small enough $t$. Differentiating at $t=0$ we get $i \mu \wedge e^{i\sigma}$ which immediately implies that under the identification $H^2(L) \simeq H^2(M,\mathbb{C})$, the generalized Kodaira-Spencer class is $i\mu/2$.
\begin{proposition}\label{symhs}
The subspace $F^p H(M)$ of the twisted cohomology group \linebreak $H^{p+n}(M,0)_{\mathbb{C}}$ is given by $e^{i\sigma} ( H^{p+n}(M,\mathbb{C}) \oplus H^{p+n-2}(M,\mathbb{C}) \oplus \dots )$.
\end{proposition}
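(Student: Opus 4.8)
The plan is to compute $F^pH(M)$ directly at the level of differential forms and then pass to cohomology. Two preliminary remarks: the pure spinor $\rho = e^{-B+i\omega} = e^{i\sigma}$ has a nonzero degree‑zero component, so the parity is $\tau = 0$, and since we may take $H = 0$ the relevant twisted cohomology is simply $H^*(M,0)_\mathbb{C}$ with its $\mathbb{Z}_2$‑grading, whence $F^pH(M)$ is a subspace of $H^{p+n}(M,0)_\mathbb{C}$. Next, unwinding the definition $F^{k-2p}H(M) = \overline{\hat F^pH^k(M,H)}$ of Section \ref{hsgcm} (the filtration by $p$‑degree of $W^{p,q}=\Gamma(U_{p-q})$ translates, after conjugation using $\overline{U_j}=U_{-j}$, into the condition ``$U_j$‑components only for $j\le p$''), one gets, as already remarked there, that $F^pH(M)$ consists of exactly those classes in $H^{p+n}(M,0)_\mathbb{C}$ admitting a $d$‑closed representative lying in $\bigoplus_{j\le p}\Gamma(U_j)$.

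The key step will be to identify $\bigoplus_{j\le p}\Gamma(U_j)$ (in the parity of $p$) with $e^{i\sigma}\wedge\Gamma(G)$, where $G$ denotes the bundle of forms of degree at most $p+n$ whose parity equals that of $p+n$. For this I would invoke the bundle isomorphism $\phi(\alpha)=e^{i\sigma}e^{\beta/2i}\alpha$ of \cite{cav}, which carries $\wedge^k T^*M_\mathbb{C}$ onto $U_{k-n}$, so that $\bigoplus_{j\le p}U_j=\phi(G)$. Now $e^{i\sigma}\wedge(-)$ is a global automorphism of $\wedge^* T^*M_\mathbb{C}$ with inverse $e^{-i\sigma}\wedge(-)$; and since contraction by $\beta$ strictly lowers form degree, $e^{\beta/2i}$ is unipotent and preserves every truncation $\bigoplus_{k\le m}\wedge^k T^*M_\mathbb{C}$, its inverse $e^{-\beta/2i}$ doing likewise, so $e^{\beta/2i}$ restricts to an automorphism of $G$. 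Hence $\phi(G)=e^{i\sigma}\wedge\big(e^{\beta/2i}G\big)=e^{i\sigma}\wedge G$. Note that, pleasantly, the intertwining identities $\overline{\partial}\phi(\alpha)=\phi(d\alpha)$ and $\partial\phi(\alpha)=-\tfrac{1}{2i}\phi(\delta\alpha)$ are not needed here.

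The remainder is bookkeeping. A form $\mu$ of the parity of $p+n$ lies in $\bigoplus_{j\le p}\Gamma(U_j)$ iff $\mu=e^{i\sigma}\wedge\gamma$ for a unique $\gamma\in\Gamma(G)$; since $\omega$ and $B$ are closed, $d(e^{i\sigma})=i\,e^{i\sigma}\wedge d\sigma=0$, so $d\mu=e^{i\sigma}\wedge d\gamma$, and as $e^{i\sigma}\wedge(-)$ is injective, $\mu$ is $d$‑closed iff $\gamma$ is. Combined with the description of $F^pH(M)$ above this yields $F^pH(M)=\{\,[e^{i\sigma}\wedge\gamma]:\gamma\in\Gamma(G),\ d\gamma=0\,\}$. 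Finally $\gamma\mapsto[e^{i\sigma}\wedge\gamma]$ descends to a well‑defined map $H^*(M,\mathbb{C})\to H^*(M,0)_\mathbb{C}$ because $e^{i\sigma}\wedge d\nu=d(e^{i\sigma}\wedge\nu)$, and therefore $F^pH(M)=e^{i\sigma}\big(H^{p+n}(M,\mathbb{C})\oplus H^{p+n-2}(M,\mathbb{C})\oplus\cdots\big)$, as claimed.

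I expect the main obstacle to be the identification in the second paragraph: because the $U_j$ consist of inhomogeneous forms, it is not visually apparent that their span up to level $p$ is exactly $e^{i\sigma}$ applied to the honestly truncated space of forms, and the point that makes it work is the ``lower triangular, unipotent'' behaviour of $e^{\beta/2i}$ with respect to the degree filtration; keeping the parities and the degree shift $j=k-n$ straight is the other place where care is needed. It is worth noting that neither compactness nor the strong Lefschetz property enters this identification — they are only needed to guarantee that $F^pH(M)$ is a genuine Hodge filtration, which is the setting in which the statement is of interest.
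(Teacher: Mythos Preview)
Your proof is correct and follows essentially the same route as the paper: both arguments use that $\phi=e^{i\sigma}e^{\beta/2i}$ identifies $\bigoplus_{j\le p}U_j$ with $e^{i\sigma}$ applied to the degree-truncated forms because $e^{\beta/2i}$ is unipotent lower-triangular for the degree filtration, and then pass to cohomology using $d(e^{i\sigma})=0$. Your write-up is slightly more explicit about the parity bookkeeping and the unipotence, but the substance is the same.
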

\begin{proof}
Let $x$ be a class in $F^p H(M)$. This means that $x$ has a representative of the form $e^{i\sigma} e^{\beta/2i} \alpha$, where $\alpha$ is a sum of forms of degree $\le p+n$. Then $e^{-i\sigma}x$ is represented by $e^{\beta/2i}\alpha$. Note that since $\beta$ only lowers degree we have that $e^{\beta/2i}\alpha$ is a sum of closed forms of degree $\le p+n$. Thus $e^{-i\sigma} x \in H^{p+n}(M,\mathbb{C}) \oplus H^{p+n-2}(M,\mathbb{C}) \oplus \dots$. Conversely if $\mu$ is a sum of closed forms of degree $\le p+n$ then we can write $\mu = e^{\beta/2i}\alpha$ for some form $\alpha$ which also has degree $\le p+n$. It then follows that $e^{i\sigma}\mu = e^{i\sigma}e^{\beta/2i}\alpha$ is a class in $F^p H(M)$.
\end{proof}
We see from this result that in the symplectic case the filtration $F^pH(M)$ is completely determined from the cohomology ring $H^*(M,\mathbb{C})$ and the class $\sigma$. To determine how the groups $H^p_{\overline{\partial}}(M)$ sit inside $H^*(M,\mathbb{C})$ we can use the relation $H^p_{\overline{\partial}}(M) = F^p H(M) \cap \overline{ F^{-p} H(M)}$, so the entire Hodge decomposition follows from the structure of the cohomology ring and the class $\sigma$.\\

Suppose now we have a holomorphic family $\pi : M \to B$ of generalized complex structures of symplectic type satisfying the $\partial \overline{\partial}$-lemma. For each $b \in B$ we have a pure spinor $e^{i\sigma(b)}$ for $M_b$, where $\sigma = \omega + iB$ determines a section $[\sigma] = [\omega + iB]$ of the flat vector bundle $H^2_\mathbb{C}$ associated to the local system $R^2 \pi_* \mathbb{C}$. Since the family is holomorphic we have by Proposition \ref{pmholo} that the period maps are holomorphic, but from Proposition \ref{symhs} we see that the period maps are holomorphic if and only $\sigma$ is holomorphic as a section of $H^2_\mathbb{C}$.

%%%%%%%%%%%%%%%%%%%%%%%%%%%%%%%%%%%%%%%%%%%%%%%%%%%%%%%%%%%%%%%%%%%%%%%%%%%%%%%%%%%%%%%%%%%%%%%%%%%%%
%%%%%%%%%%%%%%%%%%%%%%%%%%%%%%%%%%%%%%%%%%%%%%%%%%%%%%%%%%%%%%%%%%%%%%%%%%%%%%%%%%%%%%%%%%%%%%%%%%%%%

\subsection{Complex type}\label{cpxtype}

On a generalized complex manifold $(M,E,J)$ the decomposition of forms determined by $J$ gives rise to a grading on twisted cohomology and a corresponding filtration $F^j H(M)$. However there is another filtration $W^j H(M)$ on twisted cohomology which does not involve $J$. Namely it is the filtration induced by a corresponding filtration $\{W^j \}$ of the bundle of even or odd differential forms, where $W^j = \bigoplus_{p \ge j} \wedge^p T^*M_\mathbb{C}$. This is a filtration on the total space $S_\mathbb{C} = \wedge^* T^*M_\mathbb{C}$ of differential forms. Since the twisted differential $d_H = d + H$ only increases degree it preserves this filtration. We let $W^j H(M)$ be the image of the $d_H$-cohomology of $W^j$ in the total twisted cohomology group $H^*(M,H)_\mathbb{C} = H^0(M,H)_\mathbb{C} \oplus H^1(M,H)_\mathbb{C}$ induced by the inclusion $W^j \to S_\mathbb{C}$. 

On a generalized complex manifold we thus have two filtrations on twisted cohomology. However it would appear that in general these two filtrations do not interact well, or at least it is not clear why they should. The problem is that while the twisted differential $d_H$ respects the filtration $\{ W^j \}$, the same need not be true for the operators $\partial,\overline{\partial}$. Since $\partial,\overline{\partial}$ are obtained from $d_H$ through the inclusion $T^*M \to E$ followed by projection to the $\pm i$ eigenspaces $L,\overline{L}$ it follows that $\partial,\overline{\partial}$ will preserve the filtration $\{ W^j \}$ if and only if we have
\begin{equation*}
T^*M_{\mathbb{C}} = (T^*M_{\mathbb{C}} \cap L ) \oplus (T^*M_{\mathbb{C}} \cap \overline{L} ).
\end{equation*}
Equivalently, this says that $T^*M$ as a subbundle of $E$ is preserved by $J$. In this case $J$ induces an integrable complex structure on $M$ and we say $J$ is of complex type. Let $I$ denote the induced complex structure on $M$. One can show that there exists an isotropic splitting $E = TM \oplus T^*M$ for $E$ such that $J$ has the form
\begin{equation}\label{gcsct}
J = \left[ \begin{matrix} -I & 0 \\ 0 & I^* \end{matrix} \right].
\end{equation}
Note however that when considering families it is better to think of an arbitrary generalized complex structure of complex type to look like a $B$-shift of (\ref{gcsct}), since we can always deform such a $J$ by a family of $B$-shifts. Let $H$ denote the closed $3$-form corresponding to the choice of splitting for $E$. Then integrability of $J$ is equivalent to integrability of $I$ together with the restriction that $H$ be of type $(2,1) + (1,2)$. The $i$ eigenspace $L$ is given by $L = T^{0,1}M \oplus {T^*}^{1,0}M$. We then have decompositions
\begin{align*}
\wedge^k L^* &= \bigoplus_{p+q=k} \wedge^p T^{1,0}M \otimes \wedge^{0,q} T^*M, \\
U_k &= \bigoplus_{q-p = k} \wedge^{p,q} T^*M.
\end{align*}
Let us write $H = h + \overline{h}$, where $h$ is a $(1,2)$-form. Then the differential $\overline{\partial}$ is easily seen to have the form $\overline{\partial} = \overline{\partial}_0 + \overline{h}$, where $\overline{\partial}_0$ denotes the usual (untwisted) $\overline{\partial}$-operator on a complex manifold. Thus $H^k_{\overline{\partial}}(M)$ is a kind of twisted Dolbeault cohomology. For any holomorphic vector bundle $A$ we can combine the twisted $\overline{\partial}$ operator $\overline{\partial} : \Gamma(U_k) \to \Gamma(U_{k+1})$ with the $\overline{\partial}$ operator for $A$ to get an operator $\overline{\partial}_A : \Gamma(A_k) \to \Gamma(A_{k+1})$ where $A_k = A \otimes U_k$. We let $H^k_{\overline{\partial}}(A)$ denote the cohomology groups. In particular we have that the Lie algebroid cohomology has this form: $H^k(L) \simeq H^k_{\overline{\partial}}(K^*)$, where $K = \wedge^{n,0}T^*M$ is the canonical bundle for the complex structure on $M$.\\

In what follows we would like to compare the filtration $F^k H(M)$ associated to the generalized Fr\"{o}licher spectral sequence with the with the filtration $W^j H(M)$ defined as above. It will be convenient to temporarily forget the $\mathbb{Z}_2$-grading on twisted cohomology, thus instead of the usual filtrations $\{ F^kH(M) \}$ on even and odd twisted cohomology, let us instead consider the wrapped up filtration $\{ \tilde{F}^k H(M) \}$ on the total space of twisted cohomology defined by $\tilde{F}^k H(M) = F^kH(M) \oplus F^{k-1}H(M)$. Clearly we have
\begin{equation*}
\tilde{F}^{-n}H(M) \subseteq \tilde{F}^{-n+1}H(M) \subseteq \cdots \subseteq \tilde{F}^nH(M) = H^*(M,H)_\mathbb{C}.
\end{equation*}
Set $\tilde{F}^k = \bigoplus_{j \ge k} U_k$. It follows that $\tilde{F}^kH(M)$ is the subspace of twisted cohomology which have representatives in $\tilde{F}^k$. As above let $W^j = \bigoplus_{ i \ge j} \wedge^i T^*M_{\mathbb{C}}$ and observe the following properties:
\begin{itemize}
\item{$\overline{W^j} = W^j$}
\item{$W^j = \bigoplus_k (U_k \cap W^j)$}
\item{$\overline{\partial}( \Gamma(W^j) ) \subseteq \Gamma( W^j)$.}
\end{itemize}
In fact $\overline{\partial}$ sends $W^j$ to $W^{j+1}$, but in what follows we need only the weaker statement above.
\begin{proposition}\label{mhs}
Let $W^jH(M)$ be the filtration on twisted cohomology induced by the filtration $\{W^j\}$ and $\tilde{F}^jH(M)$ the filtration on twisted cohomology as above. If $(M,E,J)$ satisfies the $\partial \overline{\partial}$-lemma then the filtrations $\{W^j H(M)\},\{\tilde{F}^j H(M)\}$ form a mixed Hodge structure on the twisted cohomology groups in the following sense. Let $\tilde{F}^{i,j}H(M)$ be the image of $\tilde{F}^i H(M) \cap W^jH(M)$ under the projection $W^jH(M) \to W^j H(M) / W^{j+1} H(M)$. Then the $\tilde{F}^{i,j}H(M)$ define a Hodge structure on $W^jH(M)/W^{j+1}H(M)$.
\end{proposition}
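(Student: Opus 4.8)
The natural approach is to adapt the argument of \cite{dgms} — the one already invoked above to characterize the $\partial\overline{\partial}$-lemma — carrying the weight filtration $\{W^j\}$ along at every step. Since $J$ is of complex type the bundle of forms is not merely filtered but bigraded, $S_{\mathbb C}=\bigoplus_{p,q}\wedge^{p,q}T^*M$, with $U_k=\bigoplus_{q-p=k}\wedge^{p,q}T^*M$ and $W^j=\bigoplus_{p+q\ge j}\wedge^{p,q}T^*M$, so that $W$ is \emph{split} at the level of forms. The first observation is the one recorded just before the statement: $\overline{W^j}=W^j$, the operators $\partial$ and $\overline{\partial}$ are conjugate, and each carries $\Gamma(W^j)$ into $\Gamma(W^{j})$ (indeed into $\Gamma(W^{j+1})$); hence $\Gamma(W^j)$ is a subcomplex of $(\Gamma(S_{\mathbb C}),d_H)$ which is stable under $\partial$ and $\overline{\partial}$ separately and which is graded by the $U$-degree.

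The heart of the argument would be a $W$-relative version of the structure theorem for complexes satisfying the $\partial\overline{\partial}$-lemma: every class of $H^*(M,H)_{\mathbb C}$ has a representative that is simultaneously $\partial$-closed, $\overline{\partial}$-closed, and lies in a fixed complement of $\mathrm{Im}\,\partial+\mathrm{Im}\,\overline{\partial}$, and — this is the new ingredient — a class lying in $W^jH(M)$ has such a representative inside $\Gamma(W^j)$. Granting this, take a class in $W^jH(M)$ and a $\partial$-, $\overline{\partial}$-closed representative $\omega\in\Gamma(W^j)$. Because $\partial$ and $\overline{\partial}$ are homogeneous of degree $\mp1$ for the $U$-grading and $W^j=\bigoplus_k(U_k\cap W^j)$, the $U$-homogeneous components $\omega_k$ of $\omega$ are again $\partial$- and $\overline{\partial}$-closed elements of $\Gamma(W^j)$, hence $d_H$-closed; so the Hodge decomposition $H^*(M,H)_{\mathbb C}=\bigoplus_kH^k_{\overline{\partial}}(M)$ furnished by the $\partial\overline{\partial}$-lemma is compatible with the weight filtration, $W^jH(M)=\bigoplus_k\big(W^jH(M)\cap H^k_{\overline{\partial}}(M)\big)$. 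Passing to the quotient gives $W^jH(M)/W^{j+1}H(M)=\bigoplus_k\mathrm{Gr}^j_WH^k_{\overline{\partial}}(M)$; since $\overline{W^j}=W^j$ and $\overline{H^k_{\overline{\partial}}(M)}=H^{-k}_{\overline{\partial}}(M)$, complex conjugation on this space exchanges the $k$-th and $(-k)$-th summands, and as $\tilde F^{i,j}H(M)=\bigoplus_{k\ge i}\mathrm{Gr}^j_WH^k_{\overline{\partial}}(M)$ this is precisely a Hodge structure on $W^jH(M)/W^{j+1}H(M)$. Its weight is $j$, as one checks from the case $H=0$, where the quotient is $H^j(M,{\mathbb C})$ with its usual Hodge structure, so that the ordinary Hodge decomposition of $M$ is recovered there.

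The main obstacle is the $W$-relative statement just invoked — producing the $\partial$-, $\overline{\partial}$-closed representative inside $\Gamma(W^j)$. Concretely one must show that the subcomplex $\Gamma(W^j)$ inherits the $\partial\overline{\partial}$-lemma, that is $\mathrm{Im}(\partial|_{\Gamma(W^j)})\cap\ker\overline{\partial}=\mathrm{Im}(\overline{\partial}|_{\Gamma(W^j)})\cap\ker\partial=\mathrm{Im}(\partial\overline{\partial}|_{\Gamma(W^j)})$, or equivalently that the weight filtration on $H^*(M,H)_{\mathbb C}$ is strict for $d_H$. This is the single point where the two filtrations genuinely interact, and unlike the classical situation — where the analogous graded pieces are cohomologies of honest compact K\"ahler manifolds — there is no geometric shortcut. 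I would attempt it by running the diagram chase of \cite{dgms} entirely inside the filtered complex, at each stage invoking only $d_H$-closedness and $d_H$-exactness of elements of $\Gamma(W^j)$ together with the strictness already established in lower degree, thereby bootstrapping $W$-compatible representatives; once this is in place, the remaining bookkeeping is the formal opposed-filtration criterion, exactly parallel to the earlier characterization of the $\partial\overline{\partial}$-lemma.
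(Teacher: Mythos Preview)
Your outline is on the right track and the structure you propose---show that $W^jH(M)$ splits along the $U$-grading, then pass to the quotients---is exactly what the paper does. Where you and the paper part ways is in the ``main obstacle'' you identify at the end: you frame the representative problem as requiring the full $\partial\overline{\partial}$-lemma for the subcomplex $\Gamma(W^j)$, and you leave this unproven. The paper sidesteps this entirely.

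The paper's argument is purely elementary and uses only the three properties you already listed: $\overline{W^j}=W^j$, the splitting $W^j=\bigoplus_k(U_k\cap W^j)$, and $\overline{\partial}\,\Gamma(W^j)\subseteq\Gamma(W^j)$. Working in $\overline{\partial}$-cohomology (identified with $d_H$-cohomology by the ambient $\partial\overline{\partial}$-lemma), the paper proves directly that
\[
F^kH\cap W^jH \;=\; \{[x]:\ x\in F^k\cap W^j,\ \overline{\partial}x=0\}
\]
by a one-line truncation: given $\overline{\partial}$-closed $x\in F^k$ and $y\in W^j$ with $[x]=[y]$, write $y=x+\overline{\partial}z$, truncate $z$ by $U$-degree to $z'\in F^{k-1}$, and set $x'=x+\overline{\partial}z'\in F^k$; decomposing $y$ along the $U$-grading and invoking (W2), one sees that the $F^k$-part of $y$ lies in $W^j$ and equals $x'$. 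No inductive bootstrap, no filtered diagram chase, no $\partial\overline{\partial}$-lemma for the subcomplex. From this description of $F^kH\cap W^jH$ the Hodge condition on $W^jH/W^{j+1}H$ drops out by the same device: split any $\overline{\partial}$-closed $x\in W^k$ along the $U$-grading into its $F^j$- and $\overline{F^{-j-2}}$-parts, each of which stays in $W^k$ by (W2) and remains $\overline{\partial}$-closed.

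In short, the gap you flag is not a genuine obstacle: the representative statement you need is weaker than the filtered $\partial\overline{\partial}$-lemma and follows immediately from the splitting property (W2). Your proof would become complete---and essentially identical to the paper's---if you replaced the proposed diagram chase by this truncation argument.
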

\begin{proof}
The result will follow by considering a certain class of complex. Let $V$ be a complex vector space with differential $d$ and grading in the range $-n, \dots , n$, so $V = V_{-n} \oplus \dots \oplus V_n$. Assume also that $\overline{V_k}  = V_{-k}$, that $d = \partial + \overline{\partial}$ with $\overline{\partial}$ having degree $1$ and suppose also that the $\partial \overline{\partial}$-lemma holds. Next let $W^j$ be a decreasing filtration on $V$ with the following properties:
\begin{itemize}
\item[(W1)]{$\overline{W^j} = W^j$}
\item[(W2)]{$W^j = \bigoplus_k (V_k \cap W^j)$}
\item[(W3)]{$\overline{\partial} W^j  \subseteq W^j$.}
\end{itemize}
Let $F^k = \bigoplus_{j \le k} V_k$ and $F^k H, W^j H$ the filtrations on the cohomology of $V$ induced by $\{F^k\},\{W^j\}$. We will show that $F^kH, W^j H$ defines a mixed Hodge structure on $H$, the cohomology of $V$. The first step is to prove the following:
\begin{equation*}
F^kH \cap W^j H = \{ [x] \, | \, x \in F^k \cap W^j, \, \overline{\partial}x = 0\},
\end{equation*}
where $[x]$ denotes the $\overline{\partial}$ cohomology class of $x$ and we are making use of the $\partial \overline{\partial}$-lemma to deduce that the $d$ and $\overline{\partial}$ cohomology of $V$ are isomorphic. To prove this suppose that $x \in F^k$ and $y \in W^j$ are $\overline{\partial}$-closed and $[x] = [y]$. Then $y = x + \overline{\partial}z$ for some $z \in V$. Write $z = \sum_i z_i$ where $z_i \in V_i$. Set $z' = \sum_{i \ge k-1} z_i$ so that $z' \in F^{k-1}$ and let $x' = x + \overline{\partial}z'$. Then $[x'] = [x]$ and $x' \in F^{k}$. Also write $y = y' + y''$ where $y' \in F^k$ and $y'' \in \overline{F^{-k-2}}$. Then by (W2) we have that $y',y'' \in W^j$ and it is straightforward to see that $x' = y'$. So $x' \in F^k \cap W^j$ and represents the same class as $x$ and $y$ as required.\\

Let $\tilde{F}^{j,k}H$ be the image of $F^jH \cap W^k H$ under the projection $W^k H \to W^k H / W^{k+1}H$. It follows that $\tilde{F}^{j,k} = (F^jH \cap W^k H )/( F^jH \cap W^{k+1}H)$ and thus 
\begin{equation*}
\tilde{F}^{j,k}H = \frac{\{ [x] \, | \, x \in F^j \cap W^k, \, \overline{\partial}x = 0 \}}{\{ [x] \, | \, x \in F^j \cap W^{k+1}, \, \overline{\partial}x = 0 \}}.
\end{equation*}
Now using this we will show that $\tilde{F}^{j,k}H \oplus \overline{\tilde{F}^{-j-2,k}} = H$, which is the condition for the $\tilde{F}^{j,k}$ to define a Hodge structure. First we show that $\tilde{F}^{j,k}H \oplus \overline{\tilde{F}^{-j-2,k}} \to H$ is injective. Indeed this follows since $(F^j H \cap W^kH) \cap (\overline{F^{-j-2}H} \cap W^kH) \subseteq F^j H \cap \overline{F^{-j-2}H} = 0$, since the $F^jH$ define a Hodge structure on $H$. To show surjectivity let $x \in W^k$ be $\overline{\partial}$-closed. Then we may write $x = x' + x''$ where $x' \in F^j$ and $x'' \in \overline{ F^{-j-2}}$. By (W2) we also have $x',x'' \in W^k$ and it is clear that $\overline{\partial}x' = \overline{\partial}x'' = 0$. We thus have $[x] = [x'] + [x'']$ where $[x'] \in \tilde{F^{j,k}}$ and $[x''] \in \overline{\tilde{F}^{-j-2,k}}$.
\end{proof}
We would like to consider variations of Hodge structure in generalized complex families of complex type. Consider the Lie algebroid cohomology $H^2(L)$. There is a natural projection map $H^2(L) \to H^0( M , \wedge^2 T^{1,0}M)$ obtained by projection to the $\wedge^2 T^{1,0}M$ component of $\wedge^2 L^*$. If we restrict to deformations of generalized complex structure that preserve the property of being of complex type, we see that this component must vanish. Let $H^2_0(L)$ denote the kernel of the map $H^2(L) \to H^0( M , \wedge^2 T^{1,0}M)$. Associated to the short exact sequence $\wedge^{1,0}T^*M \to L \to T^{0,1}M$ is a spectral sequence for the Lie algebroid cohomology $H^*(L)$. We find that $H^2_0(L)$ fits into an exact sequence
\begin{equation*}
H^0(M,T^{1,0}M) \buildrel \overline{h} \over \to H^2(M,\mathcal{O}) \to H^2_0(L) \buildrel \rho \over \to H^1( M , T^{1,0}M) \buildrel \overline{h} \over \to H^3(M,\mathcal{O}).
\end{equation*}
The map $\rho : H^2_0(L) \to H^1( M , T^{1,0}M)$ sends the deformation of generalized complex structure of complex type to the Kodaira-Spencer class representing the deformation of underlying complex structure. The map $H^2(M,\mathcal{O})$ represents a shift by a $2$-form $B + \overline{B}$, where $B$ is a $\overline{\partial}$-closed $(0,2)$-form.\\

Let us explain how the variation of the Hodge structures induced by the mixed Hodge structure only see the $H^1(M , T^{1,0}M)$ part of the generalized Kodaira-Spencer class. In other words, if we take the Hodge structures $\tilde{F}^{i,j}H(M)$ induced by the weight filtration, their infinitesimal variation only depends on the image of the generalized Kodaira-Spencer class under the map $\rho : H^2_0(L) \to H^1( M , T^{1,0}M)$. To see this, one need only observe that there is a natural filtration on the bundles $\wedge^k L^*$ given by setting $F^p \wedge^k L^* = \bigoplus_{i \ge p} \wedge^{0,i} T^*M \otimes \wedge^{k-i} T^{1,0}M$. Then one can show that the Clifford action of $F^p \wedge^k L^*$ sends $U_i \cap W^j$ to $U_{i+k} \cap W^{j+2p-k}$. In particular we see that $F^1 \wedge^2 L^*$ sends $U_i \cap W^j$ to $U_{i+2} \cap W^j$, while $F^p \wedge^2 L^*$ sends $U_i \cap W^j$ to $U_{i+2} \cap W^{j+2}$. As a consequence we get an induced action $T^{1,0}M \otimes \wedge^{0,1}T^*M = F^1 \wedge^2 L^* / F^2 \wedge^2 L^*$ sending $U_i \otimes W^j/W^{j+1}$ to $U_{i+2} \otimes W^{j}/W^{j+1}$. Therefore on passing to the $W^j/W^{j+1}$ quotients the $\wedge^{0,2}T^*M$ part of the generalized Kodaira-Spencer class gets projected out.

%%%%%%%%%%%%%%%%%%%%%%%%%%%%%%%%%%%%%%%%%%%%%%%%%%%%%%%%%%%%%%%%%%%%%%%%%%%%%%%%%%%%%%%%%%%%%%%%%%%%%
%%%%%%%%%%%%%%%%%%%%%%%%%%%%%%%%%%%%%%%%%%%%%%%%%%%%%%%%%%%%%%%%%%%%%%%%%%%%%%%%%%%%%%%%%%%%%%%%%%%%%

\subsection{Generalized Calabi-Yau manifolds}\label{gcym}

Let $E = TM \oplus T^*M$ with $H$-twisted Dorfman bracket. A generalized complex manifold $(M,J,H)$ is called a {\em generalized Calabi-Yau manifold} \cite{hit} if there exists a globally defined $d_H$-closed pure spinor $\rho$ for $J$. 
\begin{proposition}
There is an isomorphism $H^k(L) \simeq H^{k-n}_{\overline{\partial}}(M)$ given by sending a class $a \in H^k(L)$ to $a \rho$. Under this isomorphism the Clifford action $H^j(L) \otimes H^k_{\overline{\partial}}(M) \to H^{j+k}_{\overline{\partial}}(M)$ corresponds to the wedge  product $H^j(L) \otimes H^{k+n}(L) \to \linebreak H^{j+k+n}(L)$.
\end{proposition}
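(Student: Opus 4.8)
The plan is to realize the Clifford action by $\rho$ as an isomorphism of differential complexes and then read off both assertions from it. Since $\rho$ is a globally defined, nowhere-vanishing section of $K = U_{-n}$, it trivializes the canonical bundle, so the isomorphism $S_{\mathbb C}\simeq \wedge^* L^*\otimes K$ specializes to a bundle isomorphism $\Phi_k : \wedge^k L^* \to U_{k-n}$, $\alpha \mapsto \alpha\cdot\rho$, for each $k$; here one uses that $U_j$ is the image of the Clifford action of $\wedge^{j+n} L^*$ on $K$, together with the fact that, $L^*$ being isotropic, the Clifford-algebra action of $\wedge^* L^*$ coincides with the exterior-algebra action. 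Write $\Phi = \bigoplus_k \Phi_k$.

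The heart of the argument is to show that $\Phi$ is a cochain map from $(\Gamma(\wedge^\bullet L^*), d_L)$ to $(\Gamma(U_{\bullet - n}), \overline{\partial})$, that is $\overline{\partial}(\alpha\cdot\rho) = (d_L\alpha)\cdot\rho$ for every $\alpha \in \Gamma(\wedge^k L^*)$. The structural way to see this is that under $S_{\mathbb C}\simeq \wedge^\bullet L^* \otimes K$ the operator $\overline{\partial}$ (the degree $+1$ part of $d_H = \partial + \overline{\partial}$, the splitting being equivalent to integrability) is identified with the Lie algebroid differential of $L$ with coefficients in the natural $L$-module $K$; since $\rho$ is a $d_H$-closed trivialization of $K$ it is parallel for the corresponding $L$-connection, so $\overline{\partial}$ becomes $d_L$ with trivial coefficients, which is exactly the cochain-map statement. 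If one prefers a direct verification: for $v\in\Gamma(E)$ the graded commutator $[d_H, v\cdot\,]$ on $\Gamma(S)$ is the generalized Lie derivative $\mathcal{L}_v$, a derivation of the Clifford module covering the Dorfman bracket; taking $v$ a section of $L^*\simeq\overline L$ and using that every section of $L$ annihilates $\rho$ and that $d_H\rho = 0$, a Leibniz computation for $\alpha$ a product of sections of $L^*$ reduces $d_H(\alpha\cdot\rho)$ to an expression built from iterated Dorfman brackets and anchors of the factors acting on $\rho$; extracting the degree $+1$ component and comparing with the intrinsic formula for $d_L$ gives the identity. It suffices to check it on algebra generators (functions and sections of $L^*$), since both $d_L$ and the $\Phi$-transport of $\overline{\partial}$ are derivations of $\wedge^\bullet L^*$; on a function it just says $d_L f$ is the $\overline L$-component of $df$, and on $\Gamma(L^*)$ it is a short computation with $\mathcal{L}_v$. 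Passing to cohomology then gives $H^k(L)\simeq H^{k-n}_{\overline{\partial}}(M)$.

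For the compatibility with products I would argue at the cochain level. If $\alpha\in\Gamma(\wedge^j L^*)$ is $d_L$-closed and $\psi = \beta\cdot\rho\in\Gamma(U_k)$ is $\overline{\partial}$-closed with $\beta\in\Gamma(\wedge^{k+n}L^*)$, then, again because $\wedge^\bullet L^*$ is an exterior subalgebra of the Clifford algebra, $\alpha\cdot\psi = \alpha\cdot(\beta\cdot\rho) = (\alpha\wedge\beta)\cdot\rho = \Phi(\alpha\wedge\beta)$. Thus the Clifford action $U_k\to U_{j+k}$ by $\alpha$ is identified, via $\Phi$, with left exterior multiplication $\wedge^{k+n}L^*\to\wedge^{j+k+n}L^*$ by $\alpha$. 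Both operations descend to cohomology: exterior multiplication is compatible with $d_L$ by the Leibniz rule, and since $\Phi$ is a cochain map one gets $\overline{\partial}(\alpha\cdot\psi) = (-1)^{|\alpha|}\alpha\cdot\overline{\partial}\psi$ whenever $d_L\alpha = 0$, so the Clifford action of a $d_L$-closed class on $\overline{\partial}$-cohomology is well defined. Hence the module action $H^j(L)\otimes H^k_{\overline{\partial}}(M)\to H^{j+k}_{\overline{\partial}}(M)$ is identified with the ring multiplication $H^j(L)\otimes H^{k+n}(L)\to H^{j+k+n}(L)$, as claimed.

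The step I expect to be the main obstacle is the cochain-map property: Clifford multiplication is not a derivation of $d_H$ (only the generalized Lie derivative $\mathcal{L}_v$ is), so one cannot simply differentiate through $\alpha\cdot\rho$. The real content is the bookkeeping that makes the isotropy of $L^*$, the vanishing $L\cdot\rho = 0$, and $d_H\rho = 0$ conspire to leave precisely the Lie algebroid differential $d_L$ acting on the factors — equivalently, matching the generalized-Lie-derivative description of $\overline{\partial}$ with the intrinsic definition of $d_L$ on generators. Given that identification, the passage to cohomology and the identification of the two products are purely formal.
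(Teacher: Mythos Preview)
Your proposal is correct and follows exactly the same approach as the paper: define $f(a)=a\rho$, use that $\rho$ is $d_H$-closed to get the cochain-map identity $f(d_L a)=\overline{\partial}f(a)$, and then observe $a\cdot f(b)=f(a\wedge b)$ for the product compatibility. The paper's proof is three lines and treats the cochain-map step as immediate (``since $\rho$ is closed''), whereas you spell out the mechanism via the $L$-module structure on $K$ and the generalized Lie derivative; this extra detail is fine but not needed.
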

\begin{proof}
The map $f: \wedge^k L^* \to U_{k-n}$ defined by $f(a) = a \rho$ is easily seen to be an isomorphism and moreover $f(d_L a) = \overline{\partial} f(a)$, since $\rho$ is closed. The second part of the proposition easily follows since $a f(b) = f(a\wedge b)$, for $a,b$ sections of $\wedge^* L^*$.
\end{proof}

For compact generalized Calabi-Yau manifolds which satisfy the $\partial \overline{\partial}$-lemma there is a Tian-Todorov theorem \cite{got} which states that all infinitesimal deformations of the generalized complex structure are unobstructed and there is a smooth local moduli space of generalized Calabi-Yau strutcures on $M$. Note however that there could potentially be deformations of $M$ for which the $\partial \overline{\partial}$-lemma does not hold, so there is not necessarily a smooth global moduli space of generalized Calabi-Yau structures on $M$. Instead we let $\mathcal{M}$ denote the space of generalized Calabi-Yau structures on $M$ for which the $\partial \overline{\partial}$-lemma holds. Then we know that $\mathcal{M}$ has the structure of a smooth manifold.\

At the point of $\mathcal{M}$ corresponding to a pure spinor $\rho$ of parity $\tau$ we have that the tangent space $T_\rho \mathcal{M}$ canonically identifies with the Lie algebroid cohomology $H^2(L)$, where $L$ is the $i$ eigenspace of the generalized complex structure associated to $\rho$. Assume $\mathcal{M}$ is connected, or restrict to the connected component containing $\rho$. Let $\tilde{\mathcal{M}}$ be the universal covering of $\mathcal{M}$. On fixing the basepoint $\rho \in \mathcal{M}$ we get a period mapping $\mathcal{P}^{-n} : \tilde{\mathcal{M}} \to \mathbb{P}( H^\tau(M , H)_{\mathbb{C}})$ which for a generalized Calabi-Yau corresponding to a pure spinor $\rho'$ assigns the complex line in $ H^\tau(M , H)_{\mathbb{C}}$ spanned by $\rho'$.
\begin{proposition}\label{pmimm}
The period map $\mathcal{P}^{-n} : \tilde{\mathcal{M}} \to \mathbb{P}( H^\tau(M , H)_{\mathbb{C}})$ is an immersion.
\end{proposition}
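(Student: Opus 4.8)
The plan is to show that the differential $d\mathcal{P}^{-n}$ is injective at every point. Since the covering $\tilde{\mathcal{M}}\to\mathcal{M}$ is a local diffeomorphism and being an immersion is local, I would work on $\mathcal{M}$ near a generalized Calabi--Yau structure with pure spinor $\rho$ of parity $\tau$, and compute $d\mathcal{P}^{-n}$ there.

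First I would observe that $M$, $H$ and the tautological family over $\mathcal{M}$ constitute a good family in the sense of Section \ref{vhs}, so that $H^\tau(M,H)_{\mathbb{C}}$ (a fixed vector space, since $M$ and $H$ do not vary) carries the twisted Gauss--Manin connection $\nabla$ and the Hodge filtration with bottom piece $F^{-n}H=H^{-n}_{\overline{\partial}}$. Since $U_{-n}$ is the canonical line bundle and a pure spinor $\rho'$ is a $d_H$-closed generator of it, $H^{-n}_{\overline{\partial}}\simeq H^0(L)$ is one dimensional with fibre spanned by $[\rho']$; hence $\mathcal{P}^{-n}$ is exactly the period map attached to the bottom piece $F^{-n}H$ of the filtration, under the identification ${\rm Grass}(1,H^\tau(M,H)_{\mathbb{C}})=\mathbb{P}(H^\tau(M,H)_{\mathbb{C}})$. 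Consequently the description of $d\mathcal{P}^{p}$ given in the paragraph following Proposition \ref{grtrans} applies.

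Next I would identify the differential. The canonical isomorphism $T_\rho\mathcal{M}\simeq H^2(L)$ is by definition the generalized Kodaira--Spencer map, so $\kappa$ is the identity here. Applying Proposition \ref{grtrans} with $p=-n$, and using $F^{-n-2}H=0$, for $X\in T_\rho\mathcal{M}=H^2(L)$ the homomorphism $d\mathcal{P}^{-n}(X)$ takes values in ${\rm Hom}(F^{-n}H,\,F^{-n+2}H/F^{-n}H)={\rm Hom}(H^{-n}_{\overline{\partial}}(M),\,H^{-n+2}_{\overline{\partial}}(M))$ and is the cup product (Clifford action) with $\kappa(X)=X$. Thus $d\mathcal{P}^{-n}(X)=0$ if and only if $X\cdot[\rho]=0$ in $H^{-n+2}_{\overline{\partial}}(M)$.

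Finally I would invoke the Proposition immediately preceding this statement: $a\mapsto a\rho$ is an isomorphism $H^k(L)\simeq H^{k-n}_{\overline{\partial}}(M)$ intertwining the Clifford action with the wedge product on $H^*(L)$. Under it $[\rho]$ is the image of $1\in H^0(L)$, so $X\cdot[\rho]$ is the image of $X\wedge 1=X\in H^2(L)$; hence $X\cdot[\rho]=0$ forces $X=0$. Therefore $d\mathcal{P}^{-n}$ is injective and $\mathcal{P}^{-n}$ is an immersion. I expect the one point needing care to be the assertion that the differential of the period map is literally the Clifford action of the Kodaira--Spencer class at the base point, rather than merely up to higher-order terms; but this is precisely Proposition \ref{grtrans} specialized to $p=-n$, and everything else is formal once one uses that $\kappa$ is the identity and that $[\rho]$ corresponds to the constant function $1$.
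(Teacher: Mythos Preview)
Your proposal is correct and follows essentially the same approach as the paper: both use Proposition \ref{grtrans} to identify the differential of $\mathcal{P}^{-n}$ with the map $H^2(L)\to H^{2-n}_{\overline{\partial}}(M)$, $a\mapsto a\rho$, and then appeal to the preceding proposition (that $a\mapsto a\rho$ is an isomorphism) to conclude injectivity. Your write-up is more explicit about why $\kappa$ is the identity and why $[\rho]$ corresponds to $1\in H^0(L)$, but the argument is the same.
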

\begin{proof}
By Proposition \ref{grtrans}, the differential of $\mathcal{P}^{-n}$ at a point corresponding to a generalized Calabi-Yau structure $\rho$ is given by the map $H^2(L) \to H^{2-n}_{\overline{\partial}}(M)$ which sends a class $a \in H^2(L)$ to $a \rho \in H^{2-n}_{\overline{\partial}}(M)$, followed by the inclusion of $H^{2-n}_{\overline{\partial}}(M)$ in $H^\tau(M,H)_{\mathbb{C}}/ H^{-n}_{\overline{\partial}}(M) = H^{2-n}_{\overline{\partial}}(M) \oplus H^{4-n}_{\overline{\partial}}(M) \oplus \dots \oplus H^{n}_{\overline{\partial}}(M)$ which is clearly injective.
\end{proof}

%%%%%%%%%%%%%%%%%%%%%%%%%%%%%%%%%%%%%%%%%%%%%%%%%%%%%%%%%%%%%%%%%%%%%%%%%%%%%%%%%%%%%%%%%%%%%%%%%%%%%
%%%%%%%%%%%%%%%%%%%%%%%%%%%%%%%%%%%%%%%%%%%%%%%%%%%%%%%%%%%%%%%%%%%%%%%%%%%%%%%%%%%%%%%%%%%%%%%%%%%%%
%%%%%%%%%%%%%%%%%%%%%%%%%%%%%%%%%%%%%%%%%%%%%%%%%%%%%%%%%%%%%%%%%%%%%%%%%%%%%%%%%%%%%%%%%%%%%%%%%%%%%
%%%%%%%%%%%%%%%%%%%%%%%%%%%%%%%%%%%%%%%%%%%%%%%%%%%%%%%%%%%%%%%%%%%%%%%%%%%%%%%%%%%%%%%%%%%%%%%%%%%%%

\section{Variation of Hodge structure for generalized K\"ahler manifolds}\label{vhsgkm}

\subsection{Generalized K\"ahler geometry}\label{gkg}

Let $E$ be a Courant algebroid. A {\em generalized metric} on $E$ is an endomorphism $G : E \to E$ such that $G$ preserves the pairing $\langle \, , \, \rangle$ in the sense that $\langle Ga , Gb \rangle = \langle a , b \rangle$ and the induced bilinear form $\langle Ga , b \rangle$ is symmetric and positive definite. Note that by these identities we have that $G^2=1$. We can then decompose $E$ into the $1$ and $-1$ eigenspaces $V^{\pm}$ of $G$. It follows that $V^+$ is a maximal positive definite subspace of $E$ with respect to $\langle \, , \rangle$ and $V^-$ is a maximal negative definite subspace, the orthogonal complement of $V^+$. Suppose that $J$ is a generalized complex structure on $E$. We say that $J$ and $G$ are {\em compatible} if they commute as endomorphisms of $E$; $JG = GJ$. In this case the metric associated to $G$ is hermitian with respect to $J$, that is $\langle GJa ,Jb \rangle = \langle Ga,b \rangle$.

Let $J_1,G$ be a generalized complex structure and compatible generalized metric. Since $J_1$ and $G$ commute we find that $J_2 = GJ_1$ is a generalized almost complex structure. In general $J_2$ will not be integrable. We say that the pair $J_1,G$ defines a {\em generalized K\"ahler} structure \cite{gual} if $J_2$ is integrable. Equivalently a generalized K\"ahler structure is a pair of commuting generalized complex structures $J_1,J_2$ such that $G = -J_1J_2$ is a generalized metric.\\

Let $E$ be an exact Courant algebroid over a manifold $M$ of dimension $2n$. Choose a closed $3$-form $H$ on $M$ so that we can identify $E$ with $TM \oplus T^*M$ with $H$-twisted bracket. Suppose that $J_1,J_2,G$ defines a generalized K\"ahler structure on $E$. We note that the bundle of differential forms $S$ on $M$ has two decompositions corresponding to the eigenspaces of the Clifford actions of $J_1,J_2$ on $S$. Since $J_1,J_2$ commute there exists a simultaneous eigenspace decomposition. We let $U_{r,s}$ denote the subspace of $S$ which is the $-ir$ eigenspace of $J_1$ and $-is$ eigenspace of $J_2$. To better understand this decomposition let us first note that there is a related decomposition of $E_{\mathbb{C}}$, namely
\begin{equation*}
E_{\mathbb{C}} = L_1^+ \oplus L_1^- \oplus \overline{L_1^+} \oplus \overline{L_1^-}.
\end{equation*}
For $i=1,2$ let $L_i$ denote the $i$ eigenspace of $J_i$. Then $L_1^+$ is by definition $L_1 \cap L_2$ and $L_1^-$ is $L_1 \cap \overline{L_2}$. It follows that $L_1 = L_1^+ \oplus L_1^-$ and $L_2 = L_1^+ \oplus \overline{L_1^-}$. Additionally it is clear that $V^+_{\mathbb{C}} = V^+ \otimes \mathbb{C} = L_1^+ \oplus \overline{L_1^+}$, where $V^+$ is the $1$ eigenspace of $G$, similarly $V^-_{\mathbb{C}} = L_1^- \oplus \overline{L_1^-}$. The fact that $J_1$ and $J_2$ are both integrable implies that the subbundles $L_1^+,L_1^-$ are integrable in the sense that their spaces of sections are closed under the Courant bracket.

The Clifford action of $L_1^+$ sends $U_{r,s}$ to $U_{r-1,s-1}$ and the Clifford action of $L_1^-$ sends $U_{r,s}$ to $U_{r-1,s+1}$. It follows that we may identify $U_{r,s}$ with $\wedge^p (L_1^+)^* \otimes \wedge^q (L_1^-)^* \otimes U_{-n,0}$ where $r = p+q - n$, $s = p-q$. Note in particular that $U_{r,s}$ is zero unless $r+s = n \, ({\rm mod} \, 2)$. It also follows that the twisted differential $d_H$ can be decomposed into four components:
\begin{equation*}
d_H = \delta_+ + \delta_- + \overline{\delta_+} + \overline{\delta_-},
\end{equation*}
where the bi-degrees of $\delta_+,\delta_-,\overline{\delta_+},\overline{\delta_-}$ are $(-1,-1),(-1,1),(1,1),(1,-1)$ respectively. For $i=1,2$, let $\partial_i,\overline{\partial_i}$ be the $\partial$ and $\overline{\partial}$ operators for $J_i$. Then we clearly have $\overline{\partial_1} = \overline{\delta_+} + \overline{\delta_-}$ and $\overline{\partial_2} = \overline{\delta_+} + \delta_-$.

The generalized metric $G$ induces a hermitian metric on the bundle $S_{\mathbb{C}}$ of complex forms and taking $M$ to be compact, we get a corresponding $L^2$ metric on the space of sections of $S_{\mathbb{C}}$. We may then define formal adjoints $\delta_+^*,\delta_-^*$ for $\delta_+,\delta_-$. From \cite{gualhod} we have the following relations, known as the generalized K\"ahler identites:
\begin{align*}
\delta_+^* = &-\overline{\delta_+} \\
\delta_-^* = & \; \; \overline{\delta_-}.
\end{align*}
To the operators $\overline{\delta_-},\overline{\delta_+},\overline{\partial_1},\overline{\partial_2},d_H$ we may define associated Laplacians $\linebreak \Delta_{\overline{\delta_-}},\Delta_{\overline{\delta_+}},\Delta_{\overline{\partial_1}},\Delta_{\overline{\partial_2}},\Delta_{d_H}$. As a consequence of the generalized K\"ahler identites we get that the Laplacians coincide up to constant factors:
\begin{equation*}
\Delta_{\overline{\delta_-}} = \Delta_{\overline{\delta_+}} = 2\Delta_{\overline{\partial_1}} = 2\Delta_{\overline{\partial_2}} = 4\Delta_{d_H}.
\end{equation*}
This has a number of implications for the twisted cohomology of a compact generalized K\"ahler manifold. First of all it implies that the $\partial \overline{\partial}$-lemma holds for both $J_1$ and $J_2$. Second it implies that there is a bigraded decomposition of twisted cohomology. Let $H^{r,s}_{\overline{\delta_+}}(M)$ denote the degree $(r,s)$ $\overline{\delta_+}$ cohomology. It follows that
\begin{align*}
H^*(M,H)_\mathbb{C} =& \bigoplus_{r+s = n \, ({\rm mod} \, 2)} H^{r,s}_{\overline{\delta_+}}(M) \\
H^k_{\overline{\partial_1}}(M) =& \bigoplus_{k+s = n \, ({\rm mod} \, 2)} H^{k,s}_{\overline{\delta_+}}(M) \\
H^k_{\overline{\partial_2}}(M) =& \bigoplus_{r+k = n \, ({\rm mod} \, 2)} H^{r,k}_{\overline{\delta_+}}(M).
\end{align*}

%%%%%%%%%%%%%%%%%%%%%%%%%%%%%%%%%%%%%%%%%%%%%%%%%%%%%%%%%%%%%%%%%%%%%%%%%%%%%%%%%%%%%%%%%%%%%%%%%%%%%
%%%%%%%%%%%%%%%%%%%%%%%%%%%%%%%%%%%%%%%%%%%%%%%%%%%%%%%%%%%%%%%%%%%%%%%%%%%%%%%%%%%%%%%%%%%%%%%%%%%%%

\subsection{Lie algebroid decompositions}\label{lad}

If $(M,E,J_1,J_2)$ is a generalized K\"ahler manifold then we have seen that the Lie algebroids $L_1,L_2$ corresponding to $J_1,J_2$ admit decompositions $L_1 = L_1^+ \oplus L_1^-$, $L_2 = L_1^+ \oplus \overline{L_1^-}$. Additionally $L_1^+,L_1^-$ and their conjugates are themselves Lie algebroids with bracket and anchor inherited from their inclusion in the complexification $E_{\mathbb{C}}$ of the Courant algebroid $E$. Let $\rho_1^+ : L_1^+ \to TM_{\mathbb{C}}$ denote the anchor for $L_1^+$, which by definition is just the restriction of $\rho$, the anchor of $E_{\mathbb{C}}$ to $L_1^+$. Similarly let $\rho_1^-,\overline{\rho_1^+},\overline{\rho_1^-}$ denote the anchors on $L_1^-,\overline{L_1^+},\overline{L_1^-}$. The direct sum decomposition $L_1 = L_1^+ \oplus L_1^-$ has the properties that it is a decomposition into subalgebras and the anchors are related by $\rho_1(a \oplus b) = \rho_1^+(a) + \rho_1^-(b)$, where $\rho_1$ is the anchor for $L_1$. The corresponding properties also apply to the decomposition $L_2 = L_1^+ \oplus \overline{L_1^-}$. We would like to consider such decompositions of Lie algebroids more abstractly and examine the consequences for the Lie algebroid cohomology.\\

Let $(A,\rho)$ be a Lie algebroid and suppose that there are Lie algebroids $(A_1,\rho_1)$, $(A_2,\rho_2)$ with the properties that as vector bundles $A = A_1 \oplus A_2$, the spaces $\Gamma(A_1),\Gamma(A_2)$ are subalgebras of $\Gamma(A)$ and the anchors are related by $\rho = \rho_1 + \rho_2$. We say that $A = A_1 \oplus A_2$ is a Lie algebroid decomposition. The graded differential complex $(\wedge^* A^*, d_A)$ admits a bigrading by setting $\wedge^{p,q} A^* = \wedge^p A_1^* \otimes \wedge^q A_2^*$. Let $a_1$ be a section of $A_1$ and $a_2$ a section of $A_2$. We then define $a_1 \cdot a_2$ to be the projection of $[a_1,a_2]$ to $A_2$. Similarly define $a_2 \cdot a_1$ to be the projection of $[a_2,a_1]$ to $A_1$. The Lie algebroid identities for $A$ immediately imply the following identities:
\begin{align*}
(fa_1) \cdot (a_2) &= f (a_1 \cdot a_2) \\
(a_1)\cdot (fa_2) &= f(a_1 \cdot a_2) + \rho_1(a_1)(f)a_2 \\
\left[ a_1,b_1 \right] \cdot a_2 &= a_1 \cdot (b_1 \cdot a_2) - b_1 \cdot (a_1 \cdot a_2),
\end{align*}
where $a_1,b_1$ are sections of $A_1$, $a_2$ is a section of $A_2$ and $f$ a function. These identities say that $A_2$ has the structure of a Lie algebroid module for $A_1$. Similarly we get that $A_1$ is a Lie algebroid module for $A_2$. Since $A_2$ is a Lie algebroid module for $A_1$ we get an induced Lie algebroid module structure on $\wedge^q A_2^*$. In fact the module structure is as follows:
\begin{equation*}
(X \cdot \alpha)(Y_1, \dots , Y_q) = \rho_1(X)\alpha(Y_1,\dots,Y_q) + \sum_{j=1}^q (-1)^j \alpha( X \cdot Y_j,X_1,\dots,\hat{X_j},\dots,X_q),
\end{equation*}
where $X \in \Gamma(A_1)$, $Y_1,\dots,Y_q \in \Gamma(A_2)$ and $\alpha \in \Gamma(\wedge^q A_2^*)$. If $W$ is any Lie algebroid module for $A_1$ we get a corresponding differential graded complex $\Gamma(W) \buildrel d_{A_1} \over \to \Gamma(W \otimes A_1^*) \buildrel d_{A_1} \over \to \Gamma(W \otimes \wedge^2 A_1^*) \to \dots$, in particular this applies to $W = \wedge^q A_2^*$ so we get an operator $d_{A_1} : \Gamma( \wedge^p A_1^* \otimes \wedge^q A_2^*) \to \Gamma( \wedge^{p+1} A_1^* \otimes \wedge^q A_2^*)$. Similarly by swapping $A_1$ and $A_2$ we get an operator $d_{A_2} : \Gamma( \wedge^p A_1^* \otimes \wedge^q A_2^*) \to \Gamma( \wedge^{p} A_1^* \otimes \wedge^{q+1} A_2^*)$.
\begin{proposition}
For any Lie algebroid decomposition $A = A_1 \oplus A_2$ we have $d_A = d_{A_1} + d_{A_2}$. Moreover $d_{A_1}d_{A_2} + d_{A_2}d_{A_1} = 0$.
\end{proposition}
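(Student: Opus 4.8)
The plan is to use the fact that $d_A$, $d_{A_1}$ and $d_{A_2}$ are all graded derivations of the bigraded exterior algebra $\wedge^* A^* = \bigoplus_{p,q} \wedge^p A_1^* \otimes \wedge^q A_2^*$ (with $A_1^*$, $A_2^*$ identified with the annihilators of $A_2$, $A_1$ inside $A^*$), so that each is determined by its values on the generators, namely on $\mathcal{C}^\infty(M) = \wedge^0 A^*$ and on $\Gamma(A^*) = \Gamma(A_1^*) \oplus \Gamma(A_2^*)$. Writing out the Koszul formula for $d_A$ and decomposing it by bidegree, we get $d_A = d_A^{(1,0)} + d_A^{(0,1)} + d_A^{(2,-1)} + d_A^{(-1,2)}$, since $d_A$ raises total degree by one; each bihomogeneous piece is again a graded derivation.

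The first step is to eliminate the two off-diagonal pieces. For degree reasons $d_A^{(2,-1)}$ vanishes on functions and on $\Gamma(A_1^*)$, so it is enough to evaluate it on $\xi \in \Gamma(A_2^*)$ against $X, Y \in \Gamma(A_1)$; the Koszul formula gives
\[(d_A\xi)(X,Y) = \rho_1(X)(\xi(Y)) - \rho_1(Y)(\xi(X)) - \xi([X,Y]),\]
which vanishes because $\xi$ annihilates $A_1$ and $[X,Y] \in \Gamma(A_1)$, as $\Gamma(A_1)$ is a subalgebra. A derivation killing all generators is zero, so $d_A^{(2,-1)} = 0$, and symmetrically $d_A^{(-1,2)} = 0$ using that $\Gamma(A_2)$ is a subalgebra.

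It remains to identify $d_A^{(1,0)}$ with $d_{A_1}$ and $d_A^{(0,1)}$ with $d_{A_2}$, again by checking on generators. On functions this is immediate from $\rho = \rho_1 + \rho_2$. On a $1$-form $\xi \in \Gamma(A_1^*)$ the $(2,0)$-component of $d_A\xi$ is, by the Koszul formula, exactly the Lie algebroid differential of $A_1$ applied to $\xi$ with trivial coefficients. On a $1$-form $\xi \in \Gamma(A_2^*)$ the $(1,1)$-component of $d_A\xi$, evaluated on $X \in \Gamma(A_1)$ and $Y \in \Gamma(A_2)$, equals $\rho_1(X)(\xi(Y)) - \xi(X\cdot Y)$, where $X\cdot Y$ is the $A_2$-projection of $[X,Y]$; this is precisely $(d_{A_1}\xi)(X)(Y)$ for the $A_1$-module structure on $A_2^* = \wedge^1 A_2^*$ recorded just before the proposition. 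Since all the operators involved are graded derivations agreeing on a generating set, we conclude $d_A^{(1,0)} = d_{A_1}$, $d_A^{(0,1)} = d_{A_2}$, hence $d_A = d_{A_1} + d_{A_2}$. I expect this identification to be the only real work, since it is where the abstract module differential $d_{A_1}$ on $\wedge^q A_2^*$-valued forms must be recognized as a homogeneous component of the genuine Lie algebroid differential of $A$; the remainder of the argument is purely formal.

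For the second assertion, we expand $0 = d_A^2 = (d_{A_1} + d_{A_2})^2 = d_{A_1}^2 + (d_{A_1}d_{A_2} + d_{A_2}d_{A_1}) + d_{A_2}^2$ and observe that the three summands are bihomogeneous of bidegrees $(2,0)$, $(1,1)$ and $(0,2)$ respectively; since the bigrading is a direct sum, each vanishes separately, which gives $d_{A_1}d_{A_2} + d_{A_2}d_{A_1} = 0$ as claimed (and, as a byproduct, $d_{A_1}^2 = d_{A_2}^2 = 0$).
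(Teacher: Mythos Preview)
Your proof is correct and follows essentially the same strategy as the paper: reduce to generators (functions and $1$-forms) and use that the operators involved are graded derivations. The organization differs slightly. You first decompose $d_A$ by bidegree into four pieces, kill the off-diagonal pieces $d_A^{(2,-1)}$ and $d_A^{(-1,2)}$ using the subalgebra property, and then identify the remaining two with $d_{A_1}$ and $d_{A_2}$; the paper instead verifies $d_A\xi = d_{A_1}\xi + d_{A_2}\xi$ directly on $1$-forms and then inducts via a Leibniz identity, which requires separately checking that $d_{A_1}$ and $d_{A_2}$ are derivations with respect to wedging by $\xi\in\Gamma(A_1^*)$ (the paper notes this is the statement that the wedge product is a morphism of $A_2$-modules). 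Your route gets this for free, since bihomogeneous components of a derivation on a bigraded algebra are automatically derivations. For the anticommutator, the paper invokes $d_{A_1}^2 = d_{A_2}^2 = 0$ as known facts about module-valued Lie algebroid cohomology, whereas you extract all three vanishings from $d_A^2=0$ by bidegree; both are fine.
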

\begin{proof}
Let $\xi$ be a section of $A_1^*$. It is a simple calculation to see that $d_A(\xi) = d_{A_1}\xi + d_{A_2}\xi$. Similarly one can check this identity when $\xi$ is a section of $A_2^*$. Suppose again that $\xi$ is a section of $A_1$. Then for any $\alpha \in \Gamma( \wedge^{p,q} A^*)$ we have
\begin{align*}
d_{A_1}( \xi \wedge \alpha) &= d_{A_1}(\xi) \wedge \alpha - \xi \wedge d_{A_1} \alpha \\
d_{A_2}( \xi \wedge \alpha) &= d_{A_2}(\xi) \wedge \alpha - \xi \wedge d_{A_2} \alpha.
\end{align*}
The second of these identities is essentially the fact that the wedge product $A_1^* \otimes \wedge^k A_1^* \to \wedge^{k+2} A_1^*$ is a morphism of Lie algebroid modules for $A_2$. Adding these we get
\begin{equation}\label{leib}
(d_{A_1} + d_{A_2})( \xi \wedge \alpha) = d_A(\xi) \wedge \alpha - \xi \wedge (d_{A_1} + d_{A_2}) \alpha.
\end{equation}
The same argument can be applied when $\xi$ is a section of $A_2^*$. Next we observe that the identity $d_A = d_{A_1} + d_{A_2}$ is clearly true for sections of $\wedge^{0,0} A^*$ since $\rho = \rho_1 + \rho_2$. From here we proceed by induction using (\ref{leib}) to see that $d_A = d_{A_1} + d_{A_2}$ in all degrees. The identity $d_{A_1}d_{A_2} + d_{A_2}d_{A_1}$ easily follows since $d_A^2 = d_{A_1}^2 = d_{A_2}^2 = 0$.
\end{proof}

%%%%%%%%%%%%%%%%%%%%%%%%%%%%%%%%%%%%%%%%%%%%%%%%%%%%%%%%%%%%%%%%%%%%%%%%%%%%%%%%%%%%%%%%%%%%%%%%%%%%%
%%%%%%%%%%%%%%%%%%%%%%%%%%%%%%%%%%%%%%%%%%%%%%%%%%%%%%%%%%%%%%%%%%%%%%%%%%%%%%%%%%%%%%%%%%%%%%%%%%%%%

\subsection{Generalized K\"ahler deformations}\label{gkd}

We would now like to consider families of generalized K\"ahler structures and the related deformation theory. For generalized K\"ahler structures there is a deformation complex describing infinitesimal deformations, but it is not elliptic except when the manifold is two dimensional. Therefore even on a compact manifold we do not expect a finite dimensional space of infinitesimal deformations. This phenomenon also occurs for ordinary K\"ahler manifolds. For example if $(M,I,\omega)$ is a K\"ahler manifold then for all functions $\phi$ sufficiently small in an appropriate sense, we get a new K\"ahler structure by replacing the K\"ahler form $\omega$ with $\omega + i\partial \overline{\partial} \phi$. From this one sees that K\"ahler structures admit many non-trivial deformations and we expect similar behavior from generalized K\"ahler structures. Nevertheless we now define smooth families of generalized K\"ahler manifolds.
\begin{definition}
Let $X \to B$ be a fiber bundle, $E \to B$ a family of exact Courant algebroids over $B$ obtained by reduction of an exact Courant algebroid $F$ on $X$. A generalized K\"ahler structure $J_1,J_2$ on $E$ will be called a {\em smooth family of generalized K\"ahler manifolds}. 
\end{definition}
One can also consider families of generalized K\"ahler manifolds where one of the complex structures varies in a holomorphic family.\\

As usual, by locally trivializing the family we can view such a family as a fixed manifold $M$ and closed $3$-form $H$, but with $J_1,J_2$ varying. Small deformations of $J_1,J_2$ can be expressed as in Section \ref{gksc} by taking the $i$ eigenspaces to be graphs over $L_1,L_2$ as in (\ref{ieigdef}). Thus there exists sections $\epsilon_1 \in \Gamma( \wedge^2 L_1^*)$, $\epsilon_2 \in \Gamma( \wedge^2 L_2^*)$ such that the corresponding $i$ eigenspaces are
\begin{align*}
L'_1 &= \{ X + \epsilon_1 X \, | \, X \in L_1 \} \\
L'_2 &= \{ X + \epsilon_2 X \, | \, X \in L_2 \}.
\end{align*}
Let $J_1',J_2'$ denote the corresponding generalized almost complex structures. If $J_1',J_2'$ commute then provided $\epsilon_1,\epsilon_2$ are sufficiently small we will have that $G' = -J_1'J_2'$ is a generalized metric. The requirement that $J_1',J_2'$ commute imposes a non-linear compatibility condition on $\epsilon_1,\epsilon_2$. Instead of working out this full condition we will consider only infinitesimal deformations and the compatibility condition will be linearized. Thus we suppose $\epsilon_i = \epsilon_i(t)$ depends smoothly on a parameter $t$ such that $\epsilon_i(0) = 0$ and we differentiate at $t=0$. We have
\begin{equation*}
J'_i(t) = J_i + t(2i \epsilon_i'(0) - 2i \overline{\epsilon'_i(0)}) + O(t^2).
\end{equation*}
Now since $\wedge^2 L_1^* = \wedge^2 (L_1^+)^* \oplus \left( (L_1^+)^* \otimes (L_1^-)^* \right) \oplus \wedge^2 (L_1^-)^*$ we may write $\epsilon_1'(0) = (\epsilon_1^{(1)},\epsilon_1^{(2)},\epsilon_1^{(3)})$ where $\epsilon_1^{(1)} \in \Gamma( \wedge^2 (L_1^+))$, $\epsilon_1^{(2)} \in \Gamma( (L_1^+)^* \otimes (L_1^-)^*)$, $\epsilon_1^{(3)} \in \Gamma( \wedge^2 (L_1^-)^*)$. Similarly replacing $L_1^-$ by $\overline{L_1^-}$ we may write $\epsilon_2'(0) = (\epsilon_2^{(1)},\epsilon_2^{(2)},\epsilon_2^{(3)})$. To first order the condition $[J'_1(t),J'_2(t)] = 0$ becomes the following compatibility conditions:
\begin{align}
\epsilon_2^{(1)} &= \epsilon_1^{(1)} \label{pccond1}\\
\epsilon_2^{(3)} &= \overline{\epsilon_1^{(3)}}. \label{pccond2}
\end{align}
We see that at the infinitesimal level a deformation of generalized K\"ahler structure has four components $\epsilon_1^{(1)},\epsilon_1^{(2)},\epsilon_1^{(3)},\epsilon_2^{(2)}$. Although the components so far are independent the integrability condition imposes relations between the derivatives of these components. The integrability condition is that $J_1',J_2'$ are integrable, which at the infinitesimal level translates to the conditions $d_{L_1} ( \epsilon_1^{(1)},\epsilon_1^{(2)},\epsilon_1^{(3)}) = 0$, $d_{L_2}( \epsilon_1^{(1)},\epsilon_2^{(2)},\overline{\epsilon_1^{(3)}}) = 0$.

Using the decomposition $L_1 = L_1^+ \oplus L_1^-$ we get projection maps $\wedge^k L_1^* \to \wedge^k (L_1^{\pm})^*$ which induce maps $\pi_1^{\pm} : H^k(L_1) \to H^k(L_1^{\pm} )$. Similarly we have maps $\pi_2^+ : H^k(L_2) \to H^k(L_1^+)$ and $\pi_2^- : H^k(L_2) \to H^k(\overline{L_1^-})$. Also observe that there is a natural antilinear isomorphism $H^k( \overline{L_1^-}) \to H^k( L_1^-)$. Let $\overline{\pi_2^-} : H^k(L_2) \to H^k(L_1^-)$ denote the composition of $\pi_2^-$ and this isomorphism.
\begin{proposition}\label{infdefgk}
Let $\rho_i \in H^2(L_i)$ for $i=1,2$ be the generalized Kodaira-Spencer classes corresponding to an infinitesimal deformation of generalized K\"ahler structure. Then
\begin{align}
\pi_1^+(\rho_1) &= \pi_2^+(\rho_2) \label{ccond1} \\
\pi_1^-(\rho_1) &= \overline{\pi_2^-}(\rho_2). \label{ccond2}
\end{align}
Moreover any pair $\rho_i \in H^2(L_i)$ satisfying (\ref{ccond1}),(\ref{ccond2}) can be realized as an infinitesimal deformation of generalized K\"ahler structure in the sense that there exists sections $\epsilon_1^{(1)},\epsilon_1^{(2)},\epsilon_1^{(3)},\epsilon_2^{(2)}$ of $\wedge^2 (L_1^+)^*$, $(L_1^+)^* \otimes (L_1^-)^*$, $\wedge^2 (L_1^-)^*$, $(L_1^+)^* \otimes (\overline{L_1^-})^*$ such that $d_{L_1} ( \epsilon_1^{(1)},\epsilon_1^{(2)},\epsilon_1^{(3)}) = 0$, $d_{L_2}( \epsilon_1^{(1)},\epsilon_2^{(2)},\overline{\epsilon_1^{(3)}}) = 0$ and $\rho_1 = [ ( \epsilon_1^{(1)},\epsilon_1^{(2)},\epsilon_1^{(3)})]$, $\rho_2 = [( \epsilon_1^{(1)},\epsilon_2^{(2)},\overline{\epsilon_1^{(3)}})]$.
\end{proposition}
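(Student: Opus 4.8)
The plan is to work entirely in the bigraded picture of Section~\ref{lad}. Taking $A = L_1$, $A_1 = L_1^+$, $A_2 = L_1^-$, one has $d_{L_1} = d_{L_1^+} + d_{L_1^-}$, where relative to the bigrading $\wedge^p(L_1^+)^* \otimes \wedge^q(L_1^-)^*$ the operators $d_{L_1^+}$ and $d_{L_1^-}$ have bidegrees $(1,0)$ and $(0,1)$; similarly $d_{L_2} = d_{L_1^+} + d_{\overline{L_1^-}}$ for the bigrading $\wedge^p(L_1^+)^* \otimes \wedge^q(\overline{L_1^-})^*$. On $\wedge^\bullet(L_1^+)^*$ the operator $d_{L_1^+}$ is the Lie algebroid differential of $L_1^+$, the same whether $L_1^+$ is viewed inside $L_1$ or inside $L_2$ (the coefficient module $\wedge^0(\,\cdot\,)^* = \mathbb{C}$ being trivial), and analogously for $d_{L_1^-}$ on $\wedge^\bullet(L_1^-)^*$ and $d_{\overline{L_1^-}}$ on $\wedge^\bullet(\overline{L_1^-})^*$. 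Consequently the projection onto bidegree $(\bullet,0)$ is a cochain map $(\wedge^\bullet L_1^*, d_{L_1}) \to (\wedge^\bullet(L_1^+)^*, d_{L_1^+})$ inducing $\pi_1^+$, and similarly for $\pi_1^-$ (projection onto bidegree $(0,\bullet)$) and $\pi_2^\pm$; moreover $\overline{\pi_2^-}$ is $\pi_2^-$ followed by the conjugation $\wedge^\bullet(\overline{L_1^-})^* \to \wedge^\bullet(L_1^-)^*$, which intertwines $d_{\overline{L_1^-}}$ with $d_{L_1^-}$.

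I would then prove necessity directly. The deformation is given by a $d_{L_1}$-closed $\epsilon_1'(0) = (\epsilon_1^{(1)},\epsilon_1^{(2)},\epsilon_1^{(3)})$ and, using the infinitesimal compatibility conditions (\ref{pccond1}), (\ref{pccond2}), by a $d_{L_2}$-closed $\epsilon_2'(0) = (\epsilon_1^{(1)},\epsilon_2^{(2)},\overline{\epsilon_1^{(3)}})$, with $\rho_1 = [\epsilon_1'(0)]$, $\rho_2 = [\epsilon_2'(0)]$. Applying the cochain maps above, $\pi_1^+(\rho_1)$ is represented by $\epsilon_1^{(1)}$ and so is $\pi_2^+(\rho_2)$, giving (\ref{ccond1}); likewise $\pi_1^-(\rho_1)$ is represented by $\epsilon_1^{(3)} \in \wedge^2(L_1^-)^*$, while $\pi_2^-(\rho_2)$ is represented by $\overline{\epsilon_1^{(3)}} \in \wedge^2(\overline{L_1^-})^*$, so applying the conjugation yields $\overline{\pi_2^-}(\rho_2) = [\epsilon_1^{(3)}] = \pi_1^-(\rho_1)$, which is (\ref{ccond2}).

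For sufficiency, I would start from any $d_{L_1}$-closed representative $\epsilon_1 = (\epsilon_1^{(1)},\epsilon_1^{(2)},\epsilon_1^{(3)})$ of $\rho_1$ and any $d_{L_2}$-closed representative $\eta = (\eta^{(1)},\eta^{(2)},\eta^{(3)})$ of $\rho_2$ (bidegrees $(2,0),(1,1),(0,2)$ for $L_1^+ \oplus \overline{L_1^-}$), and then correct $\eta$ by $d_{L_2}$-coboundaries. First, $\epsilon_1^{(1)}$ and $\eta^{(1)}$ are $d_{L_1^+}$-closed and both represent $\pi_1^+(\rho_1) = \pi_2^+(\rho_2)$, so $\eta^{(1)} - \epsilon_1^{(1)} = d_{L_1^+}\beta$ with $\beta \in \Gamma((L_1^+)^*) \subset \Gamma(L_2^*)$; since $d_{L_2}\beta = d_{L_1^+}\beta + d_{\overline{L_1^-}}\beta$ has components only in bidegrees $(2,0)$ and $(1,1)$, replacing $\eta$ by $\eta - d_{L_2}\beta$ makes the $(2,0)$-component equal to $\epsilon_1^{(1)}$ and leaves the $(0,2)$-component untouched. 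Next, the $(0,2)$-component and $\overline{\epsilon_1^{(3)}}$ are $d_{\overline{L_1^-}}$-closed and, by (\ref{ccond2}), both represent $\pi_2^-(\rho_2)$, so they differ by $d_{\overline{L_1^-}}\gamma$ with $\gamma \in \Gamma((\overline{L_1^-})^*) \subset \Gamma(L_2^*)$; since $d_{L_2}\gamma = d_{L_1^+}\gamma + d_{\overline{L_1^-}}\gamma$ has components only in bidegrees $(1,1)$ and $(0,2)$, subtracting it corrects the $(0,2)$-component to $\overline{\epsilon_1^{(3)}}$ without touching the $(2,0)$-component. The resulting $d_{L_2}$-closed representative of $\rho_2$ has the form $(\epsilon_1^{(1)},\epsilon_2^{(2)},\overline{\epsilon_1^{(3)}})$, and together with $\epsilon_1$ this yields the required infinitesimal deformation, the four components being $\epsilon_1^{(1)} \in \Gamma(\wedge^2(L_1^+)^*)$, $\epsilon_1^{(2)} \in \Gamma((L_1^+)^* \otimes (L_1^-)^*)$, $\epsilon_1^{(3)} \in \Gamma(\wedge^2(L_1^-)^*)$ and $\epsilon_2^{(2)} \in \Gamma((L_1^+)^* \otimes (\overline{L_1^-})^*)$.

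The main obstacle is purely organizational: keeping track of which bigraded components each exact correction changes. What makes the sufficiency argument close without circularity is that the correction fixing the $(2,0)$-component is a $d_{L_2}$-coboundary of a section of $(L_1^+)^*$ and hence has no $(0,2)$-component, while the correction fixing the $(0,2)$-component is a $d_{L_2}$-coboundary of a section of $(\overline{L_1^-})^*$ and hence has no $(2,0)$-component; the two adjustments are therefore independent and may be applied one after the other. The preliminary verification that the projections are genuine cochain maps --- equivalently that the sub-differentials on $\wedge^\bullet(L_1^\pm)^*$ and $\wedge^\bullet(\overline{L_1^-})^*$ agree with the respective Lie algebroid differentials --- is routine given Section~\ref{lad}.
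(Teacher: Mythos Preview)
Your proposal is correct and follows essentially the same approach as the paper: necessity is deduced from the pointwise compatibility conditions (\ref{pccond1}),(\ref{pccond2}) via the projection cochain maps, and sufficiency is obtained by correcting an arbitrary $d_{L_2}$-closed representative of $\rho_2$ by $d_{L_2}$ of a section of $(L_1^+)^* \oplus (\overline{L_1^-})^*$ so that its $(2,0)$ and $(0,2)$ components match $\epsilon_1^{(1)}$ and $\overline{\epsilon_1^{(3)}}$. The only difference is cosmetic: you perform the two corrections sequentially and explicitly note their independence, whereas the paper applies them simultaneously as $d_{L_2}(r+s)$.
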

\begin{proof}
Equations (\ref{ccond1}),(\ref{ccond2}) follow immediately from the corresponding pointwise conditions (\ref{pccond1}),(\ref{pccond2}). Now suppose $\rho_1,\rho_2$ satisfy (\ref{ccond1}),(\ref{ccond2}). Then we may write $\rho_i = [(\epsilon_i^{(1)},\epsilon_i^{(2)},\epsilon_i^{(3)}]$. The conditions on $\rho_1,\rho_2$ immediately implies that there exists $r \in \Gamma( (L_1^+)^*)$ and $s \in \Gamma( (\overline{L_1^-})^*)$ such that $\epsilon_2^{(1)} = \epsilon_1^{(1)} + d_{L_1^+} r$, $\epsilon_2^{(3)} = \overline{\epsilon_1^{(3)}} + d_{\overline{L_1^-}} (s)$. The result follows by replacing $(\epsilon_2^{(1)},\epsilon_2^{(2)},\epsilon_2^{(3)})$ by $(\epsilon_2^{(1)},\epsilon_2^{(2)},\epsilon_2^{(3)}) + d_{L_2}( r + s)$. 
\end{proof}
\begin{remark}
Proposition \ref{infdefgk} describes gives the conditions on the generalized Kodaira-Spencer classes associated to a deformation of generalized K\"ahler structure. It is possible however to have distinct deformations of generalized K\"ahler structure which have the same Kodaira-Spencer classes. For instance this happens with ordinary K\"ahler manifolds $(M,I,\omega)$ where we deform $\omega$ to $\omega + i \partial \overline{\partial} f$, for some function $f$.

Note also that we have not determined the conditions for an infinitesimal deformation of generalized K\"ahler structure to be realized by an actual family of generalized K\"ahler structures.
\end{remark}

%%%%%%%%%%%%%%%%%%%%%%%%%%%%%%%%%%%%%%%%%%%%%%%%%%%%%%%%%%%%%%%%%%%%%%%%%%%%%%%%%%%%%%%%%%%%%%%%%%%%%
%%%%%%%%%%%%%%%%%%%%%%%%%%%%%%%%%%%%%%%%%%%%%%%%%%%%%%%%%%%%%%%%%%%%%%%%%%%%%%%%%%%%%%%%%%%%%%%%%%%%%

\subsection{Variation of Hodge structure}\label{gkvhs}

We note that a smooth family $(\pi : M \to B,E,J_1,J_2)$ of generalized K\"ahler structures with compact fibers is a good family for $J_1$ and $J_2$ in the sense of Section \ref{vhs}, so we get smoothly varying Hodge decompositions corresponding to $J_1$ and $J_2$ as well as the associated period mappings. We have also seen that for a generalized K\"ahler manifold the Hodge decompositions for $J_1,J_2$ are compatible in that we get a bigraded decomposition in twisted cohomology:
\begin{equation}\label{gkhsd}
H^*_\mathbb{C} = \bigoplus_{r,s} H^{r,s}_{\overline{\delta_+}}
\end{equation}
which we can think of as a sort of bigraded Hodge structure. Here $H^*$ is the flat vector bundle associated to the twisted Gauss-Manin connection and $H^{r,s}_{\overline{\delta_+}}$ the subbundle represented by degree $(r,s)$ forms. Note that is actually is a smooth subbundle, since by elliptic semicontinuity the dimension of $H^{r,s}_{\overline{\delta_+}}$ is upper semicontinuous, but by (\ref{gkhsd}) the sum of the dimensions of the $H^{r,s}_{\overline{\delta_+}}$ is constant.\\

If $e$ is any vector field on the base and $\nabla$ the twisted Gauss-Manin connection we have by Proposition \ref{grtrans} that the covariant derivative $\nabla_e$ sends $H^{r,s}_{\overline{\delta}_+}$ to $H^{r-2}_{\overline{\partial}_1} \oplus H^r_{\overline{\partial}_1} \oplus H^{r+2}_{\overline{\partial}_1}$ and also to $H^{s-2}_{\overline{\partial}_2} \oplus H^s_{\overline{\partial}_2} \oplus H^{s+2}_{\overline{\partial}_2}$. While $\nabla_e$ is a differential operator, if we project out the $(r,s)$ component we get $\mathcal{C}^\infty(B)$-linear maps. Thus we have bundle maps $TB \otimes H^{r,s}_{\overline{\delta}_+} \to H^{r',s'}_{\overline{\delta}_+}$, where $(r',s') \neq (r,s)$. We illustrate this pictorially as the sum of the following terms:
\begin{equation*}\xymatrix{
H^{r-2,s+2}_{\overline{\delta}_+} & H^{r,s+2}_{\overline{\delta}_+} & H^{r+2,s+2}_{\overline{\delta}_+} \\
H^{r-2,s}_{\overline{\delta}_+} & H^{r,s}_{\overline{\delta}_+} \ar[r] \ar[l] \ar[u] \ar[d] \ar[ur] \ar[ul] \ar[dr] \ar[dl] & H^{r+2,s}_{\overline{\delta}_+} \\
H^{r-2,s-2}_{\overline{\delta}_+} & H^{r,s-2}_{\overline{\delta}_+} & H^{r+2,s-2}_{\overline{\delta}_+}
}
\end{equation*}
By Proposition \ref{grtrans} we know that these maps are essentially given by the Clifford action of the generalized Kodaira-Spencer classes on twisted cohomology. To explain this further we first note that there is a natural action $H^k(L_1^+) \otimes H^{r,s}_{\overline{\delta}_+} \to H^{r+k,s+k}_{\overline{\delta}_+}$ induced by the Clifford action. This is due to the identity $\overline{\delta}_+( a \beta) = (d_{L_1^+}a) \beta + (-1)^a a \overline{\delta}_+ \beta$, where $a \in \Gamma(\wedge^k (L_1^+)^*)$, $\beta \in \Gamma(U_{r,s})$. Similarly there is an action $H^k(L_1^-) \otimes H^{r,s}_{\overline{\delta}_-} \to H^{r+2,s-2}_{\overline{\delta}_-}$. Combining this with the isomorphism $H^{r,s}_{\overline{\delta}_-} \simeq H^{r,s}_{\overline{\delta}_+}$ defines an action of $H^k(L_1^-)$ on $H^{*,*}_{\overline{\delta}_+}$. Along similar lines $H^k( (\overline{L_1^+})^*)$ and $H^k( (\overline{L_1^-})^*)$ can also be made to act on $H^{*,*}_{\overline{\delta}_+}$.

Let $\rho_i : TB \to H^2(L_i)$ represent the generalized Kodaira-Spencer classes, $\rho_1^+$ the projection of $\rho_1$ to $H^2(L_1^+)$ and $\rho_1^-$ the projection to $H^2(L_1^-)$. Then for instance the map $TB \otimes H^{r,s}_{\overline{\delta}_+} \to H^{r+2,s+2}_{\overline{\delta}_+}$ is the Clifford action of $\rho_1^+$ and the map $TB \otimes H^{r,s}_{\overline{\delta}_+} \to H^{r+2,s-2}_{\overline{\delta}_+}$ is the Clifford action of $\rho_1^-$. Combining this with complex conjugation we account for four of the variations:
\begin{equation*}\xymatrix{
H^{r-2,s+2}_{\overline{\delta}_+} &  & H^{r+2,s+2}_{\overline{\delta}_+} \\
 & H^{r,s}_{\overline{\delta}_+} \ar[ur]^{\rho_1^+} \ar[ul]_{\overline{\rho_1^-}} \ar[dr]^{\rho_1^-} \ar[dl]_{\overline{\rho_1^+}} &  \\
H^{r-2,s-2}_{\overline{\delta}_+} &  & H^{r+2,s-2}_{\overline{\delta}_+}
}
\end{equation*}
The other four maps however are not so easy to describe cohomologically, but we can still describe them using representatives. As in Proposition \ref{infdefgk}, let $\rho_1 = [ ( \epsilon_1^{(1)},\epsilon_1^{(2)},\epsilon_1^{(3)})]$, $\rho_2 = [( \epsilon_1^{(1)},\epsilon_2^{(2)},\overline{\epsilon_1^{(3)}})]$. Then if $\beta$ is a $d_H$-closed degree $(r,s)$-form representing a class $[\beta] \in H^{r,s}_{\overline{\delta}_+}$ then $\nabla [\beta] \, ({\rm mod} \, H^{r,s}_{\overline{\delta}_+})$ is a sum of terms which can be arranged pictorially as follows:
\begin{equation*}\xymatrix{
\overline{\epsilon_1^{(3)}}\beta & \epsilon_2^{(2)}\beta & \epsilon_1^{(1)} \beta \\
\overline{\epsilon_1^{(2)}}\beta & \ast & \epsilon_1^{(2)}\beta \\
\overline{\epsilon_1^{(1)}}\beta & \overline{\epsilon_2^{(2)}}\beta & \epsilon_1^{(3)} \beta
}
\end{equation*}
Note that the sum of the above eight terms is $d_H$-closed modulo terms of degree $(r+1,s+1),(r+1,s-1),(r-1,s+1),(r-1,s-1)$. The degree $(r+2,s)$ component of $\nabla \beta$ for instance is given by $\epsilon_1^{(2)} \beta$. 

There is a special case in which all of these terms have a simple cohomological description. If the class $\rho_1 \in H^2(L_1)$ can be represented by a triple $\rho_1 = [(\tilde{\epsilon}_1^{(1)},\tilde{\epsilon}_1^{(2)},\tilde{\epsilon}_1^{(3)})]$, where $\tilde{\epsilon}_1^{(j)}$ is $d_{L_1}$-closed for $j=1,2,3$ then we obtain a decomposition $\rho_1 = \rho_1^{(1)} + \rho_1^{(2)} + \rho_1^{(3)}$, where $\rho_1^{(j)} = [\tilde{\epsilon}_1^{(j)}] \in H^2(L_1)$. If this is possible it is straightforward to see that we must have that $\rho_1^{(1)} = \rho_1^+$, $\rho_1^{(3)} = \rho_1^-$ are the projections from $H^2(L_1)$ to $H^2(L_1^+),H^2(L_1^-)$ respectively. Also the class $\rho_1^{(2)}$ has the property that its Clifford action sends $H^{r,s}_{\overline{\delta}_+}$ to $H^{r+2,s}_{\overline{\delta}_+}$ and then the component $TB \otimes H^{r,s}_{\overline{\delta}_+} \to H^{r+2,s}_{\overline{\delta}_+}$ of the variation of Hodge structure is just the Clifford action by the class $\rho_1^{(2)}$. However we suspect that such a decomposition $\rho_1 = [(\tilde{\epsilon}_1^{(1)},\tilde{\epsilon}_1^{(2)},\tilde{\epsilon}_1^{(3)})]$ need not be true in general.

%%%%%%%%%%%%%%%%%%%%%%%%%%%%%%%%%%%%%%%%%%%%%%%%%%%%%%%%%%%%%%%%%%%%%%%%%%%%%%%%%%%%%%%%%%%%%%%%%%%%%
%%%%%%%%%%%%%%%%%%%%%%%%%%%%%%%%%%%%%%%%%%%%%%%%%%%%%%%%%%%%%%%%%%%%%%%%%%%%%%%%%%%%%%%%%%%%%%%%%%%%%

\bibliographystyle{amsplain}

\begin{thebibliography}{10}
\bibitem{lb}D. Li-Bland, AV-Courant algebroids and generalized CR structures, {\em Canad. J. Math.} {\bf 63}(4) (2011), 938-960.
\bibitem{bryp}J.-L. Brylinski, A differential complex for Poisson manifolds. {\em J. Differential Geom.} {\bf 28} (1988), no. 1, 93-114.
\bibitem{bcg}H. Bursztyn, G. Cavalcanti, M. Gualtieri, Reduction of Courant algebroids and generalized complex structures. {\em Adv. Math.} {\bf 211} (2007), no. 2, 726-765.
\bibitem{cal2}A. C\u{a}ld\u{a}raru, The Mukai pairing. II. The Hochschild-Kostant-Rosenberg isomorphism. {\em Adv. Math.} {\bf 194} (2005), no. 1, 34-66. 
\bibitem{cav}G. Cavalcanti, {\em New aspects of the $dd^c$-lemma}. D.Phil. thesis, Oxford University, (2004). math.DG/0501406.
\bibitem{cour}T. J. Courant, Dirac manifolds. {\em Trans. Amer. Math. Soc.} {\bf 319} (1990), no. 2, 631-661.
\bibitem{dgms}P. Deligne, P. Griffiths, J. Morgan, D. Sullivan, Real homotopy theory of K\"ahler manifolds. {\em Invent. Math.} {\bf 29} (1975), no. 3, 245–274.
\bibitem{got}R. Goto, On deformations of generalized Calabi-Yau, hyperK\"ahler, $G_2$ and Spin$(7)$ structures I, arXiv:math/0512211v1 (2005).
\bibitem{gr1}P. Griffiths, Periods of integrals on algebraic manifolds. I. Construction and properties of the modular varieties. {\em Amer. J. Math.} {\bf 90} (1968), 568–626.
\bibitem{gr2}P. Griffiths, Periods of integrals on algebraic manifolds. II. Local study of the period mapping. {\em Amer. J. Math.} {\bf 90} (1968), 805–865. 
%\bibitem{gr3}P. Griffiths, Periods of integrals on algebraic manifolds. III. Some global differential-geometric properties of the period mapping. {\em Inst. Hautes \'{E}tudes Sci. Publ. Math.} No. {\bf 38} (1970) 125–180.
\bibitem{grha}P. Griffiths, J. Harris, Principles of algebraic geometry. Pure and Applied Mathematics. Wiley-Interscience, New York, (1978), 813 pp.
\bibitem{gual}M. Gualtieri. {\em Generalized complex geometry}. D.Phil. thesis, Oxford University, (2003). math.DG/0401221.
\bibitem{gualhod}M. Gualtieri, Generalized geometry and the Hodge decomposition, arXiv:math/0409093v1 (2004).
\bibitem{gualgc}M. Gualtieri, Generalized complex geometry. {\em Ann. of Math.} (2) {\bf 174} (2011), no. 1, 75-123.
\bibitem{hit}N. Hitchin, Generalized Calabi-Yau manifolds. {\em Q. J. Math.} {\bf 54}(3), (2003), 281-308.
\bibitem{kapli}A. Kapustin, Y. Li, Topological sigma-models with H-flux and twisted generalized complex manifolds. {\em Adv. Theor. Math. Phys.} {\bf 11} (2007), no. 2, 261–290.
\bibitem{kod}K. Kodaira, Complex manifolds and deformation of complex structures. Grundlehren der Mathematischen Wissenschaften, 283. Springer-Verlag, New York, (1986), 465 pp.
\bibitem{lwx}Z.-J. Liu, A. Weinstein, P. Xu, Manin triples for Lie bialgebroids. {\em J. Differential Geom.} {\bf 45} (1997), no. 3, 547-574.
\bibitem{mac}K. Mackenzie, Lie groupoids and Lie algebroids in differential geometry. London Mathematical Society Lecture Note Series, 124. Cambridge University Press, Cambridge, (1987) 327 pp.
\bibitem{mat}O. Mathieu, Harmonic cohomology classes of symplectic manifolds. {\em Comment. Math. Helv.} {\bf 70} (1995), no. 1, 1-9.
\bibitem{mer}S. A. Merkulov, Formality of canonical symplectic complexes and Frobenius manifolds. {\em Internat. Math. Res. Notices} (1998), no. 14, 727–733.
\bibitem{mor}D. Morrison, Mathematical aspects of mirror symmetry. Complex algebraic geometry, 265–327, IAS/Park City Math. Ser., 3, Amer. Math. Soc., Providence, RI, (1997).
\bibitem{roy}D. Roytenberg, {\em Courant algebroids, derived brackets and even symplectic supermanifolds}. PhD thesis, Berkeley (1993). arXiv:math/9910078v1.
\bibitem{sev}P. \v{S}evera, letters to Alan Weinstein, http://sophia.dtp.fmph.uniba.sk/$\sim$severa/letters/.
\bibitem{sw}P. \v{S}evera, A. Weinstein, Poisson geometry with a 3-form background. Noncommutative geometry and string theory, {\em Progr. Theoret. Phys. Suppl.} No. {\bf 144} (2001), 145-154.
\bibitem{yan}D. Yan, Hodge structure on symplectic manifolds. {\em Adv. Math.} {\bf 120} (1996), no. 1, 143-154.
\end{thebibliography}

\end{document}